\documentclass[12pt]{article}
\usepackage{fullpage,amsthm,amssymb,amsmath}
\usepackage{graphicx,algorithmic,algorithm,amsfonts}
\usepackage{enumerate,color}
\usepackage{tikz}

\newtheorem{thm}{Theorem}[section]
\newtheorem*{thm:bal2}{Theorem~\ref{thmm:bal2}}
\newtheorem*{thm:unbal2}{Theorem~\ref{thmm:unbal2}}
\newtheorem*{thm:unbal3}{Theorem~\ref{thmm:unbal3}}
\newtheorem{lem}[thm]{Lemma}
\newtheorem{claim}[thm]{Claim}
\newtheorem{problem}[thm]{Problem}

\makeatletter
\makeatother
%

\newcommand\thmbaltwo{
A set $S$ of cycles is an inclusion-wise minimal cycle obstruction set of balanced $2$-partitionable planar graphs if and only if $S=\{C_4\}$ or $S$ is the set of all odd cycles.
}
\newcommand\thmunbaltwo{
A set $S$ of cycles is an inclusion-wise minimal cycle obstruction set of unbalanced $2$-partitionable planar graphs if and only if $S=\{C_3,C_4,C_6\}$ or $S$ is the set of all odd cycles.
}
\newcommand\thmunbalthree{
A set $S$ of cycles is an inclusion-wise minimal cycle obstruction set of unbalanced $3$-partitionable planar graphs if and only if $S=\{C_3\}$ or $S=\{C_4\}$.
}

\begin{document}

\title{Characterization of cycle obstruction sets for improper coloring planar graphs}

\author{
Ilkyoo Choi\thanks{
Supported by the National Research Foundation of Korea (NRF) grant funded by the Korea government (MSIP) (NRF-2015R1C1A1A02036398).
This work was supported by Hankuk University of Foreign Studies Research Fund. 
Department of Mathematics, Hankuk University of Foreign Studies, Yongin-si, Gyeonggi-do, Republic of Korea
\texttt{ilkyoo@hufs.ac.kr}
}
\and
Chun-Hung Liu\thanks{
Partially supported by the National Science Foundation under Grant No. DMS-1664593.
Department of Mathematics, Princeton University, Princeton, NJ, USA.
\texttt{chliu@math.princeton.edu}
}
\and
Sang-il Oum\thanks{
This work was supported by the National Research Foundation of Korea (NRF) grant funded by the Korea government (MSIT) (No. NRF-2017R1A2B4005020).
Department of Mathematical Sciences, KAIST, Daejeon, Republic of Korea.
School of Mathematics, KIAS, Seoul, Republic of Korea. 
\texttt{sangil@kaist.edu}
}
}

\date\today

\maketitle

\begin{abstract}
For nonnegative integers $k, d_1, \ldots, d_k$, a graph is $(d_1, \ldots, d_k)$-colorable if its vertex set can be partitioned into $k$ parts so that the $i$th part induces a graph with maximum degree at most $d_i$ for all $i\in\{1, \ldots, k\}$. 
A class $\mathcal C$ of graphs is {\it balanced $k$-partitionable} and {\it unbalanced $k$-partitionable} if there exists a nonnegative integer $D$ such that all graphs in $\mathcal C$ are $(D, \ldots, D)$-colorable and $(0, \ldots, 0, D)$-colorable, respectively, where the tuple has length $k$. 

A set $X$ of cycles is a {\it cycle obstruction set} of a class $\mathcal C$ of planar graphs if every planar graph containing none of the cycles in $X$ as a subgraph belongs to $\mathcal C$.
This paper characterizes all cycle obstruction sets of planar graphs to be balanced $k$-partitionable and unbalanced $k$-partitionable for all $k$; 
namely, we identify all inclusion-wise minimal cycle obstruction sets for all $k$. 
\end{abstract}

\section{Introduction}

All graphs in this paper are finite and simple, which means no loops and no parallel edges. Let $C_k$ denote a $k$-cycle, which is a cycle of length $k$.
A set $X$ of cycles is a {\it cycle obstruction set} of a class $\mathcal C$ of planar graphs if every planar graph containing none of the cycles in $X$ as a subgraph belongs to $\mathcal C$.

A graph is {\it $k$-colorable} if its vertex set can be partitioned into $k$ color classes so that each color class is an independent set.
The celebrated Four Color Theorem~\cite{1977ApHa,1977ApHaKo} (later reproved in~\cite{1997RoSaSeTh}) states that every planar graph is $4$-colorable. 
Since there are planar graphs that are not $3$-colorable, finding sufficient conditions for a planar graph to be $3$-colorable has been an active area of research; many of these conditions can be translated into the language of obstruction sets. 
Perhaps the most well-known result is the following theorem, known as Gr\"otzsch's Theorem~\cite{1959Gr}:

\begin{thm}[Gr\"otzsch~\cite{1959Gr}]
Planar graphs with no $3$-cycles are $3$-colorable.
\end{thm}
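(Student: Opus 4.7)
The plan is to argue by contradiction: let $G$ be a triangle-free planar graph of minimum order that is not $3$-colorable, and derive a contradiction from a combination of reducibility and discharging.

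First I would establish basic structural properties. By minimality $G$ is $2$-connected, and the minimum degree satisfies $\delta(G)\ge 3$: any vertex of degree at most $2$ could be removed, a $3$-coloring of the resulting smaller triangle-free planar graph obtained from minimality, and then extended back to the removed vertex. Since $G$ is triangle-free, every face of a plane embedding has length at least $4$, so Euler's formula gives $|E(G)|\le 2|V(G)|-4$, already forcing faces of length exactly $4$ to be numerous when combined with $\delta(G)\ge 3$.

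Next comes the discharging argument. Assign each vertex $v$ the initial charge $d(v)-4$ and each face $f$ the initial charge $\ell(f)-4$, where $\ell(f)$ denotes its length; by Euler's formula the total charge is $-8$. The goal is to design local redistribution rules so that every vertex and face ends with non-negative charge, contradicting the negative total sum. To make this possible, one must first exclude certain reducible configurations in $G$, typically collections of $4$-faces glued along edges whose boundary vertices all have degree $3$. Each such configuration is ruled out by showing that a $3$-coloring of a slightly smaller triangle-free planar graph — obtained by deleting a carefully chosen vertex set or identifying two vertices — extends to the configuration, contradicting minimality.

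The main obstacle will be the reducibility analysis: producing a list of forbidden subconfigurations rich enough that the discharging rules can be tuned to succeed, while also verifying for each configuration that identifications preserve both planarity and triangle-freeness. This is precisely what makes the classical proof of Gr\"otzsch's theorem delicate. A cleaner alternative, which I would likely pursue in a write-up, is Thomassen's inductive approach: prove a stronger statement asserting that any precoloring of a short facial cycle in a triangle-free planar graph extends to a proper $3$-coloring (possibly in a list-coloring formulation), where the strengthened inductive hypothesis absorbs the reducibility step and makes the case analysis substantially more tractable than a direct discharging attack.
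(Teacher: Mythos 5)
There is nothing in the paper to compare against here: Gr\"otzsch's Theorem is quoted as a known result with a citation to~\cite{1959Gr}, and the paper supplies no proof of it. Judged on its own, your submission is an outline of a strategy rather than a proof, and the gap is exactly where you say ``the main obstacle will be'': you never exhibit the set of reducible configurations, never state the discharging rules, and never verify a single extension argument. The easy parts you do carry out ($2$-connectedness, $\delta(G)\ge 3$, the edge bound $|E(G)|\le 2|V(G)|-4$) are correct but do not by themselves produce any tension --- with girth $4$ and minimum degree $3$ the inequalities $\tfrac{3}{2}|V(G)|\le |E(G)|\le 2|V(G)|-4$ are perfectly consistent, so no contradiction is in sight until the $4$-faces are dealt with. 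That step is the entire content of the theorem: a $4$-face contributes charge $0$ and a $3$-vertex contributes charge $-1$, so one must either show $4$-faces are reducible (the natural move of identifying two opposite vertices of a $4$-face can create a triangle or a multiple edge, and ruling this out requires a careful case analysis of short separating cycles) or find some other source of positive charge. None of this is done.

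The fallback you mention --- Thomassen's precoloring-extension induction --- is a legitimate route, but as written it is only a pointer to the literature. The strengthened statement is delicate: it is \emph{not} true that every proper precoloring of a facial cycle of length at most $6$ in a triangle-free planar graph extends to a $3$-coloring (there are genuine exceptional configurations for precolored $6$-cycles that must be identified and excluded in the induction hypothesis), and getting that hypothesis exactly right is where the work lives. So the proposal correctly names two known proof architectures but supplies the content of neither; as a self-contained argument it does not establish the theorem.
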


In the language of obstruction sets, Gr\"otzsch's Theorem states that $\{C_3\}$ is an obstruction set of $3$-colorable planar graphs. 
There is also a vast literature regarding forbidding various cycle lengths to guarantee a planar graph to be $3$-colorable;
see Table~\ref{tab:history} for a summary of some of these results. 


\begin{table}[h]\label{tab:history}
\begin{center}
\begin{tabular}{c||c||c|c|c|c|c|c|c||l}
year & reference  & $ 3$  &  $ 4$  &  $ 5$  &  $ 6$  &  $ 7$  &  $ 8$  &  $ 9$ & authors   \\
\hline \hline
1959 &\cite{1959Gr} & $\times$ &    &    &    &    &      &  &  Gr\"otzsch \\\hline
2005 & \cite{2005ZhWu} &    & $\times$ & $\times$ & $\times$ &    & & $\times$  & Zhang--Wu \\\hline
2006 & \cite{2006Xu} && $\times$ & $\times$ &    & $\times$ &    &     & Xu \\\hline
2010 & \cite{2010WaLuCh} && $\times$ & $\times$ &    &    & $\times$ & $\times$ &  Wang--Lu--Chen \\\hline
2007 & \cite{2007ChRaWa} &    & $\times$ &    & $\times$ & $\times$ &    & $\times$  &Chen--Raspaud--Wang \\\hline
2007 & \cite{2007WaCh} & &$\times$ &    & $\times$ &    & $\times$ &    &  Wang--Chen  \\\hline
2011 & \cite{2011WaWuSh} &    & $\times$ &    &    & $\times$ & $\times$ & $\times$ & Wang--Wu--Shen \\
\end{tabular}
\end{center}
\caption{Forbidding various cycle lengths to guarantee 3-colorability of planar graphs}
\end{table}

Each result in the aforementioned theorem reveals a new obstruction set of $3$-colorable planar graphs. 
The interest in forbidding various cycle lengths stems from Steinberg's Conjecture~\cite{1993St}, which states that planar graphs with neither $4$-cycles nor $5$-cycles are $3$-colorable. 
There was almost no progress after the conjecture was first proposed in 1976, but many partial results were produced after 1991, which is when Erd\H{o}s~\cite{1993St} proposed the following approach towards Steinberg's Conjecture: find the minimum $k$ such that planar graphs with no cycle lengths in $\{4, \ldots, k\}$ are $3$-colorable. 
After 40 years of effort by the coloring community to try to prove Steinberg's Conjecture, only recently it was disproved via a clever construction by Cohen-Addad et al.~\cite{2017CoHeKrLiSa}.
Yet, the question of whether planar graphs with no cycle lengths in $\{4, 5, 6\}$ are $3$-colorable or not remains open.

Recently, the following relaxation of proper coloring, also known as {\it improper coloring}, has attracted much attention: for nonnegative integers $k, d_1, \ldots, d_k$, a graph is {\it $(d_1, \ldots, d_k)$-colorable} if its vertex set can be partitioned into $k$ color classes $V_1, \ldots, V_k$ so that $V_i$ induces a graph with maximum degree at most $d_i$ for all $i\in\{1, \ldots, k\}$. 
This relaxation allows some prescribed defects in each color class, where defects are measured in terms of the maximum degree of the graph induced by the vertices of a color class. 
We say a class $\mathcal C$ of graphs is {\it balanced $k$-partitionable} and {\it unbalanced $k$-partitionable} if there exists a nonnegative integer $D$ such that all graphs in $\mathcal C$ are $(D, \ldots, D)$-colorable and $(0, \ldots, 0, D)$-colorable, respectively, where the tuple has length $k$. 

There is a vast literature in improper coloring planar graphs. 
By the Four Color Theorem, planar graphs are $4$-colorable, which is equivalent to $(0, 0, 0, 0)$-colorable, and Cowen et al.\ \cite{1986CoCoWo} proved that planar graphs are $(2, 2, 2)$-colorable.
This is best possible in the sense that for any given nonnegative integers $d_1$ and $d_2$, there exists a planar graph that is not $(1, d_1, d_2)$-colorable; for one such construction see~\cite{unpub_ChEs}. 
Therefore, the question of partitioning planar graphs with no extra conditions into at least three subgraphs of bounded maximum degrees is completely solved. 

It is often useful to consider girth conditions along with planarity to obtain positive results. 
Regarding partitioning planar graphs into two parts, for any given nonnegative integers $d_1$ and $d_2$, a planar graph with girth $4$ that is not $(d_1, d_2)$-colorable is constructed in~\cite{2015MoOc}. 
Yet, Choi et al.~\cite{2016ChChJeSu}, Borodin and Kostochka~\cite{2014BoKo}, Choi and Raspaud~\cite{2015ChRa}, and \v Skrekovski~\cite{2000Sk} proved that planar graphs with girth at least $5$ are $(1, 10)$-, $(2, 6)$-, $(3, 5)$-, and $(4, 4)$-colorable, respectively. 
Also, given a nonnegative integer $d$, a planar graph with girth $6$ that is not $(0, d)$-colorable is constructed in~\cite{2010BoIvMoOcRa}.
On the other hand, it is known that every planar graph with girth at least $7$ is $(0, 4)$-colorable~\cite{2014BoKo}. 
For other papers regarding improper coloring sparse (not necessarily planar) graphs, see~\cite{2013BoKoYa,2013EsMoOcPi,2006HaSe,2014KiKoZh,2016KiKoZh}.

The previous paragraph concerns girth conditions enforced on planar graphs to obtain positive results. 
Instead of forbidding all short cycles, we are interested in finding the minimal sets of obstacles in partitioning planar graphs into parts with bounded maximum degrees. 
We succeed in identifying which cycle lengths are essential obstructions when it comes to partitioning planar graphs in a balanced and unbalanced way.
In other words, this paper characterizes all cycle obstruction sets of balanced $k$-partitionable and unbalanced $k$-partitionable planar graphs for all $k$; namely, we identify all the inclusion-wise minimal cycle obstruction sets. 

By the Four Color Theorem, the empty set is the (only) inclusion-wise minimal cycle obstruction set of both balanced $k$-partitionable and unbalanced $k$-partitionable planar graphs when $k\geq 4$.
The empty set is also the (only) inclusion-wise minimal cycle obstruction set of balanced $3$-partitionable planar graphs, since Cowen et al.\ \cite{1986CoCoWo} proved that planar graphs are $(2, 2, 2)$-colorable.
For the remaining cases, we characterize the inclusion-wise minimal obstruction sets, and for each case there are exactly two. 
Our main results are the following three theorems:

\begin{thm:bal2}
\thmbaltwo
\end{thm:bal2}

\begin{thm:unbal2}
\thmunbaltwo
\end{thm:unbal2}

\begin{thm:unbal3}
\thmunbalthree
\end{thm:unbal3}

Theorem~\ref{thmm:bal2} and Theorem~\ref{thmm:unbal2} state that for planar graphs to be balanced $2$-partitionable and unbalanced $2$-partitionable, respectively, there is only one inclusion-wise minimal cycle obstruction set other than the set of all odd cycles.
Since forbidding all odd cycles makes the graph bipartite, and thus $2$-colorable, which is equivalent to $(0, 0)$-colorable, the minimal cycle obstructions for planar graphs to be balanced $2$-partitionable and unbalanced $2$-partitionable is a $4$-cycle and all of $3$-, $4$-, $6$-cycles, respectively.
Note that previous results by \v Skrekovski~\cite{2000Sk} and Borodin and Kostochka~\cite{2014BoKo} imply that planar graphs are balanced $2$-partitionable and unbalanced $2$-partitionable when the forbidden cycle lengths are $3, 4$ and $3, 4, 5, 6$, respectively. 

Theorem~\ref{thmm:unbal3} states that other than a $3$-cycle, there is only one other inclusion-wise minimal cycle obstruction set of unbalanced $3$-partitionable planar graphs. 
Since Gr\"otzsch's Theorem says that forbidding a $3$-cycle in planar graphs guarantees that it is $3$-colorable, which is equivalent to $(0, 0, 0)$-colorable, the minimal cycle obstruction for non-3-colorable planar graphs to be unbalanced $3$-partitionable is a $4$-cycle. 

Note that for both balanced $1$-partitioning and unbalanced $1$-partitioning, cycle obstruction sets simply do not exist because of planar graphs with arbitrarily large maximum degree. 
See Table~\ref{tab:results} for a complete list of cycle obstruction sets of both balanced $k$-partitionable and unbalanced $k$-partitionable planar graphs. 

\begin{table}[h]
\begin{center}
\renewcommand{\arraystretch}{1.5}
\begin{tabular}{c||c|c}
$ k$ & balanced & unbalanced \\ \hline \hline 
${4^+}$-{partitionable} & {$\emptyset$} & {$\emptyset$} \\ \hline
$ 3$-{partitionable}     & {$\emptyset$} & $\{ C_3\}, \{ C_4\}$\\ \hline
$ 2$-{partitionable}     
& $\{C_{2i+1}:i\geq 1\}$, $\{ C_4\}$ 
& $\{C_{2i+1}:i\geq 1\}$, $\{ C_3,  C_4,  C_6\}$\\
\hline
$ 1$-{partitionable}     & does not exist! & does not exist!\\
\end{tabular}
\end{center}
\caption{Characterization of inclusion-wise minimal cycle obstruction sets}\label{tab:results}
\end{table}

In Section~\ref{sec:bal2}, Section~\ref{sec:unbal2}, and Section~\ref{sec:unbal3}, we prove Theorem~\ref{thmm:bal2}, Theorem~\ref{thmm:unbal2}, and Theorem~\ref{thmm:unbal3}, respectively. 
The constants in all of our main results are probably improvable with some effort. 
Yet, we focused on simplifying the proofs and using the minimum number of reducible configurations and basic discharging rules in order to improve the readability of the paper. 
We end this section by posing three questions and some definitions that will be used in the next sections. 

\begin{problem}
What is the minimum $D$ such that every planar graph with no $4$-cycles is $(D, D)$-colorable?
\end{problem}

\begin{problem}
What is the minimum $D$ such that every planar graph with no $3$-, $4$-, $6$-cycles is $(0, D)$-colorable?
\end{problem}

\begin{problem}
What is the minimum $D$ such that every planar graph with no $4$-cycles is $(0, 0, D)$-colorable?
\end{problem}


The {\it degree} of a vertex $v$, denoted by $d(v)$, is the number of edges incident with it.
A {\it $k$-vertex}, {\it $k^+$-vertex}, and {\it $k^-$-vertex} is a vertex of degree exactly $k$, at least $k$, and at most $k$, respectively. 
Given any embedding of a connected planar graph $G$ on at least two vertices on the plane, for every face $f$, we say that a boundary walk $W_f$ of $f$ is {\it canonical} if it traces the edges incident with $f$ according to one of the two obvious cyclic orderings of those edges.
The {\it degree} of a face $f$, denoted by $d(f)$, is the length of $W_f$; note that cut edges are counted twice.
A {\it $k$-face}, {\it $k^+$-face}, and {\it $k^-$-face} is a face of degree exactly $k$, at least $k$, and at most $k$, respectively. 
For each face $f$ and each vertex $v$ of $G$, we define $k_{f,v}$ to be the number of triples $(e,v,e')$ such that $e,e' \in E(G)$ and $eve'$ is a subwalk of $W_f$.
It is well-known that the degree of $f$ and $k_{f,v}$ is independent of the choice of $W_f$.
Clearly, the degree of $f$ equals $\sum_{v \in V(G)} k_{f,v}$.


\section{Balanced $2$-partitions}\label{sec:bal2}

In this section, we prove the following theorem:

\begin{thm}\label{thmm:bal2}
\thmbaltwo
\end{thm}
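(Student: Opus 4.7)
The theorem asserts two facts: both $\{C_4\}$ and the set of all odd cycles are inclusion-wise minimal cycle obstruction sets, and no other such minimal set exists. The main technical content is the claim that $\{C_4\}$ is a cycle obstruction set, i.e., there is a universal constant $D$ such that every $C_4$-free planar graph is $(D,D)$-colorable. The other sufficiency is immediate: a planar graph without odd cycles is bipartite and hence $(0,0)$-colorable.

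\textbf{Proving $\{C_4\}$ is an obstruction set.} I would argue by minimum counterexample and discharging. Let $G$ be a smallest (in $|V(G)|+|E(G)|$) $C_4$-free planar graph that fails to be $(D,D)$-colorable, for a suitable constant $D$ to be chosen. By standard induction, $G$ is $2$-connected with minimum degree above some threshold depending on $D$. The absence of $C_4$'s enforces local sparsity; in particular, no two $3$-faces share an edge, since two triangles sharing an edge would form a $C_4$. I would then enumerate a short list of \emph{reducible configurations}---small local structures (a low-degree vertex with mostly low-degree neighbors; a $3$-face surrounded by small-degree vertices; etc.) whose presence in $G$ would allow deleting a small piece of $G$, $(D,D)$-coloring the remainder by minimality, and extending the coloring to $G$, contradicting minimality. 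Then run a discharging argument: assign initial charge $c(v)=d(v)-4$ to each vertex $v$ and $c(f)=d(f)-4$ to each face $f$, so by Euler's formula the total charge is $-8$; design local rules moving charge from large faces and high-degree vertices to $3$-faces and low-degree vertices, and verify that in the absence of reducible configurations every vertex and face ends nonnegative, contradicting $-8<0$.

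\textbf{Characterizing minimal obstruction sets.} Minimality of $\{C_4\}$ follows from~\cite{2015MoOc}: for every $D$, a girth-$4$ planar graph that is not $(D,D)$-colorable exists, so $\emptyset$ is not an obstruction set. For minimality of the set of all odd cycles, for each $i\geq 1$ I would exhibit a non-balanced-$2$-partitionable planar graph whose only odd cycles have length $2i+1$, for instance by inserting $(2i+1)$-cycle gadgets into a bipartite non-balanced-$2$-partitionable planar graph, or by subdivision-based operations that shift odd-cycle lengths in a controlled way. To rule out further minimal sets, let $S$ be an inclusion-wise minimal cycle obstruction set. If $C_4\in S$, then $\{C_4\}\subseteq S$ is already an obstruction set, forcing $S=\{C_4\}$. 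If $C_4\notin S$ and some $C_{2i+1}\notin S$, I would exhibit a non-balanced-$2$-partitionable planar graph whose cycle lengths lie in $\{4,2i+1\}$, contradicting that $S$ is an obstruction set; hence $S$ contains every odd cycle and, by minimality together with the fact that the set of all odd cycles is already an obstruction set, $S$ equals the set of all odd cycles.

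\textbf{Main obstacle.} The principal difficulty is the discharging proof. Because $C_3$ is allowed, $G$ may be locally triangle-dense, and the bookkeeping around $3$-faces and their incident vertices is delicate. Identifying a minimal-yet-sufficient list of reducible configurations together with a matching discharging scheme, while keeping $D$ large enough that many configurations become reducible but controlled enough that the analysis stays clean, is the crux of the proof; the constructions needed for the minimality and characterization steps are comparatively routine.
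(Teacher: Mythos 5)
Your overall architecture matches the paper's: necessity of ``$C_4$ or all odd cycles'' via planar gadgets, sufficiency of the odd-cycle set via bipartiteness, sufficiency of $\{C_4\}$ via a minimal counterexample and discharging, and the same logic for extracting the two minimal sets. The characterization step is sound, and the witnesses you ask for (planar graphs whose cycle lengths lie in $\{4,2i+1\}$ and which are not $(D,D)$-colorable for any $D$) are exactly what the paper builds with its $H(D,l)$ gadgets, obtained by chaining copies of the ``$2D+1$ internally disjoint paths of length $2$'' gadget around an odd cycle and gluing $D+1$ copies at a cut-vertex. (One phrase in your sketch is self-contradictory --- a ``bipartite non-balanced-$2$-partitionable planar graph'' cannot exist, since bipartite graphs are $(0,0)$-colorable --- but your later, correct formulation supersedes it.)

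The genuine gap is that the heart of the theorem, namely that some fixed $D$ makes every $C_4$-free planar graph $(D,D)$-colorable, is announced but not proved: you exhibit no reducible configurations, no discharging rules, and no value of $D$, and you explicitly defer exactly this as ``the crux.'' Moreover, the one structural claim you do assert --- that a minimal counterexample has minimum degree above a threshold depending on $D$ ``by standard induction'' --- is false in this setting: a $2$-vertex whose two neighbors are saturated in different colors is not reducible by deletion and recoloring, and indeed the paper's minimal counterexample (chosen to minimize the number of $3^+$-vertices, then the number of edges) retains $2$-vertices and must handle them through ``terrible'' $3$-faces incident with a $2$-vertex. The paper's actual proof establishes $(5,5)$-colorability using charges $\mu(v)=2d(v)-6$ and $\mu(f)=d(f)-6$, three reducible configurations (every edge has an endpoint of degree at least $7$; no $3$-vertices, eliminated via a triangle-of-paths gadget replacement; an upper bound of $\min\{\lfloor d(v)/2\rfloor,\, d(v)-6\}$ on the number of terrible $3$-faces at a $7^+$-vertex), and a short rule set verified by a weighting argument. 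None of this, nor any substitute for it, appears in your proposal, so the sufficiency of $\{C_4\}$ --- the main content of the theorem --- remains unestablished.
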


We will first show a necessary condition for cycle obstruction sets, and then show that it is sufficient afterwards. 

\begin{lem}\label{lem:bal2:nec}
If a set $S$ of cycles is an obstruction set of balanced $2$-partitionable planar graphs, then either $C_4 \in S$ or $S$ contains all odd cycles.
\end{lem}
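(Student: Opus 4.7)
The plan is to prove the contrapositive. Assume $C_4 \notin S$ and that some odd cycle $C_\ell$ is not in $S$, and fix such an $\ell$. I want to show that the class $\mathcal G_S$ of planar graphs avoiding every cycle of $S$ is not balanced $2$-partitionable; equivalently, for every nonnegative integer $D$ I will exhibit a planar graph $H_D \in \mathcal G_S$ that is not $(D,D)$-colorable.

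The unifying idea is to build $H_D$ so that every cycle of $H_D$ has length $4$ or $\ell$. This guarantees $H_D \in \mathcal G_S$ regardless of which other cycles happen to lie in $S$, so a single construction handles all admissible $S$. The ingredient is a small planar \emph{forcing gadget} $F$, with cycles only of length $4$ or $\ell$ and one distinguished terminal vertex, whose role is to propagate color information. When $\ell = 3$ a natural candidate is a tree of $K_4$'s glued at cut-vertices, whose cycles are precisely the $C_3$'s and $C_4$'s sitting inside each $K_4$. When $\ell \geq 5$ one instead uses a pair of $\ell$-cycles sharing a common edge or short path, carefully arranged so that the only extra cycles produced are $4$-cycles. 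Given $F$, the graph $H_D$ is obtained by taking a hub vertex $v$ and attaching sufficiently many (at least $2D+2$) disjoint copies of $F$ at $v$; each copy hangs in its own face, so planarity and the cycle spectrum $\{4,\ell\}$ are preserved.

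The delicate part is verifying that $F$ really is a forcing gadget: that in any $(D,D)$-coloring of $H_D$, enough of the pendant copies of $F$ force their terminal into the same class as $v$, yielding a monochromatic subgraph of degree exceeding $D$ at $v$. I would prove this by induction on $D$, iterating gadgets in layers so that the "bad color" obligation is amplified from the outside of $F$ toward its terminal, and tuning the number of hub copies so that a pigeonhole argument produces $D+1$ same-color neighbors of $v$. The main obstacle throughout is keeping the forced behaviour compatible with the cycle restrictions; I expect to spend most of the effort ruling out spurious cycles that could arise when gadgets are glued together, and proving the propagation inequality precisely. Once the gadget's coloring behaviour is established, the non-$(D,D)$-colorability of $H_D$ is immediate, and planarity together with membership in $\mathcal G_S$ follows directly from the construction.
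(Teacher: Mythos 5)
Your high-level architecture (keep every cycle length in $\{4,\ell\}$, hang many gadgets on a hub, apply pigeonhole at the hub) matches the paper's, but the heart of the argument is the forcing gadget, and the candidates you propose do not force anything. For a \emph{balanced} $(D,D)$-coloring, any graph of maximum degree at most $D$ is $(D,D)$-colorable by giving every vertex the same color; so a gadget whose vertices have bounded degree, such as two $\ell$-cycles sharing a path (a theta graph of maximum degree $3$), imposes no constraint once $D\geq 3$. Your $\ell=3$ candidate fails as well: a tree of $K_4$'s glued at cut-vertices is $(2,2)$-colorable (root the block tree and, in each block, give the three non-root vertices the color opposite to that of the root cut-vertex; every vertex then has at most two like-colored neighbors), hence $(D,D)$-colorable for all $D\geq 2$ no matter how many copies you attach to the hub. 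Amplifying ``in layers'' cannot rescue this, because nothing is being propagated in the first place.

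The missing idea is the gadget the paper calls $H_2(D;x,y)$: take $2D+1$ internally disjoint $x,y$-paths of length $2$. Its only cycles are $4$-cycles, and in any $(D,D)$-coloring the endpoints $x$ and $y$ must receive the \emph{same} color, since otherwise each of the $2D+1$ internal vertices is a like-colored neighbor of $x$ or of $y$, and by pigeonhole one of $x,y$ acquires $D+1$ like-colored neighbors. Chaining $l$ of these gadgets around an $(l+1)$-cycle while leaving one edge intact forces that edge to be monochromatic and keeps all cycle lengths in $\{4,2l+1\}$; identifying the distinguished vertices of $D+1$ disjoint copies then yields a planar graph whose hub has $D+1$ like-colored neighbors, so it is not $(D,D)$-colorable. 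Without a forcing mechanism of this kind, built from vertices of degree growing with $D$, your construction cannot get off the ground.
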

\begin{proof}
Given a nonnegative integer $D$ and two vertices $x$ and $y$, let $H_2(D; x, y)$ be the graph consisting of $2D+1$ internally disjoint $x, y$-paths of length $2$. 
For a positive integer $l$ and a vertex $v_1$, let $H_1(D, l; v_1)$ be the graph obtained from a cycle with vertices $v_1, \ldots, v_{l+1}$ and replacing each edge $v_iv_{i+1}$ with a copy of $H_2(D; v_i, v_{i+1})$ where $i\in\{1, \ldots, l\}$. 
Finally, let $H(D, l)$ be the graph obtained from $D+1$ pairwise disjoint copies of $H_1(D, l; v^j_1)$ and identifying all of $v^j_1$ for $j\in\{1, \ldots, D+1\}$. 

Now in any $(D, D)$-coloring of $H_2(D; x, y)$, it is easy to see that $x$ and $y$ must receive the same color. 
This implies that the cut-vertex of $H(D, l)$ has $D+1$ neighbors of the same color, which shows that $H(D, l)$ is not $(D, D)$-colorable. 
It is not hard to see that the cycles in $H(D, l)$ have length either $4$ or $2l+1$. 
Therefore the obstruction set of balanced $2$-partitionable planar graphs contains either $C_4$ or all odd cycles. 
See Figure~\ref{fig:bal2-tight} for an illustration of $H_2(D; x, y)$ and $H(D, 2)$.
\end{proof}

\begin{figure}[ht]
	\begin{center}
  \includegraphics[scale=1]{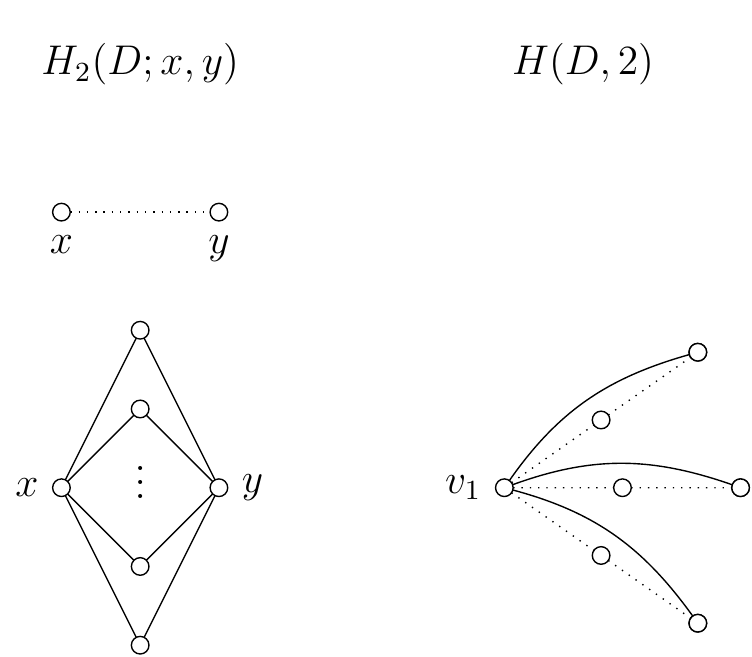}
  \caption{Graphs that are not $(D, D)$-colorable}
  \label{fig:bal2-tight}
	\end{center}
\end{figure}

If a planar graph does not contain any odd cycles, then it is bipartite, and thus it is $(0, 0)$-colorable, and hence it is balanced $2$-partitionable. 
The remaining of this section proves that planar graphs with no $4$-cycles are balanced $2$-partitionable. 
Note that Lemma~\ref{lem:bal2:nec} and Theorem~\ref{thm:bal2} imply Theorem~\ref{thmm:bal2}.

\begin{thm}\label{thm:bal2}
A planar graph with no $4$-cycles is $(5, 5)$-colorable. 
\end{thm}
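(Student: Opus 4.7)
The plan is to use the standard minimum counterexample method together with discharging. Suppose for contradiction that $G$ is a planar graph with no $4$-cycle that is not $(5,5)$-colorable and has the fewest vertices among all such graphs; fix a plane embedding.

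First I would establish reducible configurations. For a vertex $v$ of sufficiently small degree, a $(5,5)$-coloring of $G-v$ exists by minimality, and I would extend it to $v$. The basic idea is to place $v$ in the color class containing at most $\lfloor d(v)/2 \rfloor$ of its neighbors; when $d(v)$ is small this is at most $5$. The only obstruction is that a neighbor in $v$'s new class may already have color-degree $5$ and become overloaded when $v$ is added; handling such a neighbor by flipping its color yields a small Kempe-style recoloring which, when set up carefully, either succeeds or forces a very restricted local pattern. In this way I hope to deduce that $\delta(G)$ is at least some fixed constant, together possibly with a short list of forbidden adjacency patterns between low-degree vertices and short faces.

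Next I would set up discharging. Assign initial charge $d(v)-4$ to each vertex $v$ and $d(f)-4$ to each face $f$; by Euler's formula the total is $-8$. Since $G$ has no $4$-cycle, every face has length $3$ or at least $5$, and no two $3$-faces share an edge (two triangles sharing an edge would force a $4$-cycle). Thus the only negatively charged faces are triangles (charge $-1$), and every edge of a triangle lies on a $5^+$-face, making such faces the natural donors. I would design a short list of rules in which each $5^+$-face sends a prescribed amount of charge to each incident $3$-face, and in which high-degree vertices send charge to incident low-degree vertices or triangles as required by the reducibility restrictions. Verifying that every vertex and face ends with nonnegative charge then contradicts the total being $-8$.

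The main obstacle I expect is the reducibility step, not the discharging. The naive extension of a coloring of $G-v$ fails because inserting $v$ can push a neighbor's color-degree above $5$, and local swaps could in principle propagate. Designing the reducible configurations so that each can be resolved by at most a couple of color swaps, while still being unavoidable in the minimum counterexample given the discharging rules, is where the care lies. Given the authors' stated preference for simplicity, I expect a clean minimum-degree bound plus one or two local forbidden patterns will suffice, and the discharging itself will involve only a handful of rules and a routine case analysis on face length and vertex degree.
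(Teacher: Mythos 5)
Your outline matches the paper's strategy only at the generic level (minimal counterexample plus discharging); the step you defer --- ``establish reducible configurations, hopefully a clean minimum degree bound'' --- is where the entire content of the proof lies, and the specific configuration you hope for is not attainable. A $2$-vertex $v$ whose two neighbors receive different colors and are both saturated (each already having five neighbors of its own color) cannot be handled by inserting $v$ and performing a couple of swaps: flipping a saturated high-degree neighbor can create arbitrarily many new conflicts elsewhere. Accordingly, the paper does \emph{not} prove a minimum degree bound; $2$-vertices survive in the minimal counterexample and the discharging is organized around them. The reducible configurations actually used are: (i) every edge has an endpoint of degree at least $7$ (Lemma~\ref{lem:bal2:edge}), so $2$-vertices persist but only between $7^+$-vertices; (ii) there are no $3$-vertices (Lemma~\ref{lem:bal2:3-vx}), proved not by deleting a vertex but by replacing it with a gadget of three paths through new $2$-vertices --- which is why the counterexample must be chosen to minimize the number of $3^+$-vertices rather than the number of vertices, a reduction your ``fewest vertices'' setup would not permit; and (iii) the key quantitative lemma that a $7^+$-vertex $v$ lies on at most $\min\{\lfloor d(v)/2\rfloor, d(v)-6\}$ ``terrible'' $3$-faces ($3$-faces containing a $2$-vertex), whose proof is a genuine counting-and-recoloring argument rather than a local swap.

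Without (iii) the discharging cannot close, in your normalization or the paper's. With charges $d-4$, a $2$-vertex carries $-2$ and a triangle through it carries $-1$, so each terrible triangle together with its $2$-vertex represents a deficit of $3$ to be covered essentially by its two $7^+$-vertices and one adjacent $5^+$-face; a $7$-vertex carries only charge $3$ yet could a priori lie on three such triangles, and no choice of rules balances this until the number of terrible triangles at a $7^+$-vertex is capped by $d(v)-6$. So the gap is not in the discharging bookkeeping you sketch but in the unavoidability analysis you postpone: it requires ideas (the gadget replacement with the altered minimality measure, and the terrible-face bound) that your plan neither contains nor points toward.
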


In the rest of this section, let $G$ be a counterexample to Theorem~\ref{thm:bal2} with the minimum number of $3^+$-vertices, and subject to that choose one with the minimum number of edges. 
Also, fix a plane embedding of $G$. 
It is easy to see that $G$ is connected and has no $1$-vertices.
From now on, given a (partially) $(5, 5)$-colored graph, we will let $a$ and $b$ be the two colors, and we say a vertex with a color is {\it saturated} if it already has five neighbors of the same color.

\subsection{Structural lemmas}

\begin{lem}\label{lem:bal2:edge}
Every edge $xy$ of $G$ has an endpoint with degree at least $7$. 
\end{lem}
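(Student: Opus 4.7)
The plan is to derive a contradiction by producing a $(5,5)$-coloring of $G$. Suppose $xy$ is an edge with $d(x), d(y) \leq 6$, and set $G' := G - xy$. Since $G'$ is planar and $C_4$-free, has at most as many $3^+$-vertices as $G$, and has strictly fewer edges, the minimality of $G$ implies that $G'$ admits a $(5,5)$-coloring $c$ with colors $a$ and $b$. If $c(x) \neq c(y)$, the edge $xy$ imposes no new constraint and $c$ is already a $(5,5)$-coloring of $G$, done. So assume $c(x) = c(y) = a$, and let $n_a, m_a$ denote the color-$a$ neighbor counts of $x, y$ in $G'$, both at most $5$. If $n_a \leq 4$ and $m_a \leq 4$, then adding $xy$ back raises each count only to at most $5$, and $c$ works on $G$.

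The nontrivial case is, by symmetry, $n_a = 5$. Since $n_a \leq d(x) - 1 \leq 5$, this forces $d(x) = 6$ and \emph{every} neighbor of $x$ in $G$ (including $y$) to have color $a$ under $c$. I will then recolor $x$ from $a$ to $b$. Under the new coloring, $x$ has no color-$b$ neighbor, so its own constraint is trivial; each $G$-neighbor $z \neq y$ of $x$ keeps color $a$ but loses $x$ as a color-$a$ neighbor, so its count strictly drops; and $y$ keeps color $a$ with color-$a$ count in $G$ equal to $m_a \leq 5$, since $x$ is now $b$. All other vertices are unaffected. So we obtain a $(5,5)$-coloring of $G$, contradicting the choice of $G$.

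The only conceptual obstacle is a potential cascade: a priori, recoloring $x$ to $b$ could oversaturate some color-$b$ neighbor of $x$, forcing further recoloring and an iterative argument. However, the extremal situation $n_a = 5$ combined with $d(x) \leq 6$ automatically forces \emph{all} neighbors of $x$ to be color $a$, ruling out any color-$b$ neighbor and allowing the recoloring to terminate in one step. This is precisely where the hypothesis $d(x), d(y) \leq 6$ is essential---providing just enough slack that an extremal count on one side pins down the entire neighborhood of the denser endpoint.
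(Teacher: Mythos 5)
Your proof is correct and follows essentially the same route as the paper: delete the edge $xy$, obtain a $(5,5)$-coloring of $G-xy$ by minimality, and observe that a saturated endpoint of color $a$ has all of its neighbors colored $a$ (since its degree is at most $6$), so it can safely be recolored $b$. The only cosmetic difference is that the paper recolors every saturated vertex among $\{x,y\}$ while you recolor just one chosen by symmetry; both variants close the argument.
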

\begin{proof}
Suppose to the contrary that $x$ and $y$ are both $6^-$-vertices. 
Since $G\setminus xy$ is a graph with fewer edges than $G$ and the number of $3^+$-vertices did not increase, there is a $(5, 5)$-coloring $\varphi:V(G)\rightarrow\{a, b\}$ of $G\setminus xy$. 
If $\varphi$ is not a $(5, 5)$-coloring of $G$, then $\varphi(x)=\varphi(y)$, and either $x$ or $y$ is saturated in $G\setminus xy$. 
For each saturated vertex $z$ in $\{x, y\}$, we may recolor it with the color in $\{a, b\}\setminus\{\varphi(z)\}$ since all of its neighbors have color $\varphi(z)$ in $G\setminus xy$.
We end up with a $(5, 5)$-coloring of $G$, which is a contradiction. 
\end{proof}

\begin{lem}\label{lem:bal2:3-vx}
There are no $3$-vertices in $G$. 
\end{lem}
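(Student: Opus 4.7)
The plan is to argue by contradiction: suppose $v$ is a $3$-vertex of $G$ with neighbors $v_1, v_2, v_3$. By Lemma~\ref{lem:bal2:edge}, each $v_i$ has degree at least $7$ in $G$, hence at least $6$ in $G-v$. Because $v$ is itself a $3^+$-vertex, the graph $G-v$ (still planar and still containing no $4$-cycle) has strictly fewer $3^+$-vertices than $G$, so the minimality of $G$ yields a $(5,5)$-coloring $\varphi$ of $G-v$. It then suffices to extend $\varphi$ to $v$.

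Assigning color $c\in\{a,b\}$ to $v$ is valid if and only if no neighbor $v_i$ with $\varphi(v_i)=c$ is already $c$-saturated in $G-v$. Since $v$ has only three neighbors, by pigeonhole two of them share a color under $\varphi$, so up to symmetry $\varphi(v_1)=\varphi(v_2)=a$. If $\varphi(v_3)=a$ as well, then $v$ has no $b$-colored neighbor and setting $v:=b$ extends $\varphi$, contradicting that $G$ is a counterexample. Hence $\varphi(v_3)=b$, and the only obstruction to extending $\varphi$ is the \emph{bad case}: some $v_i\in\{v_1,v_2\}$ is $a$-saturated while $v_3$ is simultaneously $b$-saturated.

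In the bad case I would locally modify $\varphi$ and retry. The natural move is to recolor $v_3$ from $b$ to $a$: since $v_3$ has $d(v_3)-1\ge 6$ non-$v$ neighbors of which exactly $5$ are colored $b$, it already has $d(v_3)-6$ $a$-neighbors, so the flip keeps the defect at $v_3$ at most $5$ provided $d(v_3)\le 11$, and one must further check that no $a$-colored neighbor of $v_3$ is itself $a$-saturated. The no-$4$-cycle hypothesis is essential here: any two of $v_1,v_2,v_3$ share no common neighbor other than $v$, so flipping $v_3$ cannot indirectly disturb $v_1$ or $v_2$ through shared neighbors. If the flip succeeds, all three neighbors of $v$ are now colored $a$, so $v:=b$ extends. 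When the direct flip is blocked, I would instead choose $\varphi$ among all $(5,5)$-colorings of $G-v$ so as to minimize the number of saturated vertices in $\{v_1,v_2,v_3\}$, and argue that the minimizer cannot lie in the bad case: otherwise, an appropriate local swap would produce a $(5,5)$-coloring of $G-v$ with strictly fewer saturated $v_i$'s, contradicting the choice of $\varphi$.

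The main obstacle, I expect, will be the subcase in which a saturated neighbor of $v$ has very large degree, so that a direct color flip would itself violate the defect bound and potentially trigger a cascade of recolorings. The no-$4$-cycle condition (forcing the sets $N(v_i)\setminus\{v\}$ to be pairwise disjoint) together with the minimality of $G$ should suffice to localize such a cascade, but the detailed argument will require careful case analysis on the degrees of $v_1,v_2,v_3$ and on the restricted adjacencies that the no-$4$-cycle hypothesis permits among them.
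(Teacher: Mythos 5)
There is a genuine gap. After deleting $v$ and obtaining a $(5,5)$-coloring $\varphi$ of $G-v$, you correctly isolate the bad case ($\varphi(v_1)=\varphi(v_2)=a$, $\varphi(v_3)=b$, some $v_i\in\{v_1,v_2\}$ is $a$-saturated and $v_3$ is $b$-saturated), but you never actually get out of it. Flipping $v_3$ from $b$ to $a$ fails whenever $d(v_3)\geq 12$ (it would then have at least six $a$-neighbors) or whenever some $a$-colored neighbor of $v_3$ is already $a$-saturated, and the no-$4$-cycle condition does nothing to prevent either: it only controls common neighbors of $v_1,v_2,v_3$, not the colors or saturation levels of $N(v_3)$. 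Your fallback --- choose $\varphi$ minimizing the number of saturated vertices among $\{v_1,v_2,v_3\}$ and claim a local swap improves it --- is asserted, not proved; any swap at a saturated $7^+$-vertex can create new defect violations at that vertex or at its neighbors, and the minimality of $G$ (in $3^+$-vertices, then edges) gives you no handle on such a cascade. As written, the proof does not close.

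The paper sidesteps this entirely with a gadget rather than a bare deletion: from $G-v$ it builds $H$ by adding three new $2$-vertices $u_1,u_2,u_3$ and paths $v_1u_1v_2$, $v_2u_2v_3$, $v_3u_3v_1$. Since the $u_i$ are $2$-vertices, $H$ still has fewer $3^+$-vertices than $G$ (and remains planar with no $4$-cycle because the pairwise distances between the $v_i$ do not drop), so it admits a $(5,5)$-coloring. The point of the gadget is that each $v_i$ has degree $d_G(v_i)+1$ in $H$, and in every coloring of $H$ at least one $u_j$ adjacent to the relevant $v_i$ carries the color needed to show that $v_i$ has at most four same-colored neighbors inside $G-v$; this reserves the capacity that your direct approach cannot guarantee, and the extension to $v$ then goes through by a short case analysis on $\varphi(u_1),\varphi(u_3)$. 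If you want to keep a deletion-based argument you would need to replace the recoloring step by some such capacity-reserving device; the swap heuristics in your proposal do not substitute for it.
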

\begin{proof}
Suppose to the contrary that $v$ is a $3$-vertex of $G$ with neighbors $v_1, v_2, v_3$. 
By Lemma~\ref{lem:bal2:edge}, we know that $v_1, v_2, v_3$ are $7^+$-vertices. 
Obtain a graph $H$ from $G-v$ by adding paths $v_1u_1v_2, v_2u_2v_3, v_3u_3v_1$, 
 where $u_1,u_2,u_3$ are three distinct vertices not in $G$.
Note that $H$ is planar and has no $4$-cycles since the pairwise distance between $v_1, v_2, v_3$ did not change. 
See Figure~\ref{fig:bal2-3-vx} for an illustration.
Since $H$ has fewer $3^+$-vertices than $G$, there is a $(5, 5)$-coloring $\varphi:V(H)\rightarrow\{a, b\}$ of $H$. 

If $\varphi(v_1)=\varphi(v_2)=\varphi(v_3)$, then we may extend $\varphi$ to $G$ by using the color in $\{a, b\}\setminus\{\varphi(v_1)\}$ on $v$. 
Otherwise, without loss of generality we may assume $\varphi(v_1)=a$ and $\varphi(v_2)=\varphi(v_3)=b$. 
If $a\in\{\varphi(u_1), \varphi(u_3)\}$, then we may extend $\varphi$ to $G$ by using $a$ on $v$.
Otherwise, $\varphi(u_1)=\varphi(u_3)=b$, so we may extend $\varphi$ to $G$ by using $b$ on $v$.
In all cases we end up with a $(5, 5)$-coloring of $G$, which is a contradiction. 
\end{proof}

\begin{figure}[ht]
	\begin{center}
  \includegraphics[scale=1]{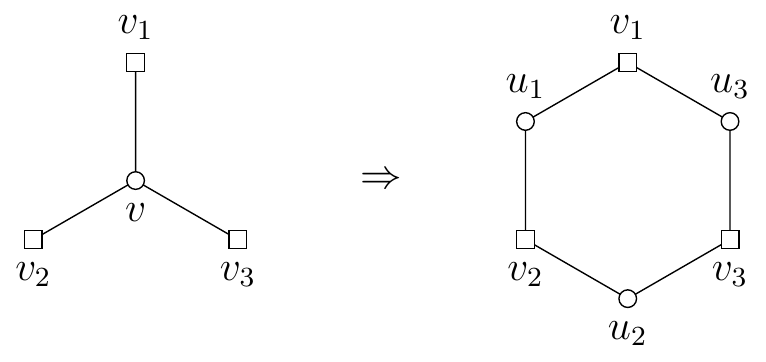}
  \caption{Obtaining $H$ from $G$ in Lemma~\ref{lem:bal2:3-vx}}
  \label{fig:bal2-3-vx}
	\end{center}
\end{figure}

A $3$-face is {\it terrible} if it is incident with a $2$-vertex. 

\begin{lem}\label{lem:bal2:terrible}
A $7^+$-vertex $v$ is incident with at most $\min\{\lfloor{d(v)\over 2}\rfloor, d(v)-6\}$ terrible $3$-faces. 
\end{lem}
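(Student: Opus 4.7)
The plan is to separately establish the two bounds $t \le \lfloor d(v)/2 \rfloor$ and $t \le d(v)-6$ on the number $t$ of terrible $3$-faces incident with $v$, and take the minimum.

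For $t \le \lfloor d(v)/2 \rfloor$, I would fix the cyclic order of the edges $e_1,\ldots,e_{d(v)}$ around $v$ in the given plane embedding and let $u_i$ denote the other endpoint of $e_i$. If two consecutive faces at $v$---bounded respectively by $(e_i,e_{i+1})$ and $(e_{i+1},e_{i+2})$---were both $3$-faces, then both $u_iu_{i+1}$ and $u_{i+1}u_{i+2}$ would be edges, making $vu_iu_{i+1}u_{i+2}v$ a $4$-cycle, which contradicts the hypothesis. Hence the $3$-faces at $v$---and in particular the terrible ones---occupy pairwise non-adjacent slots in the cyclic order, giving at most $\lfloor d(v)/2 \rfloor$ of them.

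For $t \le d(v)-6$, I would argue by contradiction, assuming $t \ge d(v)-5$. Let $u_1,\ldots,u_t$ be the $2$-vertex corners of the terrible $3$-faces incident with $v$ and let $w_i$ be the other neighbor of $u_i$. By Lemma~\ref{lem:bal2:edge} each $w_i$ is a $7^+$-vertex, and the face triangle $vu_iw_i$ forces $vw_i \in E(G)$. A crucial preliminary observation is that $w_1,\ldots,w_t$ are pairwise distinct: if $w_i = w_j$ for $i \neq j$, then $v,u_i,w_i,u_j$ form a $4$-cycle, a contradiction. I then form $G' := G - \{u_1,\ldots,u_t\}$. Since only $2$-vertices are deleted, the number of $3^+$-vertices is unchanged while the number of edges strictly decreases, so by minimality $G'$ admits a $(5,5)$-coloring $\varphi$; in $G'$, the vertex $v$ has only $d(v)-t \le 5$ neighbors.

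The main obstacle is extending $\varphi$ to a $(5,5)$-coloring of $G$. Writing $\varphi(v)=a$ and letting $r_a, r_b$ denote the numbers of $a$- and $b$-colored $G'$-neighbors of $v$ (so $r_a+r_b=d(v)-t \le 5$), I would set $\varphi(u_i)=a$ precisely when $\varphi(w_i)=b$ and $w_i$ already has five $b$-colored neighbors in $G'$, and $\varphi(u_i)=b$ otherwise. The constraints at each $u_i$ (degree $2$, so at most two same-color neighbors) and at each $w_i$ (either not saturated, so one extra same-color neighbor is absorbed, or saturated, in which case the rule assigns $u_i$ the opposite color) are immediate. The delicate condition is at $v$: the set of $u_i$'s assigned color $a$ is contained in $\{i : \varphi(w_i)=b\}$, which by the distinctness of the $w_i$'s has size at most $r_b$, so the $a$-count at $v$ in the extended coloring is at most $r_a+r_b = d(v)-t \le 5$. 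This extension would give a $(5,5)$-coloring of $G$, contradicting the choice of $G$ and proving the second bound.
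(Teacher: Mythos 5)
Your proof is correct, and for the harder bound $t\le d(v)-6$ it takes a genuinely different route from the paper. The paper deletes a \emph{single} $2$-vertex $w$ of one terrible face $wvu$, takes a $(5,5)$-coloring of $G-w$, observes that the only obstruction is $\varphi(u)=a$, $\varphi(v)=b$ with $v$ saturated, and then runs a counting argument: since only $d(v)-2t$ neighbors of $v$ lie outside terrible faces, at least $5+2t-d(v)>t-1$ of the five $b$-colored neighbors of $v$ must sit on the $t-1$ other terrible faces, so by pigeonhole some terrible face $xyv$ has both $x$ and $y$ colored $b$; recoloring the $2$-vertex $x$ with $a$ frees up room to color $w$ with $b$. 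You instead delete \emph{all} $t$ of the $2$-vertices at once, so that $v$ retains only $d(v)-t\le 5$ neighbors in $G'$, and then extend greedily, sending each $u_i$ to the color opposite $v$ unless $w_i$ is a saturated $b$-vertex; the distinctness of the $w_i$ (forced by the absence of $4$-cycles) is exactly what bounds the number of $u_i$ that must revert to $v$'s color by $r_b$, giving the clean estimate $r_a+r_b\le 5$ at $v$. Your version avoids the pigeonhole/recoloring step entirely and makes the role of the hypothesis $t\ge d(v)-5$ more transparent (it directly caps the residual degree of $v$), at the cost of having to check that the simultaneous reinsertion of the $u_i$ causes no interaction --- which is immediate since each $u_i$ has only $v$ and $w_i$ as neighbors. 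Both arguments rely on the same two structural inputs, Lemma~\ref{lem:bal2:edge} and the absence of $4$-cycles, and both are valid; the paper's reduction to $d(v)\le 10$ via $\lfloor d(v)/2\rfloor\le d(v)-6$ for $d(v)\ge 11$ is a convenience you correctly do not need.
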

\begin{proof}
Since $G$ has no $4$-cycles, two $3$-faces cannot share an edge, and thus $v$ is incident with at most $\lfloor{d(v)\over 2}\rfloor$ terrible 3-faces.
Since $\lfloor{d(v)\over 2}\rfloor\leq d(v)-6$ when $d(v)\geq 11$, we may assume $d(v)\leq 10$. 

Suppose to the contrary that $v$ is incident with $t$ terrible 3-faces, where $t \geq d(v)-5$. 
Let $w$ be a $2$-vertex of a terrible 3-face $wvu$; note that $u$ is also a $7^+$-vertex by Lemma~\ref{lem:bal2:edge}. 
Since $G-w$ is a graph with fewer edges than $G$ and the number of $3^+$-vertices did not increase, there is a $(5, 5)$-coloring $\varphi:V(G)\setminus\{w\}\rightarrow\{a, b\}$ of $G-w$. 
If $\varphi(u)=\varphi(v)$, then we may extend $\varphi$ to $G$ by using the color in $\{a, b\}\setminus\{\varphi(u)\}$ on $w$. 
Thus, we may assume $\varphi(u)=a$ and $\varphi(v)=b$. 
Since using the color $b$ on $w$ should not extend $\varphi$ to $G$, we know that $v$ must be saturated by $\varphi$. 


There are $d(v)-2t$ neighbors of $v$ in $G-w$ that are not in terrible 3-faces incident with $v$. 
Since $v$ has five neighbors with the color $b$, at least $5-(d(v)-2t)=5+2t-d(v)$ neighbors of $v$ in $G-w$ with the color $b$ are incident with a terrible 3-face incident with $v$. 
Since neither $w$ nor $u$ is colored with $b$, there are $t-1$ terrible 3-faces incident with $v$ that might have a vertex colored with $b$. 
Since $t\geq d(v)-5$ implies $5+2t-d(v)>t-1$, there exists a terrible 3-face $xyv$ where $x$ is a 2-vertex, other than $wuv$ with $\varphi(x)=\varphi(y)=b$. 
Now, we can extend $\varphi$ to $G$ by coloring $w$ with $b$ and recoloring $x$ with $a$, which contradicts the assumption that $G$ has no $(5, 5)$-coloring. 
\end{proof}

\subsection{Discharging}


We now define the initial charge at each vertex and each face.
For every $v\in V(G)$, let $\mu(v)=2d(v)-6$ and for every face $f\in F(G)$, let $\mu(f)=d(f)-6$. 
The total initial charge is negative since
\begin{align*}
\sum_{z\in V(G)\cup F(G)} \mu(z)
	=\sum_{v\in V(G)} (2d(v)-6)+\sum_{f\in F(G)} (d(f)-6) 
	=-6|V(G)|+6|E(G)|-6|F(G)|
	=-12	<0.\end{align*}
The last equality holds by Euler's formula.
Recall that a $3$-face is terrible if it is incident with a $2$-vertex. 

Here are the discharging rules:

\begin{enumerate}[(R1)]

\item Each $7^+$-vertex sends charge $1$ to each adjacent $2$-vertex. 
\item Each $4$-, $5$-, $6$-vertex sends charge $1$ to each incident $3$-face. 
\item Let $v$ be a $7^+$-vertex.

\begin{enumerate}[(R3A)]

\item $v$ sends charge $3\over 2$ to each incident terrible 3-face.
\item $v$ sends charge $1$ to each incident $3$-face that is not terrible.
\item $v$ sends charge $1\over 2$ to each $5$-face $f$ that is incident with $v$ and incident with a neighbor of $v$ with degree at least $7$.

\end{enumerate}

\end{enumerate}
See Figure~\ref{fig:bal2-rules} for an illustration of the discharging rules. 

\begin{figure}[h]
	\begin{center}
		\includegraphics{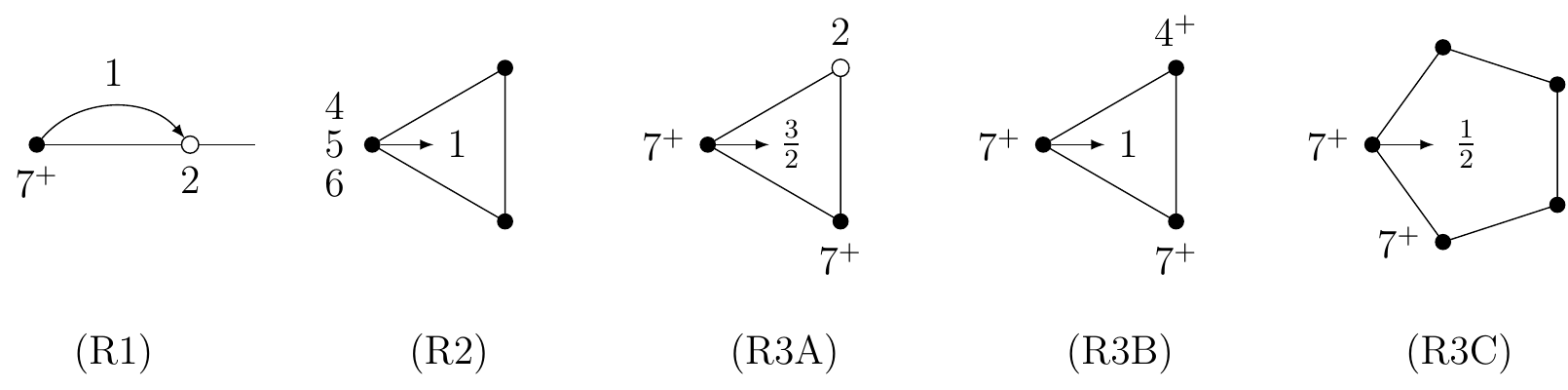}
	\end{center}
  \caption{Discharging rules}
  \label{fig:bal2-rules}
\end{figure}

We denote the final charge of $z$ by $\mu^*(z)$ for each $z \in V(G) \cup F(G)$.
The rest of this section will prove that $\mu^*(z)$ is nonnegative for each $z\in V(G)\cup F(G)$. 

\begin{claim}
Every face has nonnegative final charge. 
\end{claim}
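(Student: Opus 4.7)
The plan is to consider each face $f$ according to $d(f)$. Since $G$ is simple with minimum degree at least $2$ and contains no $4$-cycles, I will first observe that no face has degree $2$ or $4$, so $d(f)\in\{3\}\cup\{5,6,7,\dots\}$. Faces with $d(f)\geq 6$ are trivial: their initial charge $\mu(f)=d(f)-6$ is already nonnegative and no rule ever removes charge from a face, so $\mu^*(f)\geq 0$.

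For a $3$-face $f$ the initial charge is $-3$, so I need $3$ units of incoming charge, and I would split on whether $f$ is terrible. If $f=uvw$ is terrible with $w$ a $2$-vertex, Lemma~\ref{lem:bal2:edge} forces $u$ and $v$ to be $7^+$-vertices, and each of them sends $\tfrac32$ via (R3A), giving exactly $3$. If $f$ is not terrible, Lemma~\ref{lem:bal2:3-vx} removes the possibility of $3$-vertices on $f$, and Lemma~\ref{lem:bal2:edge} prevents two $6^-$-vertices from lying on a single edge of the triangle, so at most one vertex of $f$ is a $6^-$-vertex. Splitting into the cases ``all three vertices are $7^+$'' and ``exactly one is a $4$/$5$/$6$-vertex'', a direct count using (R2) and (R3B) delivers the required $3$ units.

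The only delicate case is a $5$-face, with initial charge $-1$; I must show that $f$ collects at least $1$. Lemma~\ref{lem:bal2:edge} says that along the boundary of $f$ no two consecutive vertices are both $6^-$-vertices, so the $6^-$-vertices of $f$ form an independent set in the bounding $5$-cycle. Because $\alpha(C_5)=2$, the boundary of $f$ has at least three $7^+$-vertices, and any three vertices of $C_5$ include two that are consecutive (else one would have an independent set of size $3$). Picking such a consecutive pair $u,v$ of $7^+$-vertices, rule (R3C) applies to both $u$ and $v$ and each contributes $\tfrac12$ to $f$, yielding $\mu^*(f)\geq 0$.

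I expect the $5$-face analysis to be the main obstacle, since it is the only face type whose nonnegativity is not witnessed by a single incident vertex and requires the pigeonhole input $\alpha(C_5)=2$ twice. The remaining cases follow mechanically from the structural lemmas and the discharging rules, and ruling out $4$-faces uses only simpleness, the absence of $4$-cycles, and the fact that $G$ has no $1$-vertex.
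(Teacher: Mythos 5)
Your proof is correct and follows essentially the same route as the paper: $6^+$-faces are trivial, $4$-faces are excluded, $5$-faces get $2\cdot\frac12$ from two adjacent $7^+$-vertices via (R3C) using Lemma~\ref{lem:bal2:edge} exactly as you describe, and $3$-faces are handled by the same case split (two $7^+$-vertices plus either a $4^+$-vertex contributing via (R2)/(R3B) in the non-terrible case, or a $2$-vertex with both $7^+$-vertices contributing $\frac32$ via (R3A) in the terrible case). The only cosmetic difference is that you organize the $3$-face analysis by terrible versus non-terrible first, while the paper counts the incident $7^+$-vertices first; the content is identical.
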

\begin{proof}
Let $f$ be a face.
It only receives charge and does not give out any charge. 
Thus if $f$ is a $6^+$-face, then $f$ has nonnegative final charge since $\mu^*(f)=\mu(f)=d(f)-6\geq 0$. 
By Lemma~\ref{lem:bal2:edge}, every $5$-face $f$ is incident with at least three $7^+$-vertices, and at least two of these are adjacent to each other. 
Thus, by rule (R3C), $f$ receives charge $1\over 2$ at least twice. 
Thus, $\mu^*(f)\geq 5-6+2\cdot{1\over 2}=0$. 
Note that $f$ cannot be a $4$-face since $G$ has no $4$-cycles. 

Now assume $f$ is a $3$-face. 
By Lemma~\ref{lem:bal2:edge}, $f$ is incident with at least two $7^+$-vertices, which must be pairwise adjacent to each other.
If $f$ is incident with two $7^+$-vertices and the third vertex is a $4^+$-vertex, then $f$ is not a terrible face.
Now $f$ receives either charge $1$ twice by rule (R3B) and charge $1$ once by rule (R2) or charge $1$ three times by rule (R3B). 
In either case, $\mu^*(f)=3-6+3\cdot{1}=0$.
Note that there are no $3$-vertices by Lemma~\ref{lem:bal2:3-vx}.
If $f$ is incident with exactly two $7^+$-vertices, then the third vertex is a $2$-vertex, and $f$ is a terrible 3-face. 
Thus it receives charge ${3\over 2}$ twice by rule (R3A).
Thus, $\mu^*(f)=3-6+2\cdot{3\over 2}=0$.
\end{proof}

\begin{claim}
Each vertex has nonnegative final charge.
\end{claim}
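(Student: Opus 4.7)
The plan is to verify $\mu^*(v)\ge 0$ by casing on $d(v)$. When $d(v)=2$, Lemma~\ref{lem:bal2:edge} forces both neighbors of $v$ to be $7^+$-vertices, so $v$ receives $1+1=2$ via (R1) and $\mu^*(v)=0$; Lemma~\ref{lem:bal2:3-vx} rules out $d(v)=3$; when $d(v)\in\{4,5,6\}$ the only discharging is via (R2), and since $G$ has no $4$-cycles no two $3$-faces share an edge, so $v$ is on at most $\lfloor d(v)/2\rfloor$ incident $3$-faces and $\mu^*(v)\ge 2d(v)-6-\lfloor d(v)/2\rfloor\ge 0$ in each of these three subcases.

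The main case is $d(v)=d\ge 7$, where $v$ only sends charge out. I will list the neighbors $u_1,\ldots,u_d$ in cyclic order at $v$ and partition them into $A$ (the $2$-vertices), $B$ (the $4$-, $5$-, $6$-vertices), and $C$ (the $7^+$-vertices), so $|A|+|B|+|C|=d$ by Lemma~\ref{lem:bal2:3-vx}. Write $t,t'$ for the numbers of terrible and non-terrible $3$-faces at $v$, set $s:=|A|-t$, and let $f_5$ be the number of $5$-faces at $v$ satisfying the hypothesis of (R3C). Then
\[
\text{outflow}(v)=(t+s)+\tfrac{3t}{2}+t'+\tfrac{f_5}{2}=\tfrac{5t}{2}+s+t'+\tfrac{f_5}{2}.
\]
Lemma~\ref{lem:bal2:terrible} combined with $\lfloor d/2\rfloor\le d-6$ for $d\ge 11$ gives $t\le d-6$, while $t+t'\le\lfloor d/2\rfloor$ by the no-4-cycle argument above.

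To bound $f_5$, I will call a cyclic pair $(u_i,u_{i+1})$ a $C$-\emph{edge} if at least one of $u_i,u_{i+1}$ lies in $C$, and write $e_C=2|C|-|CC|$ for the number of such pairs, where $|CC|$ counts consecutive $CC$-pairs. Every $3$-face at $v$ sits on a $C$-edge by Lemma~\ref{lem:bal2:edge}, and a planar chord argument shows that any $7^+$-neighbor of $v$ triggering (R3C) on a $5$-face must also appear $v$-adjacent in the face walk, yielding $f_5\le e_C-(t+t')$. Moreover, each non-terrible $3$-face has at least one $C$-endpoint, so writing $t'_2$ for those with both endpoints in $C$, I obtain $|CC|\ge t'_2$ and $|B|\ge t'-t'_2$ (the latter because no two $3$-faces share an edge $vu_j$, forcing distinct $B$-vertices for distinct non-terrible $3$-faces).

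Substituting these estimates into the outflow expression and using $|A|+|C|=d-|B|$ gives
\[
\text{outflow}(v)\le t+d-\tfrac{t'}{2}+\tfrac{t'_2}{2}\le t+d\le 2d-6,
\]
so $\mu^*(v)\ge 0$. The main obstacle will be justifying $f_5\le e_C-(t+t')$: it rests on the claim that a $7^+$-neighbor of $v$ lying on a $5$-face $f$ must be adjacent to $v$ in the bounding walk of $f$, which is clean when the local cyclic structure around $v$ on $f$ is $2$-connected but requires care when $v$ is locally a cut vertex on $f$ (the $k_{f,v}>1$ situation accommodated by the preliminaries).
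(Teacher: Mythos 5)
Your proposal is correct. For $2$-vertices, for $4$-, $5$-, $6$-vertices, and for the exclusion of $3$-vertices you argue exactly as the paper does. For the main case $d(v)=d\ge 7$ the two arguments rest on the same two structural facts---Lemma~\ref{lem:bal2:edge} to place a $7^+$-vertex on every face at $v$, and Lemma~\ref{lem:bal2:terrible} to get $t\le d-6$ terrible $3$-faces---but the bookkeeping is genuinely different. The paper gives each neighbor a unit weight, lets every non-$2$-vertex neighbor split its unit between the two faces bordering its edge to $v$, checks that only terrible $3$-faces come up short (by exactly $1$ each), and pays that deficit out of the $d-6$ surplus. You instead count corners of $v$ directly: every $3$-face and every $5$-face charged by (R3C) occupies a distinct corner whose pair meets $C$, giving $t+t'+f_5\le 2|C|-|CC|$, and you recover the remaining slack from $|B|\ge t'-t'_2$ and $|CC|\ge t'_2$; your chain of inequalities checks out, the final step needing only $t'_2\le t'$ and $t\le d-6$. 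A point in your favor is that you isolate the step that a $7^+$-neighbor $u$ of $v$ lying on an (R3C)-$5$-face must be consecutive with $v$ on its boundary, which the paper's weight-splitting uses only implicitly. The ``care'' you anticipate for the $k_{f,v}>1$ situation is unnecessary here: since $G$ has no $1$-vertices, every $5$-face is bounded by a $5$-cycle, and a chord of a $5$-cycle would create a forbidden $4$-cycle, so $vu$ is forced to be a boundary edge of $f$ and each such face consumes one $C$-corner of $v$. Thus the one gap you flag closes immediately from facts already established in the paper.
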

\begin{proof}
Each neighbor of a $2$-vertex $v$ must be a $7^+$-vertex by Lemma~\ref{lem:bal2:edge}. 
Therefore $v$ receives charge $1$ twice by (R1).
Thus, $\mu^*(v)=2\cdot2-6+2\cdot1=0$.
Note that there are no $3$-vertices by Lemma~\ref{lem:bal2:3-vx}, and every vertex is incident with at most $\lfloor{d(v)\over 2}\rfloor$ $3$-faces since there are no $4$-cycles in $G$.
If $v$ is a vertex with $d(v)\in\{4, 5, 6\}$, then $v$ sends charge $1$ at most $\lfloor{d(v)\over 2}\rfloor$ times by rule (R2).
Thus, $\mu^*(v)\geq 2d(v)-6-\lfloor{d(v)\over 2}\rfloor\geq 0$.

Now assume $v$ is a $7^+$-vertex. 
We will show that $v$ has nonnegative final charge by a weighting argument. 
Let $u_1, \ldots, u_{d(v)}$ be the neighbors of $v$ in some cyclic order. 
First give all neighbors of $v$ a weight of $1$. 
If $u_i$ is not a $2$-vertex, then split the weight of $1$ it received from $v$, and transfer weight $1\over 2$ to each of the two faces that are incident with $vu_i$; 
if $vu_i$ is incident with only one face, then transfer the entire weight of $1$ to this face. 
Now, every neighbor of $v$ that is a $2$-vertex and every face incident with $v$ that is not a terrible 3-face have weight at least the charge that they should receive from $v$ by the discharging rules.
Every terrible 3-face has weight at most $1$ short of the charge it should receive from $v$ by the discharging rules. 
Now give weight $1$ to each terrible 3-face incident with $v$. 
Since $v$ is incident with at most $d(v)-6$ terrible 3-faces by Lemma~\ref{lem:bal2:terrible}, and each neighbor of $v$ received weight $1$ initially, the total weight spend is at most $2d(v)-6$, which is exactly the initial charge of $v$. 
Thus, the total weight sent is no more than the initial charge of $v$, which proves that the final charge of $v$ is nonnegative.
\end{proof}



\section{Unbalanced $2$-partitions}\label{sec:unbal2}

In this section, we prove the following theorem:

\begin{thm}\label{thmm:unbal2}
\thmunbaltwo
\end{thm}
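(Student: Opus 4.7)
The plan is to follow the two-step template of Section~\ref{sec:bal2}. First, I would prove a necessity lemma asserting that if $S$ is an obstruction set of unbalanced $2$-partitionable planar graphs, then $\{C_3,C_4,C_6\}\subseteq S$ or $S$ contains every odd cycle. Second, I would prove the sufficiency counterpart: there exists a constant $D$ such that every planar graph with no $C_3$, no $C_4$, and no $C_6$ is $(0,D)$-colorable. Combined with the observation that bipartite planar graphs are $(0,0)$-colorable, these two statements imply Theorem~\ref{thmm:unbal2}.

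For necessity, I would exhibit, for each $D$, three families of non-$(0,D)$-colorable planar graphs with sharply restricted cycle lengths. For $C_3$, take two ``books of $2D+1$ pendant triangles'' joined by a single edge $uv$: each book forces its centre to take the independent-set colour in any $(0,D)$-coloring, but the edge $uv$ forbids both centres from doing so, yielding a planar graph whose only cycles are triangles. For $C_4$, the graph $H(D,l)$ from Lemma~\ref{lem:bal2:nec} already produces planar graphs whose cycle lengths lie in $\{4,2l+1\}$; its $H_2$-doubler still forces $\varphi(x)=\varphi(y)$ in the $(0,D)$-regime, and the same overloading argument at the cut vertex rules out $(0,D)$-colorability. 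For $C_6$, I would construct an analogous $H(D,l)$-style graph whose internal doublers are built from length-$3$ paths together with local attachments that block the asymmetric $(a,b)$-colouring (the doubler of length-$3$ paths alone does not force same colour, so some reinforcement is needed), yielding cycle lengths in $\{6,2l+1\}$. Each family forces a distinct short cycle to lie in $S$, so if $S$ misses some odd $C_\ell$, then $\{C_3,C_4,C_6\}\subseteq S$.

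For sufficiency, I would follow the discharging template of Theorem~\ref{thm:bal2}. Let $G$ be a counterexample with the fewest $3^+$-vertices and then the fewest edges; $G$ may be assumed $2$-connected, so every face of $G$ has length $5$ or at least $7$. Prove structural lemmas in the spirit of Lemmas~\ref{lem:bal2:edge}, \ref{lem:bal2:3-vx}, and~\ref{lem:bal2:terrible}: an edge between two low-degree vertices can be deleted and its $(0,D)$-coloring extended via recolouring of saturated endpoints; a vertex of small degree with a restricted neighbourhood can be replaced by a short augmentation that preserves the forbidden cycle conditions; and a high-degree vertex is incident with only boundedly many ``bad'' triangles or $5$-faces by a pigeonhole argument on $(0,D)$-colorings of a smaller graph. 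Assign initial charges of the form $\mu(v)=\alpha d(v)-\beta$ and $\mu(f)=d(f)-\gamma$ with $\alpha,\beta,\gamma$ chosen by Euler's formula so that the total is a negative constant; define rules sending charge from heavy vertices to their low-degree neighbours and to small incident faces; and verify that every final charge is nonnegative, contradicting the negativity of the total.

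The main obstacle will be the presence of $5$-faces in the sufficiency proof. Unlike Borodin--Kostochka's girth-$7$ regime (where $5$-faces are absent), here $5$-faces begin with little initial charge and can be incident with several low-degree vertices, so the reducible configurations must forbid dense clusters of $5$-faces near low-degree vertices and the discharging rules must subsidize $5$-faces without depleting their incident vertices' budgets. A secondary difficulty is designing the necessity gadget for the $\{6,\ell\}$ family so that it is planar, has no accidental short cycles, and still forces the required colour identity at the shared vertex in the asymmetric $(0,D)$ regime.
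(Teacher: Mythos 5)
Your outline matches the paper's template (gadget constructions for necessity, discharging for sufficiency), and your $C_3$ and $C_4$ families are essentially the paper's $T(D)$ and $H(D,l)$. But both halves have genuine gaps. The most serious is the $C_6$ family, which you leave unconstructed while proposing the wrong mechanism for it. Reinforcing the length-$3$-path doubler with ``local attachments'' so that it forces $\varphi(x)=\varphi(y)$ is not the right move: such attachments tend to create new short cycles, and more importantly the forcing you need is not color equality. The paper's $F_1(D;x,y)$ (the bare bundle of $2D+1$ paths of length $3$) already gives the usable one-sided conclusion that $x$ and $y$ cannot \emph{both} lie in the degenerate class: otherwise at most $D$ paths have their first internal vertex in that class and at most $D$ have their second, so some path has both internal vertices in the independent class, which is impossible since they are adjacent. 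One then goes around an odd cycle alternating these gadgets (``not both color $2$'') with genuine edges (``not both color $1$'') and derives a contradiction by parity. Because the alternation pattern fixes the residue of the long cycle length modulo $4$, you need \emph{two} such families ($F$ and $F'$ in the paper, the latter overloading the center of a star) to cover odd lengths $4k+3$ and $4k+1$ respectively; a single family with cycle lengths $\{6,2l+1\}$ for every $l$ is not something your sketch delivers. Without this, your final sentence ``if $S$ misses some odd $C_\ell$, then $\{C_3,C_4,C_6\}\subseteq S$'' is unsupported.

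On the sufficiency side, your plan is a template with no reducible configurations or rules specified, and the two concrete things you do say are shaky. First, the structural lemmas cannot be naive transplants from the balanced case: in a $(0,D)$-coloring the classes are asymmetric, so a saturated vertex in the degenerate class cannot simply be flipped into the independent class (it may have a neighbor there). The paper instead takes a vertex-minimal counterexample and proves that every $46^-$-vertex has a $47^+$-neighbor, using a coloring of $G-v$ chosen to maximize the number of neighbors of $v$ in the independent class. Second, the hypothesis that $6$-cycles are absent must actually be used somewhere; in the paper it enters through the ``bad face'' analysis (a $5$-face with two nonadjacent $2$-vertices), where the absence of $3$-, $4$-, and $6$-cycles shows a $2$-vertex lies on at most one bad face, which bounds the number of bad faces at a high-degree vertex and makes the charge balance work. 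Your sketch correctly flags $5$-faces as the obstacle but does not identify this configuration or any mechanism exploiting the missing $6$-cycles, so the discharging cannot be verified as proposed.
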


We will first show a necessary condition for cycle obstruction sets, and then show that it is sufficient afterwards. 

\begin{lem}\label{lem:unbal2:nec}
If a set $S$ of cycles is an obstruction set of unbalanced $2$-partitionable planar graphs, then either $\{C_3, C_4, C_6\} \subseteq S$ or $S$ contains all odd cycles.
\end{lem}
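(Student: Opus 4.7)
The plan is to prove the contrapositive: suppose $S$ fails to contain some cycle $C_\ell\in\{C_3,C_4,C_6\}$ and also fails to contain some odd cycle $C_{2k+1}$ with $k\ge 1$. For every integer $D\ge 0$, I aim to construct a planar graph that avoids every cycle in $S$ but is not $(0,D)$-colorable, which shows $S$ cannot be an obstruction set. I fix one such $\ell$ and one such odd length $2k+1$, and split into three cases according to $\ell$.

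If $C_3\notin S$, I take two adjacent hubs $u,v$ and attach to each a ``book'' consisting of $D+1$ triangles sharing only that hub. The only cycles are these triangles. A book at $w\in\{u,v\}$ forces $\varphi(w)=a$ in any $(0,D)$-coloring, because if $\varphi(w)=b$ then each triangle contributes at least one $b$-neighbor of $w$ (the two non-$w$ vertices cannot both be colored $a$), producing at least $D+1$ $b$-neighbors and violating the $b$-degree bound. But then $\varphi(u)=\varphi(v)=a$ contradicts the edge $uv$. If $C_4\notin S$, I re-use the graph $H(D,l)$ from Lemma~\ref{lem:bal2:nec}, with $l$ chosen so that $2l+1$ equals an odd length not in $S$. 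The only new verification is that $H_2(D;x,y)$ still forces $\varphi(x)=\varphi(y)$ under $(0,D)$-coloring: a mixed assignment on $\{x,y\}$ makes all $2D+1$ midpoints (adjacent to the $a$-endpoint) colored $b$, giving the $b$-endpoint $2D+1>D$ $b$-neighbors. With this, the argument from Lemma~\ref{lem:bal2:nec} transfers verbatim: the direct edge $v_1v_{l+1}$ rules out color $a$ on the forced-monochromatic path, and the $D+1$-fold gluing at $v_1$ furnishes $D+1$ $b$-neighbors of $v_1$.

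The case $C_6\notin S$ is the main obstacle, since $6$-cycles alone are too weak to support a ``same-color'' forcer in the unbalanced setting. My plan is to introduce a gadget $\widetilde H_2(D;x,y)$ consisting of $2D+1$ internally disjoint $x$-$y$ paths of length $3$, whose only cycles are $6$-cycles, and to prove by a short counting argument that in any $(0,D)$-coloring the pair $\{x,y\}$ cannot both be colored $b$: each length-$3$ path contributes at least $1$ to the sum of the $b$-degrees of $x$ and $y$, and $2D+1>2D$. Combining this one-sided forcer with an odd-cycle backbone of length exactly $2k+1$ (to realize the complementary ``not-both-$a$'' constraint without reintroducing $3$- or $4$-cycles) and duplicating the configuration $D+1$ times at a common hub should yield a planar graph with cycle spectrum contained in $\{C_6,C_{2k+1}\}$ that is not $(0,D)$-colorable. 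The delicate points are verifying planarity, controlling the cycle spectrum so that no unintended length lying in $S$ appears, and actually building the ``not-both-$a$'' forcer from $6$-cycles alone; for the last I would draw on the construction of~\cite{2010BoIvMoOcRa} of planar graphs of girth $6$ that are not $(0,D)$-colorable, subdividing judiciously to collapse all odd cycle lengths to the chosen value $2k+1$.
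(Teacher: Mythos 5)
Your cases $C_3\notin S$ and $C_4\notin S$ are correct and essentially coincide with the paper's constructions: your pair of adjacent triangle-books is the paper's $T(D)$, and re-using $H(D,l)$ with the same-color forcing of $H_2$ is exactly what the paper does. The genuine gap is the case $C_6\notin S$, which is the heart of the lemma and which you leave as an admitted sketch. One concrete problem: your plan to obtain the ``not-both-$a$'' forcer by importing the girth-$6$ construction of~\cite{2010BoIvMoOcRa} and ``subdividing judiciously'' is both unworkable and unnecessary. That graph has a rich cycle spectrum that subdivision cannot collapse to a single odd value, whereas a single edge already forces ``not both $a$'', since the color class $a$ must be independent in a $(0,D)$-coloring. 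The right backbone is therefore an odd cycle some of whose edges are kept (forcing ``not both $a$'') and some of whose edges are replaced by your $\widetilde H_2$ gadget (forcing ``not both $b$''); a backbone on $l$ vertices with $j$ replaced edges yields long cycles of length $l+2j$, which is how one hits a prescribed odd length while otherwise creating only $6$-cycles.

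Second, and more importantly, a single global structure of the kind you describe (``duplicate $D+1$ times at a common hub'') cannot cover all odd target lengths. Propagating the alternating constraints around the odd backbone forces the root vertex to avoid one specific color, and which color that is depends on whether the root is flanked by two gadgets or by two plain edges; the two arrangements realize different residues of the target length modulo $4$. The paper accordingly needs two constructions: $F(D;l)$ (two copies of $F_o$, whose roots are flanked by gadgets and hence cannot take color $b$, joined by an edge) realizes lengths $4k+3$, while $F'(D;l)$ (a star on $D+2$ vertices with a copy of $F_e$, whose root is flanked by plain edges and hence forced to color $b$, attached to each vertex) realizes lengths $4k+1$; the length-$3$ case is absorbed by the triangle construction. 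Until you specify which forcing direction your configuration produces and supply the matching global structure for each residue class, the $C_6$ case is not proved.
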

\begin{proof}
For a nonnegative integer $D$, a positive integer $l$, and a vertex $v$, recall that $H(D, l)$ from Section~\ref{sec:bal2} is not $(D, D)$-colorable and the only cycles in $H_1(D, l; v)$ have length either $4$ or $2l+1$. 
Therefore $H(D, l)$ is not $(0, D)$-colorable as well.
Therefore $S$ contains either $C_4$ or all odd cycles. 

Given a nonnegative integer $D$ and two vertices $x$ and $y$, let $F_1(D; x, y)$ be the graph that consists of $2D+1$ internally disjoint $x, y$-paths of length $3$. 
See Figure~\ref{fig:unbal2-tight} for an illustration of $F_1(D;x,y)$.
For an odd integer $l\geq 3$ and a vertex $v_1$, let $F_o(D, l; v_1)$ be the graph obtained from an odd cycle with vertices $v_1, \ldots, v_l$ by replacing each edge $v_iv_{i+1}$ with $F_1(D; v_i, v_{i+1})$ where $i$ is an odd integer at most $l$ (where $v_{l+1}$ is treated as $v_1$). 
Finally, obtain $F(D; l)$ from two disjoint copies of $F_o(D, l; v_1)$ and adding an edge between the two vertices that correspond to $v_1$. 
See Figure~\ref{fig:unbal2-tight} for an illustration of $F(D;5)$.

Now in any $(0, D)$-coloring of $F_1(D; x, y)$, it is easy to see that $x$ and $y$ cannot both receive the color $2$. 
The two cutvertices of $F(D; l)$ cannot both receive the color $1$ in any $(0, D)$-coloring, thus at least one cutvertex $v$ receives the color $2$. 
In the copy that corresponds to $F_o(D, l; v)$, either there is an edge with both endpoints colored with the color $1$ or there is a copy of $F_1(D; x, y)$ where both $x$ and $y$ receive the color $2$.
This shows that $F(D; l)$ is not $(0, D)$-colorable. 
It is not hard to see that the only cycles in $F(D; l)$ have length either $6$ or $2l+1$ where $l$ is an odd integer at least $3$.
Therefore $S$ contains either $C_6$ or all cycles of lengths $4k+3$ where $k$ is a positive integer. 

For an odd integer $l\geq 3$ and a vertex $v_1$, let $F_e(D, l; v_1)$ be the graph obtained from an odd cycle with vertices $v_1, \ldots, v_l$ by replacing each edge $v_iv_{i+1}$ with $F_1(D; v_i, v_{i+1})$ where $i$ is an even integer at most $l$. 
Finally, let $F'(D; l)$ be the graph obtained from a star with $D+2$ vertices by attaching a copy of $F_e(D, l; v)$ to each vertex $v$ of the star. 
See Figure~\ref{fig:unbal2-tight} for an illustration of  $F'(2;5)$.

As above, $x$ and $y$ cannot both receive the color $2$ in any $(0,D)$-coloring of $F_1(D; x, y)$. 
This implies that every cutvertex of $F'(D; l)$ must be colored with color $2$ in a $(0, D)$-coloring.
Yet, now there exists a cutvertex of that has $D+1$ neighbors colored with the color $2$, which implies that $F'(D; l)$ is not $(0, D)$-colorable. 
It is not hard to see that the cycles in $F'(D; l)$ have length either $6$ or $2l-1$ where $l$ is an odd integer at least $3$. 
Therefore $S$ contains either $C_6$ or all cycles of lengths $4k+1$ where $k$ is a positive integer. 

Let $T_0(D; x)$ be the graph obtained from $D+1$ pairwise disjoint $3$-cycles by identifying one vertex in each cycle into $x$. 
Now let $T(D)$ be the graph obtained from two copies of $T_0(D; x)$ and adding an edge between the two vertices corresponding to $x$. 
In any $(0, D)$-coloring of $T_0(D; x)$, the vertex $x$ must not receive color $2$ since it will have $D+1$ neighbors colored with $2$.
Yet, in $T(D)$, one of the two cutvertices, which corresponds to $x$ in a copy of $T_0(D; x)$, will receive color $2$.
This shows that $T(D)$ is not $(0, D)$-colorable, and it is easy to see that $T(D)$ contains only  $3$-cycles. 
Hence $S$ contains $C_3$. 

To sum up, the obstruction set of unbalanced $2$-partitionable planar graphs must contain $C_3$, and contains either $\{C_4, C_6\}$ or all odd cycles of length at least five. 
In other words, $S$ contains either $\{C_3, C_4, C_6\}$ or all odd cycles. 
\end{proof}

\begin{figure}[ht]
	\begin{center}
  \includegraphics[scale=0.8]{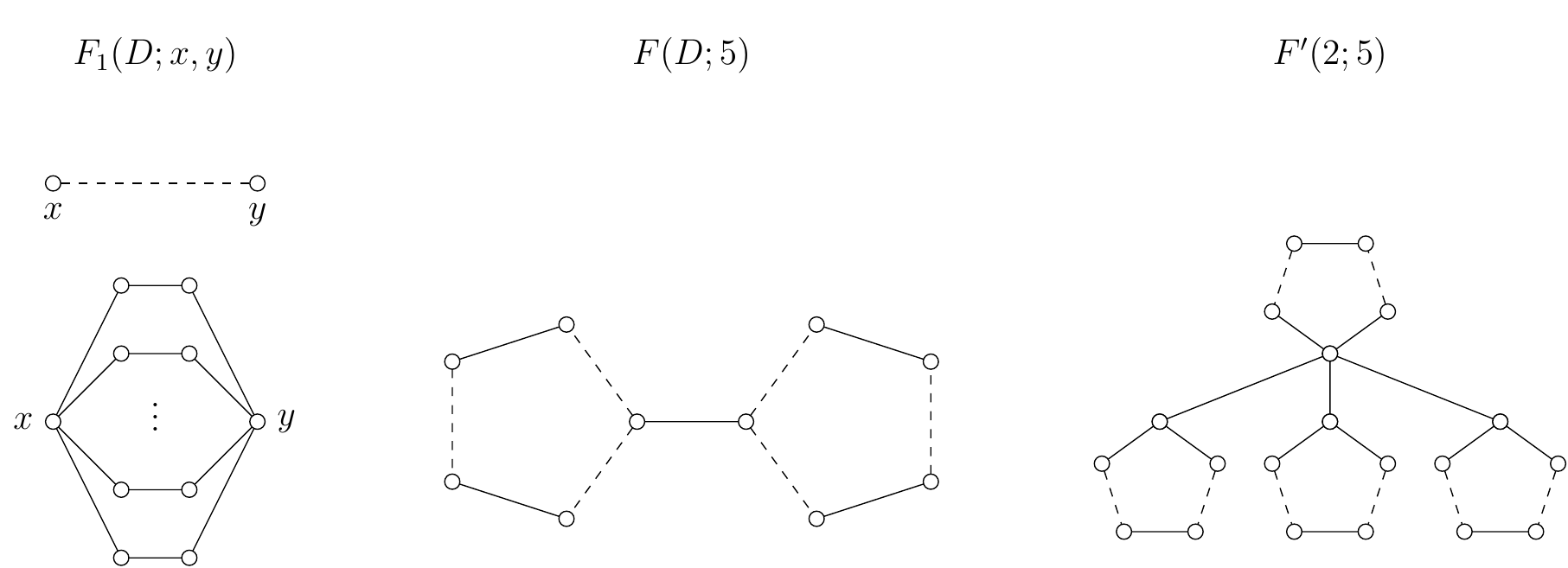}
  \caption{Graphs that are not $(0, D)$-colorable}
  
\label{fig:unbal2-tight}
	\end{center}
\end{figure}

If either $C_4$ or $C_6$ is not in an obstruction set $S$ of unbalanced $2$-partitionable planar graphs, then all odd cycles must be in $S$.
This implies that the graph is bipartite and $(0, 0)$-colorable, and hence it is unbalanced $2$-partitionable. 
The following theorem shows that $\{C_3, C_4, C_6\}$ is an obstruction set of unbalanced $2$-partitionable planar graphs. 
Note that Lemma~\ref{lem:unbal2:nec} and Theorem~\ref{thm:unbal2} imply Theorem~\ref{thmm:unbal2}.

\begin{thm}\label{thm:unbal2}
A planar graph with no $3$-, $4$-, $6$-cycles is $(0, 45)$-colorable. 
\end{thm}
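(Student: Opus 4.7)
The plan is to take a counterexample $G$ to Theorem~\ref{thm:unbal2} minimizing first $|V(G)|$ and then $|E(G)|$, fix a plane embedding, and derive a contradiction via a discharging argument. The hypothesis that $G$ has no $3$-, $4$-, or $6$-cycles restricts every face length to $5$ or at least $7$ (once standard minimality shows $G$ is $2$-connected, so each face is bounded by a cycle). A routine extension argument rules out $1$-vertices: apply minimality to $G-v$ and put $v$ into the defective class $V_2$ regardless of the color of its unique neighbor, since $v$ then contributes only $1$ to the $V_2$-degree of that neighbor and the capacity is $45$.

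The heart of the argument is a short list of reducible configurations centered on $2$-vertices. For a $2$-vertex $v$ with neighbors $u_1, u_2$, take a $(0,45)$-coloring $\varphi$ of $G - v$; the only case in which $\varphi$ fails to extend is when exactly one of $u_1, u_2$ lies in $V_2$ and is already saturated (has $45$ neighbors in $V_2$). Exploiting this together with recoloring tricks (removing $2$-vertices in pairs, or swapping the color of a neighbor of the saturated $u_i$), I aim to prove: (i) every $2$-vertex has at least one neighbor of large degree (a \emph{heavy} neighbor); and (ii) the number of $2$-vertex neighbors of a vertex of degree $d$ is bounded above by a suitable linear function of $d$. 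An auxiliary lemma restricting how many $2$-vertices can lie on the boundary of a single $5$-face is also likely to be needed, since $5$-faces are the charge-scarce faces.

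For discharging, assign initial charges $\mu(v) = d(v) - 4$ and $\mu(f) = d(f) - 4$; by Euler's formula the total is $-8$. Since each face has length $5$ or at least $7$, every face begins with non-negative charge, with $5$-faces having $+1$ and faces of length $\ell \geq 7$ having $\ell - 4 \geq 3$, so all the negative charge is localized on $2$- and $3$-vertices. Rules of the form (R1) each face sends a small fixed amount to each incident $2$- or $3$-vertex on its boundary, (R2) each heavy vertex sends a small fixed amount to each adjacent $2$-vertex, and (R3) heavy vertices top up incident $5$-faces when needed, together with the structural lemmas, should yield non-negative final charge everywhere, contradicting $\sum_z \mu(z) = -8$. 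The main obstacle will be calibrating the heavy/light threshold and the bound in (ii) simultaneously: the extension argument needs the threshold high (a vertex with $45$ saturating neighbors in $V_2$ can block extension), while the discharging wants it low (so a heavy vertex retains non-negative charge after donating to its many $2$-vertex neighbors and to incident $5$-faces, whose charge budget is only $+1$). The tension between these two requirements is precisely what forces the constant $45$ in the statement.
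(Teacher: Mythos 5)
Your overall architecture (minimal counterexample, ``every $2$-vertex has a heavy neighbor,'' discharging) matches the paper's, but two of your load-bearing ingredients are not viable as stated. First, your lemma (ii) --- a nontrivial linear upper bound on the number of $2$-vertex neighbors of a heavy vertex --- is not something the paper proves, and I do not see how it could be proved: deleting a single $2$-vertex $w$ with neighbors $v,u$ is only non-extendable when one of $v,u$ is in the independent class and the other is saturated, and nothing prevents a $47^+$-vertex from having essentially all of its neighbors be $2$-vertices whose second neighbors are distinct heavy vertices (no short cycle is forced). The paper sidesteps this entirely by taking $\mu(v)=2d(v)-6$ and $\mu(f)=d(f)-6$: a $47^+$-vertex then has charge $\approx 2d$ and can afford to send a full unit to \emph{every} neighbor plus $\tfrac34$ (or $1$) per incident face, with $2d-6-d-\lfloor d/2\rfloor-\tfrac34\lceil d/2\rceil\ge 0$ exactly at $d=47$. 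With your $\mu(v)=d(v)-4$ the heavy vertices cannot pay a fixed amount near $1$ to each of up to $d(v)$ two-valent neighbors, and your $3$-vertices start at $-1$ and become an additional sink, so the deficit is no longer localized at $2$-vertices; without lemma (ii) the vertex side of your discharging does not close. Relatedly, your degree lemma should be proved for \emph{all} $46^-$-vertices, not just $2$-vertices (the paper's recoloring argument, maximizing the number of neighbors in the independent class, gives this), since that is what forbids three consecutive low-degree vertices on a face boundary; and the claim that minimality makes $G$ $2$-connected is not routine for defective colorings, because colorings of two blocks need not agree at a cut vertex and their defects there would add.

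Second, you have not identified where the absence of $6$-cycles actually does its work in the discharging. In the paper the critical configuration is a \emph{bad} $5$-face: a $5$-face carrying two nonadjacent $2$-vertices (or two adjacent ones plus a third) and only one $47^+$-vertex. Such a face cannot be paid for by any uniform face-to-$2$-vertex rule out of a budget of $+1$, so the paper introduces a special rule for bad faces and proves, using precisely the no-$3$-, $4$-, $6$-cycle hypothesis, that a $2$-vertex lies on at most one bad face, hence a $47^+$-vertex is incident with at most $\lfloor d(v)/2\rfloor$ of them. Your sketch gestures at ``an auxiliary lemma restricting how many $2$-vertices can lie on a single $5$-face,'' but the true obstruction is not the count of $2$-vertices on one face; it is the global multiplicity of these deficient faces around a single heavy vertex, and without the bad-face lemma the $5$-face analysis (the only genuinely delicate part of the proof) does not go through.
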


In this section, let $G$ be a counterexample to Theorem~\ref{thm:unbal2} with the minimum number of vertices. 
Also, fix a plane embedding of $G$. 
It is easy to see that $G$ is connected and has no $1$-vertices.
From now on, given a (partially) $(0, 45)$-colored graph, we will let $a$ and $b$ be the two colors where $b$ is the color class allowed to have maximum degree at most $45$, and we say a vertex colored with $b$ is {\it saturated} if it already has 45 neighbors colored with $b$.

\subsection{Structural lemmas}

\begin{lem}\label{lem:unbal2:vx-degree}
Any $46^-$-vertex is adjacent to a $47^+$-vertex. 
\end{lem}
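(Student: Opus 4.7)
The plan is a standard minimum-counterexample extension argument. Suppose, toward a contradiction, that some $46^-$-vertex $v$ has all of its neighbors being $46^-$-vertices as well. Since $G$ is a vertex-minimum counterexample, the graph $G-v$ admits a $(0,45)$-coloring $\varphi$, and it suffices to extend $\varphi$ to $v$. If no neighbor of $v$ is colored $a$ under $\varphi$, I will simply put $\varphi(v) = a$, which is legal since $v$ then has no $a$-colored neighbor. Otherwise $v$ has at least one $a$-colored neighbor, so at most $d(v)-1 \le 45$ neighbors of $v$ are colored $b$; the only obstacle to putting $\varphi(v) = b$ is the possible presence of a saturated $b$-colored neighbor of $v$.

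The key move is to identify every saturated $b$-colored neighbor of $v$ and recolor it to $a$. Let $U$ be that set. For $u \in U$, saturation means $u$ has $45$ neighbors colored $b$ in $G-v$, and combined with $d(u) \le 46$ this forces $d(u) = 46$ and every neighbor of $u$ other than $v$ to be colored $b$ under $\varphi$. In particular $u$ has no $a$-colored neighbor in $G-v$, so switching $u$ to color $a$ is locally legal. The crucial global point is that $G$ has no $3$-cycle, so $N(v)$ is an independent set; hence $U$ is an independent set and the simultaneous recoloring of all vertices of $U$ does not place two adjacent vertices into color class $a$.

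After recoloring each $u \in U$ to $a$ and then setting $\varphi(v) = b$, I verify that the resulting coloring $\varphi'$ is a valid $(0,45)$-coloring of $G$. Color class $a$ remains independent: each $u \in U$ sees only $b$-colored vertices under $\varphi'$, namely its $45$ neighbors in $G-v$ (still $b$) and the newly $b$-colored $v$. For color class $b$, the vertex $v$ has at most $d(v)-1 \le 45$ $b$-neighbors, since the $a$-neighbor guaranteed by our case assumption stays $a$ and the recoloring of $U$ only decreases this count. Every other $b$-colored vertex $w \neq v$ is either non-adjacent to $v$, in which case its $b$-neighbor count is weakly smaller than under $\varphi$, or adjacent to $v$ and not in $U$, in which case $w$ was non-saturated under $\varphi$ and hence had at most $44$ $b$-neighbors, to which the gain of $v$ adds at most one, for a total of at most $45$. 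This contradicts $G$ being a counterexample.

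The one place where anything subtle happens is the simultaneous recoloring of $U$: handling a single saturated neighbor would be routine, but several of them could in principle create adjacencies in color class $a$. This is precisely where the $C_3$-free hypothesis is essential, as it forces $N(v)$, and thus $U$, to be independent. The remainder of the verification is a routine degree count built on the counter-hypothesis that $d(u) \le 46$ for every neighbor $u$ of $v$.
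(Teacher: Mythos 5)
Your proof is correct, but it takes a genuinely different route from the paper's at the key step. The paper chooses the $(0,45)$-coloring $\varphi$ of $G-v$ \emph{extremally}, maximizing the number of neighbors of $v$ colored $a$; maximality then forces every $b$-colored neighbor of $v$ to have an $a$-colored neighbor in $G-v$, so (using $d(u)\le 46$) each has at most $44$ $b$-neighbors and none is saturated, and $v$ can be colored $b$ directly. You instead start from an arbitrary coloring and explicitly repair it, simultaneously recoloring the set $U$ of saturated $b$-colored neighbors to $a$; the repair is sound because saturation plus $d(u)\le 46$ forces all of $u$'s neighbors in $G-v$ to be colored $b$, and because $N(v)$ is independent (no $3$-cycles), so $U$ is independent and no $u\in U$ has a neighbor in $U$. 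The trade-off is that your argument genuinely uses the $C_3$-free hypothesis, whereas the paper's extremal choice makes the lemma hold for \emph{any} planar graph (indeed any graph) that is a vertex-minimal counterexample, with the cycle conditions playing no role in this lemma; the paper's version is therefore slightly more robust and shorter, while yours makes the recoloring mechanism fully explicit. Your verification of both color classes after the swap is complete and accurate.
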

\begin{proof}
Suppose to the contrary that a $46^-$-vertex $v$ is adjacent to only $46^-$-vertices. 
Since $G-v$ is a graph with fewer vertices than $G$, there is a $(0, 45)$-coloring $\varphi$ of $G-v$; choose $\varphi$ that maximizes the number of neighbors of $v$ with the color $a$.
At least one neighbor of $v$ has color $a$, since otherwise we can extend $\varphi$ to all of $G$ by coloring $v$ with color $a$. 
Also, every neighbor of $v$ colored $b$ has a neighbor in $G-v$ with the color $a$, otherwise it can be recolored by $a$ and violates the choice of $\varphi$. 
Since each neighbor $u$ of $v$ has at most $45$ neighbors in $G-v$, $u$ has at most $44$ neighbors with the color $b$ in $G-v$. 
So no neighbor of $v$ is saturated.
Hence we can extend $\varphi$ to $G$ by coloring $v$ with color $b$.
This contradicts that $G$ is a counterexample, and thus proves the claim. 
\end{proof}

Since $G$ has no $3$-cycles and no $4$-cycles, every $5$-face is bounded by a cycle. 
A {\it bad face} is a $5$-face $f$ where the degrees of the vertices on a boundary walk is as in Figure~\ref{fig:unbal2-bad}.

\begin{figure}[ht]
	\begin{center}
  \includegraphics[scale=1]{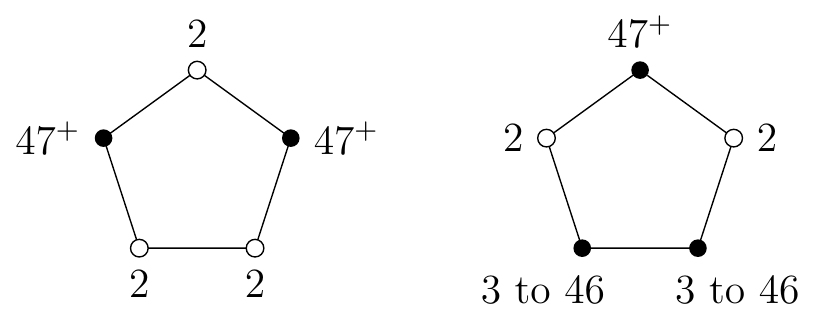}
  \caption{Bad faces}
  \label{fig:unbal2-bad}
	\end{center}
\end{figure}

\begin{lem}\label{lem:unbal2:bad-faces}
Any $2$-vertex cannot be incident with two bad faces. 
\end{lem}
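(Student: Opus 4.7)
The plan is to derive a contradiction by showing that two bad faces sharing a $2$-vertex would force $G$ to contain a short cycle of length $3$, $4$, or $6$. Since bad faces are $5$-faces, and since the preceding paragraph notes that every $5$-face is bounded by a cycle, I can work purely with the cycle structure of the boundaries without invoking the particular degree pattern of Figure~\ref{fig:unbal2-bad}.

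First I will set up the configuration: assume for contradiction that a $2$-vertex $v$ with neighbors $x$ and $y$ is incident to two distinct bad faces $f_1$ and $f_2$. Because $d(v)=2$, both edges $vx$ and $vy$ lie on the boundary of each of $f_1$ and $f_2$, so the two $5$-cycles can be written as $v\,x\,p_1\,p_2\,y$ and $v\,x\,q_1\,q_2\,y$ for vertices $p_1,p_2,q_1,q_2 \in V(G)\setminus\{v,x,y\}$, with $p_1p_2$ and $q_1q_2$ edges of $G$.

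Next I will carry out a case analysis based on $\{p_1,p_2\}\cap\{q_1,q_2\}$. If the two pairs are disjoint, then $x\,p_1\,p_2\,y\,q_2\,q_1\,x$ traces a $6$-cycle, contradicting the absence of $6$-cycles. If exactly one coincidence occurs, I will split into two symmetric subcases: $p_1 = q_1$ (equivalently, by symmetry, $p_2 = q_2$) yields the $4$-cycle on vertices $p_1,p_2,y,q_2$ using the inner edges from each face boundary, whereas $p_1 = q_2$ (equivalently $p_2 = q_1$) produces the $3$-cycle on $p_1,p_2,y$ from the edges $p_1p_2$ and $p_2y$ of $f_1$ together with the edge $yp_1 = yq_2$ of $f_2$. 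Finally, if both $p_1=q_2$ and $p_2=q_1$ hold, then the edges $xp_1,\,p_1y,\,yp_2,\,p_2x$ form a $4$-cycle through $x,p_1,y,p_2$; the remaining possibility $p_1=q_1,\,p_2=q_2$ would force $f_1=f_2$, contradicting distinctness. In every case, $G$ contains a cycle of length $3$, $4$, or $6$, which is the desired contradiction.

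The main obstacle is purely careful bookkeeping: in each case one must verify that the vertices forming the short cycle are genuinely distinct. Each such check follows quickly from the fact that $f_1$ and $f_2$ are $5$-cycles (so their five boundary vertices are pairwise distinct) together with the simplicity of $G$. For instance, in the subcase $p_1=q_1$ the needed distinctness of $\{p_1,p_2,y,q_2\}$ uses $p_2 \neq q_2$ (the subcase hypothesis), $p_1 \neq y$ and $p_2 \neq y$ (from the $5$-cycle $f_1$), and $q_2 \neq p_1 = q_1$ (from the $5$-cycle $f_2$). Similar one-line checks handle the other subcases.
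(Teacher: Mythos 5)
Your proof is correct and follows essentially the same route as the paper's: label the two $5$-face boundary cycles through the $2$-vertex and its two neighbors, then case on how the remaining vertex pairs intersect to exhibit a forbidden $3$-, $4$-, or $6$-cycle. The only cosmetic difference is that you organize the cases by the size of the intersection and explicitly dispose of the double-coincidence configurations, while the paper simply checks each single identification; both case analyses are exhaustive and the distinctness checks you describe are the right ones.
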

\begin{proof}
Suppose to the contrary that a $2$-vertex $v$ is incident with two bad faces where $x, v,y,v_1,v_2$ and $x, v, y,u_1,u_2$ are vertices, in this order, of boundary walks of the two bad faces. 
If $v_1=u_2$ (or $v_2=u_1$), then $G$ contains a $3$-cycle $xv_1v_2$ (or $yv_1v_2$), which is a contradiction.
If $v_1=u_1$ (or $v_2=u_2$), then $G$ has a $4$-cycle $v_1v_2xu_2$ (or $yv_1v_2u_1$), which is again a contradiction.
Therefore, $\{v_1, v_2\}\cap\{u_1, u_2\}=\emptyset$, and this implies that $G$ contains a $6$-cycle with vertices $x, v_2, v_1, y, u_1, u_2$, which is a contradiction. 
\end{proof}

\begin{lem}\label{lem:unbal2:bad-faces1}
Any  $47^+$-vertex $v$ is incident with at most $\lfloor{d(v)\over 2}\rfloor$ bad faces. 
\end{lem}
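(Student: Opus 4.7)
The plan is to prove the following stronger local claim: \emph{no two bad faces incident with $v$ share an edge}. Given this, since the faces around $v$ lie in a cyclic order and every two consecutive ones share an edge incident with $v$, the bad faces among them form an independent set in a cycle of length $d(v)$, which has size at most $\lfloor d(v)/2 \rfloor$.

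To establish the local claim, suppose for contradiction that bad faces $f_1$ and $f_2$ share an edge $vu$, and write their boundary walks (which are cycles, since $G$ has no $3$- or $4$-cycles) on opposite sides of $vu$ as
\[
f_1:\ v\,u\,a_1\,a_2\,a_3\,v, \qquad f_2:\ v\,u\,b_1\,b_2\,b_3\,v.
\]
By the definition of a bad face (read off from Figure~\ref{fig:unbal2-bad}), each of $f_1$ and $f_2$ contains a $2$-vertex. First I would dispatch the case that $u$ itself is the $2$-vertex in one of the two faces: then $u$ is a $2$-vertex lying on both $f_1$ and $f_2$, contradicting Lemma~\ref{lem:unbal2:bad-faces}. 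So I may assume the $2$-vertex of each $f_i$ is one of $a_1,a_2,a_3$ or $b_1,b_2,b_3$ respectively, and in particular $u$ has degree at least $3$.

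Next I would rule out every possible coincidence among the vertices on the combined boundary $u\,a_1\,a_2\,a_3\,v\,b_3\,b_2\,b_1\,u$: any identification of an $a_i$ with a $b_j$ (or with $v$ or $u$), together with the existing edges on the two boundary walks, produces a cycle of length $3$, $4$, or $6$, in direct analogy with the case analysis in the proof of Lemma~\ref{lem:unbal2:bad-faces}. For example, $a_1=b_1$ would force $u$ to have only the neighbors $v$ and $a_1$ on $f_1\cup f_2$ and hence be a $2$-vertex (already handled), $a_3=b_3$ gives parallel edges at $v$, $a_2=b_2$ yields a $4$-cycle $a_2 a_3 v b_3 a_2$, $a_1=b_2$ or $a_2=b_1$ yields a $3$-cycle, and so on; the remaining identifications produce either a $4$-cycle through $v$ or a $6$-cycle. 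Finally, if no coincidence occurs, the combined outer walk $u a_1 a_2 a_3 v b_3 b_2 b_1 u$ together with the chord $vu$ and the fact that the two $2$-vertices are adjacent to $v$ (per the bad-face picture) produces a $6$-cycle through $v$, again contradicting the hypothesis that $G$ contains no $6$-cycle.

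The main obstacle is this finite but multi-branch case analysis: one must keep track of where the $2$-vertex sits on each bad face (which dictates which positions among $a_1,a_2,a_3$ are fixed as $2$-vertices, hence cannot equal any $b_j$ of different degree) and combine this with the girth constraints. Once the local claim is in hand, the final count $\lfloor d(v)/2 \rfloor$ is immediate from the independent-set-in-$C_{d(v)}$ observation above.
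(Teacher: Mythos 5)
Your overall skeleton matches the paper's: reduce the count to the claim that no edge incident with $v$ lies in two bad faces (the pigeonhole/independent-set step is the same), and then invoke Lemma~\ref{lem:unbal2:bad-faces}. The first branch of your case analysis --- $u$ is a $2$-vertex, hence a $2$-vertex incident with two bad faces, contradiction --- is exactly the paper's entire argument. The problem is that you treat this as only one of several cases, whereas the definition of a bad face makes it the \emph{only} case: in both degree patterns of Figure~\ref{fig:unbal2-bad}, every edge of a bad face that is incident with a $47^+$-vertex has a $2$-vertex as its other endpoint (the only edge of a bad face whose two ends both have degree at least $3$ joins two vertices of degree in $\{3,\ldots,46\}$). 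Since $d(v)\ge 47$, the shared edge $vu$ forces $d(u)=2$, and the proof ends there. You only read off from the figure that each bad face ``contains a $2$-vertex,'' which is too weak, and this is what creates your spurious second case.

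That second case is where the genuine gap lies. Having assumed $d(u)\ge 3$, you try to derive a forbidden $3$-, $4$-, or $6$-cycle from coincidences among $a_1,a_2,a_3,b_1,b_2,b_3$, but the terminal subcase fails: if all six vertices are distinct, the union of the two $5$-faces is an $8$-cycle with the single chord $vu$, which contains cycles of lengths $5$, $5$, and $8$ only --- there is no $6$-cycle ``through $v$,'' and the extra information that the $2$-vertices are adjacent to $v$ does not produce one. So this branch ends without a contradiction. The branch is in fact vacuous for the reason above, but your proof neither establishes that nor supplies a valid argument inside it; as written, the proof is incomplete. The fix is simply to replace the whole case analysis by the one-line degree observation about edges of bad faces incident with $47^+$-vertices.
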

\begin{proof}
Suppose to the contrary that some $47^+$-vertex $v$ is incident with at least $\lfloor {d(v)\over2} \rfloor+1$ bad faces. 
Then some edge $e$ incident with $v$ is contained in two different bad faces. 
By the definition of bad faces, the end of $e$ other than $v$ has degree $2$. 
So this $2$-vertex is incident with two different bad faces, contradicting Lemma~\ref{lem:unbal2:bad-faces}.
\end{proof}

\subsection{Discharging}


We now define the initial charge at each vertex and each face.
For every $v\in V(G)$, let $\mu(v)=2d(v)-6$ and for every face $f\in F(G)$, let $\mu(f)=d(f)-6$. 
The total initial charge is negative since
\begin{align*}
\sum_{z\in V(G)\cup F(G)} \mu(z)
	=\sum_{v\in V(G)} (2d(v)-6)+\sum_{f\in F(G)} (d(f)-6) 
	=-6|V(G)|+6|E(G)|-6|F(G)|
	=-12	<0.
\end{align*}
The last equality holds by Euler's formula.

Recall that a bad face is a $5$-face and there are two non-adjacent $2$-vertices on that face. 
For each face $f$, let $W_f$ be a canonical boundary walk of $f$.
Recall that for any face $f$ and vertex $v$, $k_{f,v}$ is the number of triples $(e,v,e')$ such that $e,e' \in E(G)$ and $eve'$ is a subwalk of $W_f$.

Here are the discharging rules:

\begin{enumerate}[(R1)]

\item Let $v$ be a $47^+$-vertex.

\begin{enumerate}[({R1}A)]
\item $v$ sends charge $1$ to each adjacent vertex. 
\item $v$ sends charge $1$ to each incident bad face.
\item $v$ sends charge $\frac{3}{4}k_{f,v}$ to each incident face $f$ that is not bad. 
\end{enumerate}

\item Let $v$ be a vertex where $d(v)\in\{3, \ldots, 46\}$. 

\begin{enumerate}[(R2A)]
\item $v$ sends charge $1\over 2$ to each adjacent $2$-vertex. 
\item $v$ sends charge $t \over 2$ to each incident face $f$, where $t$ is the number of triples $(x,v,y)$ such that $x,y \in V(G)$, $xvy$ is a subpath in $W_f$, and either both $d(x),d(y)$ are at least $47$, or both $d(x),d(y) \in \{3, \ldots,46\}$. 
\item $v$ sends charge $t \over 4$ to each incident face $f$, where $t$ is the number of triples $(x,v,y)$ such that $x,y \in V(G)$, $xvy$ is a subpath in $W_f$, $d(x)=2$ and $d(y) \in \{3, \ldots,46\}$. 
\end{enumerate}

\item Let $f$ be a face. 

\begin{enumerate}[(R3A)]
\item $f$ sends charge $\frac{1}{2} k_{f,v}$ to each incident $2$-vertex $v$ that is adjacent to another $2$-vertex.
\item $f$ sends charge $\frac{1}{4} k_{f,v}$ to each incident $2$-vertex $v$ that is adjacent to a vertex $y$ with $d(y)\in\{3, \ldots, 46\}$. 
\end{enumerate}

\end{enumerate}

The discharging rule (R1) shows how a $47^+$-vertex distributes its initial charge, (R2) shows how a vertex with degree in $\{3, \ldots, 46\}$ sends charge, and (R3) shows how a face sends its charge. 
Note that by Lemma~\ref{lem:unbal2:vx-degree}, a face does not send charge to a $2$-vertex via both (R3A) and (R3B).
See Figure~\ref{fig:unbal2-rules} for an illustration of the discharging rules. 

\begin{figure}[h]
	\begin{center}
		\includegraphics{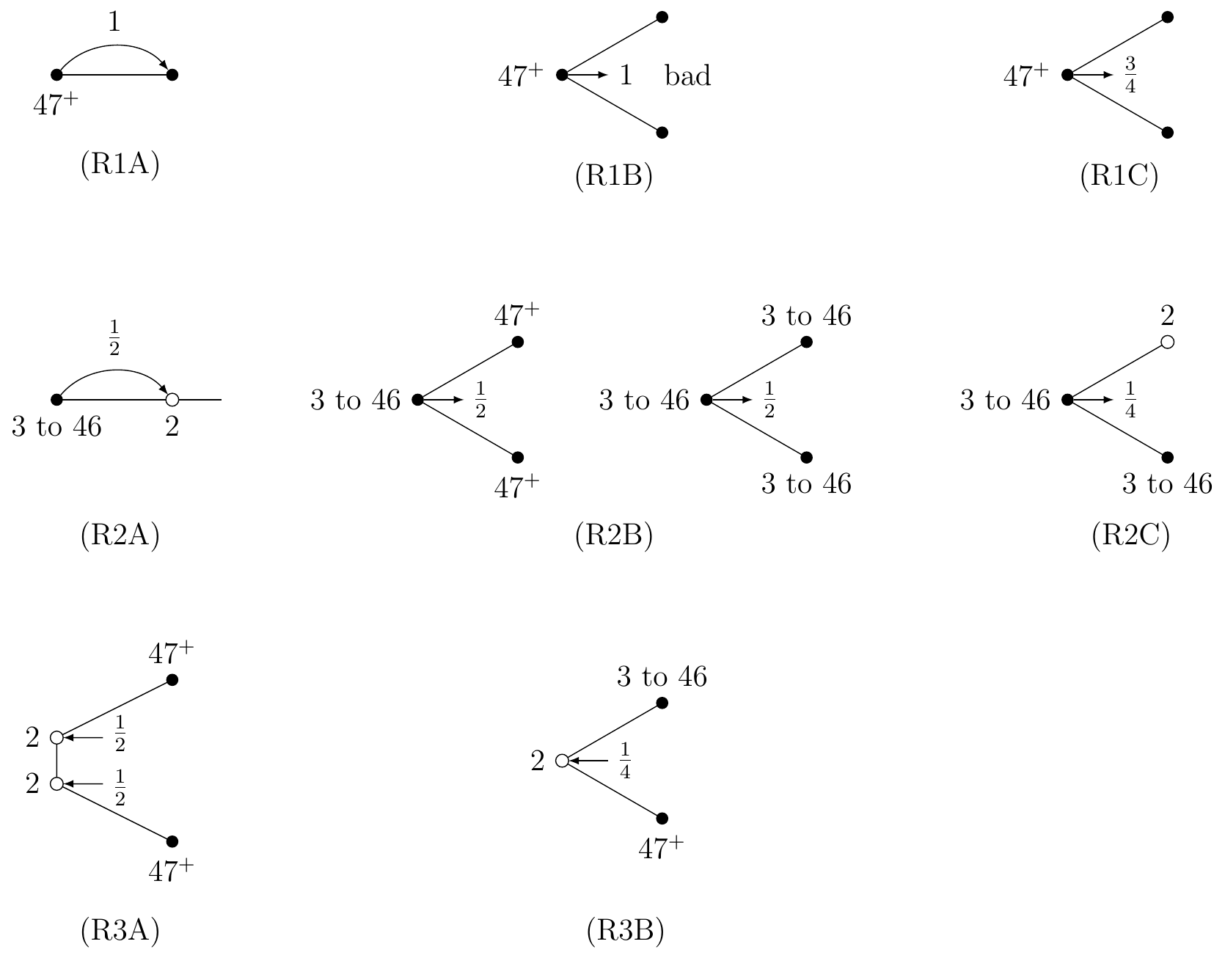}
	\end{center}
  \caption{Discharging rules.}
  \label{fig:unbal2-rules}
\end{figure}

The rest of this section will prove that the final charge $\mu^*(z)$ is nonnegative for each $z\in V(G)\cup F(G)$. 

\begin{claim}
Every vertex has nonnegative final charge. 
\end{claim}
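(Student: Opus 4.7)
The plan is to handle $v\in V(G)$ case by case on $d(v)$, with four ranges: $d(v)=2$, $d(v)=3$, $d(v)\in\{4,\ldots,46\}$, and $d(v)\geq 47$.

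For $d(v)=2$ the initial charge is $-2$, and Lemma~\ref{lem:unbal2:vx-degree} supplies at least one $47^+$-neighbor, so I would split on the other neighbor. If it is also $47^+$, the two applications of (R1A) contribute $+2$. If its degree lies in $\{3,\ldots,46\}$, then (R1A), (R2A), and (R3B) applied at the two incident faces (whose $k_{f,v}$ values sum to $d(v)=2$) contribute $1+\tfrac12+\tfrac14\cdot 2=2$. If it is another $2$-vertex, (R1A) together with (R3A) at the two incident faces contribute $1+\tfrac12\cdot 2=2$. In every subcase the final charge is exactly $0$.

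For $d(v)\geq 47$, each bad face satisfies $k_{f,v}=1$ at $v$ and $\sum_f k_{f,v}=d(v)$, so the total charge sent via (R1) is at most $d(v)+b(v)+\tfrac34(d(v)-b(v))=\tfrac74 d(v)+\tfrac14 b(v)$, where $b(v)$ denotes the number of bad faces incident with $v$. Hence the final charge is at least $\tfrac14(d(v)-24-b(v))$, which is nonnegative for $d(v)\geq 47$ by Lemma~\ref{lem:unbal2:bad-faces1} (tight at $d(v)=47$, $b(v)=23$).

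For $d(v)\in\{3,\ldots,46\}$ I would classify each of the $d(v)$ corners around $v$ by the degree classes ($2$, $\{3,\ldots,46\}$, or $47^+$) of its two endpoints. Letting $A_2, B, C, D, E$ denote the corner counts of types $[47^+,47^+]$, $[2,\{3,\ldots,46\}]$, $[2,47^+]$, $[\{3,\ldots,46\},47^+]$, and $[2,2]$ respectively, the incidence identities $n_2(v)=E+\tfrac{B+C}{2}$ and $n_{47}(v)=A_2+\tfrac{C+D}{2}$ collapse the contributions from (R1A), (R2A), (R2B), and (R2C) into a final charge of $\tfrac{3d(v)}{2}-6+A_2+\tfrac{3C}{4}+D$, which is manifestly nonnegative for $d(v)\geq 4$. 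The main obstacle is the tight subcase $d(v)=3$: the expression reduces to requiring $A_2+\tfrac{3C}{4}+D\geq\tfrac32$, and Lemma~\ref{lem:unbal2:vx-degree} gives $n_{47}(v)\geq 1$, so a short inspection of $n_{47}(v)\in\{1,2,3\}$ confirms the inequality, with equality precisely when $v$ has exactly one $47^+$-neighbor and two $2$-vertex neighbors (giving $C=2$ and $A_2=D=0$).
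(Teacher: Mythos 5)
Your proof is correct and follows essentially the same route as the paper's: the same case split on $d(v)$, the same appeals to Lemma~\ref{lem:unbal2:vx-degree} and Lemma~\ref{lem:unbal2:bad-faces1}, and for $d(v)\geq 47$ the identical estimate (your $\tfrac14\bigl(d(v)-24-b(v)\bigr)\geq 0$ is the paper's $2d(v)-6-d(v)-\lfloor d(v)/2\rfloor-\tfrac34\lceil d(v)/2\rceil\geq 0$ rearranged). The only difference is presentational: where the paper uses an informal weight-transfer argument for $4\leq d(v)\leq 46$ and a separate ad hoc check for $d(v)=3$, you carry out the same amortization explicitly via corner types to get the closed form $\tfrac{3d(v)}{2}-6+A_2+\tfrac{3C}{4}+D$, which handles both ranges uniformly and correctly identifies the tight configuration at $d(v)=3$.
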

\begin{proof}
Assume $v$ is a $2$-vertex. 
If $v$ is adjacent to two $47^+$-vertices, then $v$ receives charge $1$ from each of its neighbors by (R1A). 
Thus, $\mu^*(v)=-2+2\cdot{1}=0$. 
Note that $v$ is adjacent to at least one $47^+$-vertex by Lemma~\ref{lem:unbal2:vx-degree}.
If $v$ is adjacent to another $2$-vertex, then $v$ receives charge $2 \cdot \frac{1}{2}$ from the faces incident with $v$ by (R3A).
Thus, $\mu^*(v)=-2+2\cdot{1\over 2}+1=0$. 
Otherwise, $v$ is adjacent to a vertex of degree from $3$ to $46$, which sends charge $1\over 2$ to $v$ by (R2A). 
Also, $v$ receives charge $2 \cdot \frac{1}{4}$ from the faces incident with $v$ by (R3B). 
Thus, $\mu^*(v)=-2+2\cdot{1\over 4}+{1\over 2}+1=0$. 

Assume $v$ is a $47^+$-vertex. 
By (R1A), $v$ sends charge at most $d(v)$ to its adjacent vertices in total. 
By Lemma~\ref{lem:unbal2:bad-faces1}, $v$ is incident with at most $\lfloor{d(v)\over 2}\rfloor$ bad faces. 
Since $v$ sends charge $1$ to each of its incident bad faces by (R1B) and sends charge $3\over 4$ to each of its incident faces that are not bad by (R1C), the final charge $\mu^*(v)$ is at least $2d(v)-6-d(v)-\lfloor{d(v)\over 2}\rfloor-{3\over4}\cdot \lceil{d(v)\over 2}\rceil$, which is nonnegative since $d(v)\geq 47$. 

Assume $d(v)\in\{4, \ldots, 46\}$. 
We will show that $v$ has nonnegative final charge by using a weighting argument. 
Let $u_1, \ldots, u_{d(v)}$ be the neighbors of $v$ in some cyclic order. 
First give all neighbors of $v$ a weight of $1\over 2$.
If $u_i$ is not a $2$-vertex, then split the weight of $1\over 2$ it received from $v$, and transfer weight $1\over 4$ to each of the two faces that are incident with $vu_i$ (if $vu_i$ is incident with only one face, then transfer weight $1 \over 2$ to this face). 
Now, every $2$-vertex adjacent to $v$ and every face that is incident with $v$ have weight 
equal to the charge sent from $v$ in the discharging rules.
So the total charge sent from $v$ is at most the weight sent from $v$. 
Since $v$ has charge $2d(v)-6\geq {d(v)\over 2}$ when $d(v)\geq 4$, $v$ has nonnegative final charge. 

Assume $v$ is a $3$-vertex. 
If $v$ is adjacent to at least two $47^+$-vertices, which each sends charge $1$ to $v$ by (R1A), then $v$ is adjacent to at most one $2$-vertex. 
Thus, $\mu^*(v)\geq 0+2-4\cdot{1\over 2}=0$. 
If $v$ is adjacent to exactly one $47^+$-vertex, then $v$ sends charge at most $1\over 2$ to at most twice according to the discharging rules. 
In either case, $\mu^*(v)\geq 0+1-2\cdot{1\over 2}=0$. 
\end{proof}

\begin{claim}
Each $7^+$-face $f$ has nonnegative final charge.
\end{claim}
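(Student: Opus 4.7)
The plan is to analyze the final charge of a $7^+$-face $f$ via a per-corner accounting along the canonical boundary walk $W_f$. For each triple $(x,v,y)$ forming a corner of $W_f$, define
\[
c(x,v,y) \;:=\; 1 \;+\; (\text{charge sent from $v$ to $f$ across this corner}) \;-\; (\text{charge sent from $f$ to $v$ across this corner}),
\]
so that $\mu^*(f) = -6 + \sum_{\text{corners}} c$; the goal is to show $\sum c \geq 6$. Since $f$ is not bad (bad faces are $5$-faces), reading off the discharging rules gives $c = 7/4$ whenever $d(v) \geq 47$ via (R1C), $c \geq 1$ whenever $d(v) \in \{3, \ldots, 46\}$ via (R2B)/(R2C), and $c \in \{1/2, 3/4, 1\}$ when $d(v) = 2$ corresponding respectively to Subcases~6a, 6b, 6c of (R3A)/(R3B); crucially, $c = 1/2$ occurs only when the $2$-vertex is adjacent to another $2$-vertex.

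Next, I use Lemma~\ref{lem:unbal2:vx-degree} to control the placement of low-$c$ corners. Because a $2$-vertex has exactly two neighbors, both lying on $W_f$, no three consecutive vertices of $W_f$ can all be $2$-vertices and any consecutive pair of $2$-vertices must be flanked on both sides by $47^+$-vertices. I partition the corners of $W_f$ cyclically into $F_1, B_1, F_2, B_2, \ldots, F_t, B_t$, where each $F_i$ is a $47^+$-corner and each $B_i$ is a (possibly empty) maximal block of non-$47^+$-corners. Using the structural restriction above, each $B_i$ falls into one of four types: empty; a length-$2$ run of $2$-vertices (two corners of $c = 1/2$, block sum $1$); a single $2$-vertex with both neighbors of degree $\geq 47$ (one corner of $c = 1$); or a \emph{mixed} block containing at least one $\{3, \ldots, 46\}$-vertex, where any $2$-vertex is forced to the block boundary and falls into Subcase~6b with $c = 3/4$, while all remaining positions are $\{3, \ldots, 46\}$-corners with $c \geq 1$, giving block $c$-sum at least $|B_i| - 1/2$.

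To finish, I attribute each flanking $F_i$'s value $7/4$ equally to its two adjacent blocks, so every block $B_i$ receives an additional $7/4$ of charge and an additional $1$ of attributed length; this is a partition of the total of $d(f)$ corners. A direct check shows the attributed average $\frac{(\text{block $c$-sum}) + 7/4}{|B_i| + 1}$ is at least $\frac{11}{12}$ in each of the four types, with equality only for the length-$2$ $2$-run ($\frac{11/4}{3} = \frac{11}{12}$). Summing over blocks yields $\sum c \geq \frac{11}{12} d(f)$, so $\mu^*(f) \geq -6 + \frac{11}{12} \cdot 7 = \frac{5}{12} > 0$ whenever $d(f) \geq 7$. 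The degenerate case when $W_f$ contains no $47^+$-corner is treated separately: Lemma~\ref{lem:unbal2:vx-degree} then forces no $2$-vertex on $W_f$ either, so every corner has $c \geq 1$ and $\mu^*(f) \geq d(f) - 6 \geq 1$. The main obstacle is recognizing the length-$2$ $2$-run as the extremal configuration and pairing it with both flanking $47^+$-corners; a per-corner bound alone gives only $\sum c \geq d(f)/2$, which is insufficient for $d(f) = 7$, so the block-and-flank attribution is essential.
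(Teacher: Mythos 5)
Your argument is correct, and it rests on exactly the same two ingredients as the paper's proof: the per-corner bookkeeping via $k_{f,v}$ and Lemma~\ref{lem:unbal2:vx-degree}, which forces every $2$-vertex on $W_f$ to have a $47^+$-neighbor on $W_f$ and hence forbids three consecutive $2$-vertices. The difference is purely in how the $\frac{3}{4}$-per-corner income from $47^+$-vertices is routed to cover the $\frac{1}{2}$ or $\frac{1}{4}$ losses at $2$-vertex corners. The paper does a direct local transfer: each $47^+$-vertex hands $\frac{3}{8}k_{f,u}$ of its pulled weight to each adjacent $2$-vertex $u$ on $f$, so the face itself only tops up $\frac{1}{8}k_{f,u}$ per $2$-vertex, and the bound $\sum k_{f,u}\leq\lfloor\frac{2}{3}d(f)\rfloor$ finishes in one line. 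You instead decompose the boundary walk into maximal blocks between $47^+$-corners, classify the four block types, and run an averaging argument with the constant $\frac{11}{12}$, identifying the length-$2$ run of $2$-vertices as the extremal block. Both are valid; the paper's transfer is shorter and avoids the case analysis, while your block decomposition makes the extremal configuration explicit and would more readily show how much slack remains (you get $\mu^*(f)\geq\frac{5}{12}$ at $d(f)=7$, versus the paper's $\frac{1}{2}$). One cosmetic point: your references to ``Subcases~6a, 6b, 6c'' do not correspond to anything in the paper and should be replaced by direct references to (R3A), (R3B), and the case of a $2$-vertex with two $47^+$-neighbors.
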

\begin{proof}
We will show that $f$ has nonnegative final charge by using a weighting argument. 
Pull weight $\frac{3}{4}k_{f,v}$ from each $47^+$-vertex $v$ on $f$ (note that this corresponds to (R1C)), and transfer weight $\frac{3}{8}k_{f,v}$ to each $2$-vertex on $f$ that is adjacent to $v$. 
Each $2$-vertex on $f$ receives weight at least $\frac{3}{8}k_{f,v}$, since it must be adjacent to a $47^+$-vertex, which is on $f$, by Lemma~\ref{lem:unbal2:vx-degree}. 
Now if $f$ sends an additional weight of $\frac{1}{8}k_{f,v}$ to each $2$-vertex on $f$, then (R3) is satisfied. 
By Lemma \ref{lem:unbal2:vx-degree}, there cannot be three consecutive $2$-vertices on a boundary walk of $f$, so it follows that $\sum k_{f,v} \leq \lfloor \frac{2}{3}d(f) \rfloor$, where the sum is over all 2-vertices incident with $f$. 
Therefore, $\mu^*(f) \geq d(f)-6-\frac{1}{8} \sum k_{f,v} \geq d(f)-6-\frac{1}{8} \lfloor \frac{2}{3}d(f) \rfloor>0$ when $d(f) \geq 7$, where the sum is over all 2-vertices incident with $f$. 
\end{proof}

Note that there is no $6$-face since $G$ has no $1$-vertex and no $3$-, $4$-, $6$-cycles. 

\begin{claim}
Each $5$-face $f$ has nonnegative final charge.
\end{claim}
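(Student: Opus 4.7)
The plan is to verify $\mu^*(f)\ge 0$ by a finite case analysis on the degree pattern of the boundary of $f$. Since $G$ has no $3$- or $4$-cycles, the boundary walk of $f$ is a simple $5$-cycle $v_1v_2v_3v_4v_5$, so $k_{f,v_i}=1$ for every $i$. I would classify each boundary vertex as type $A$ (degree $\ge 47$), type $B$ (degree in $\{3,\dots,46\}$), or type $C$ (degree $2$), and write $(a,b,c)$ for the three counts, so $a+b+c=5$. Because a $2$-vertex has only two neighbors in $G$ and both must lie on $f$, Lemma~\ref{lem:unbal2:vx-degree} forces each type-$C$ vertex on $f$ to have at least one type-$A$ face-neighbor.

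Next I would tabulate, purely from the discharging rules, the net contribution of each boundary vertex to $f$ as a function of its own type and the types of its two face-neighbors. A type-$A$ vertex contributes $3/4$ via (R1C), upgraded to $1$ via (R1B) when $f$ is bad. A type-$B$ vertex contributes $1/2$ under (R2B) when its two face-neighbors are both type $A$ or both type $B$, contributes $1/4$ under (R2C) when one face-neighbor is type $C$ and the other is type $B$, and contributes $0$ in all other cases. The face $f$ sends $0$, $1/4$, or $1/2$ to an incident type-$C$ vertex via (R3A), (R3B) depending on whether its other face-neighbor is of type $A$, type $B$, or type $C$; the remaining possibilities are excluded by Lemma~\ref{lem:unbal2:vx-degree}. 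Since $\mu(f)=-1$, the goal reduces to showing that the sum of these contributions is at least $1$.

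Then I would enumerate, up to rotation and reflection, all boundary patterns compatible with the constraint that each type-$C$ vertex has a type-$A$ face-neighbor. Grouping by $(a,b,c)$ yields a finite list of roughly twenty patterns, and routine arithmetic shows that in every pattern except two the sum of contributions is at least $1$ using only (R1C). The two exceptional patterns are precisely $A\text{-}C\text{-}B\text{-}B\text{-}C$ (arising in $(a,b,c)=(1,2,2)$) and $A\text{-}C\text{-}A\text{-}C\text{-}C$ (arising in $(2,0,3)$); these are exactly the patterns in which every $A$-vertex has both face-neighbors of type $C$, matching the bad face configurations of Figure~\ref{fig:unbal2-bad}. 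In each of them, the upgrade from $3/4$ to $1$ per $A$-vertex supplied by (R1B) delivers exactly the missing charge ($1/4$ in the first, $1/2$ in the second), giving $\mu^*(f)=0$.

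The main obstacle is organizational rather than conceptual: one must keep the enumeration complete and recognize that the two configurations needing the (R1B) upgrade coincide exactly with those in the definition of a bad face, so that the discharging balances by design and no further structural lemmas are needed.
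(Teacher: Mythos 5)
Your proposal is correct and takes essentially the same route as the paper: a finite case analysis on the degree pattern of the (necessarily cyclic) boundary of the $5$-face, using Lemma~\ref{lem:unbal2:vx-degree} to force every incident $2$-vertex to have a $47^+$-face-neighbor, and verifying the charge arithmetic with the only deficient patterns being exactly the bad faces, where the upgrade from (R1C) to (R1B) supplies the missing $1/4$ or $1/2$. The paper merely organizes the same verification by the number and adjacency of incident $2$-vertices instead of a full pattern enumeration.
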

\begin{proof}
Since $G$ has no $1$-vertex, every 5-face is bounded by a cycle.
Let $v_1, v_2, v_3, v_4, v_5$ be the vertices of $f$ in some cyclic order. 

Assume $f$ is incident with at most one $2$-vertex, and assume $v_1$ is the $2$-vertex, if any. 
Note that $f$ sends charge $1\over 4$ to $v_1$ by (R3B) if it is a $2$-vertex. 
If at least two of $v_2, \ldots, v_5$ are $47^+$-vertices, then $f$ receives charge $3\over 4$ from each one by (R1C), thus, $\mu^*(f)\geq -1+2\cdot{3\over 4}-{1\over 4}>0$. 
If exactly one of $v_2, \ldots, v_5$ is a $47^+$-vertex, then without loss of generality we may assume it is $v_2$ by Lemma~\ref{lem:unbal2:vx-degree}.
Now $v_4$ sends charge $1\over 2$ to $f$ by (R2B), thus, $\mu^*(f)\geq -1+{3\over 4}+{1\over 2}-{1\over 4}=0$. 
If none of $v_2, \ldots, v_5$ is a $47^+$-vertex, then $v_1$ cannot be a $2$-vertex. 
Since both $v_3$ and $v_4$ send charge $1\over 2$ by (R2B), it follows that $\mu^*(f)\geq -1+2\cdot{1\over 2}=0$. 

Assume $f$ is incident with at least two $2$-vertices where two of them, say $v_2$ and $v_3$, are adjacent to each other. 
Note that $f$ sends charge $1\over 2$ to each of $v_2$ and $v_3$ by (R3A). 
By Lemma~\ref{lem:unbal2:vx-degree}, both $v_1$ and $v_4$ must be $47^+$-vertices. 
If $v_5$ is not a $2$-vertex, then $f$ is not a bad face, and $v_1, v_4, v_5$ send charge $3\over 4$, $3\over 4$, at least $1\over 2$, respectively, by (R1C) and (R2B).
Thus, $\mu^*(f)\geq -1+2\cdot{3\over 4}+{1\over 2}-2\cdot{1\over 2}=0$. 
If $v_5$ is a $2$-vertex, then $f$ is a bad face, and both $v_1$ and $v_4$ send charge $1$ each to $f$ by (R1B).
Thus, $\mu^*(f)\geq -1+2\cdot{1}-2\cdot{1\over 2}=0$. 

If $f$ is incident with at least two $2$-vertices and where no pair is nonadjacent, then $f$ is incident to exactly two $2$-vertices by Lemma~\ref{lem:unbal2:vx-degree}.
Thus, the only remaining case is when $f$ is incident with exactly two nonadjacent $2$-vertices, say $v_1$ and $v_3$.
Note that $f$ sends charge $1\over 4$ to each of $v_1$ and $v_3$ by (R3B). 
If $f$ is incident with at least two $47^+$-vertices, which each sends charge at least $3\over 4$ to $f$ by (R1), then $\mu^*(f)\geq -1+2\cdot{3\over 4}-2\cdot{1\over 4}=0$. 
Now $f$ must be incident with exactly one $47^+$-vertex because $f$ is incident with a $2$-vertex, and by Lemma~\ref{lem:unbal2:vx-degree} we know that $v_2$ must be the $47^+$-vertex. 
It follows that $d(v_4), d(v_5)\in\{3, \ldots, 46\}$, and therefore $f$ is a bad face. 
Now, $v_2, v_4, v_5$ send charge $1, {1\over 4}, {1\over 4}$, respectively, to $f$ by (R1B) and (R2C).
Thus, $\mu^*(f)\geq -1+1+2\cdot{1\over 4}-2\cdot{1\over 4}=0$. 

\end{proof}



\section{Unbalanced $3$-partitions}\label{sec:unbal3}

In this section, we prove the following theorem:

\begin{thm}\label{thmm:unbal3}
\thmunbalthree
\end{thm}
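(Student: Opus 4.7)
The proof naturally breaks into three stages: necessity (every cycle obstruction set must contain $C_3$ or $C_4$), sufficiency of $\{C_3\}$ (immediate from Gr\"otzsch's Theorem), and sufficiency of $\{C_4\}$ (a new discharging argument).

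For the necessity, I will construct, for each $D\ge 0$, a planar graph $G_D$ whose only cycles have length $3$ or $4$ and which is not $(0,0,D)$-colorable. The construction will follow the spirit of the gadgets $H(D,l)$ and $T(D)$ from Sections~\ref{sec:bal2} and~\ref{sec:unbal2}: many copies of a small $K_4$-type graph are glued along single shared vertices so that no cycle of length at least $5$ is introduced, and then a pigeonhole/counting argument on any hypothetical partition $(V_1,V_2,V_3)$ of $G_D$ produces a vertex of $V_3$ with more than $D$ neighbors inside $V_3$. Combined with the two sufficiency claims below and the easy fact that $\emptyset$ is not an obstruction set (planar graphs in general are not uniformly $(0,0,D)$-colorable), this isolates $\{C_3\}$ and $\{C_4\}$ as the only inclusion-wise minimal obstruction sets.

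Sufficiency of $\{C_3\}$ follows at once from Gr\"otzsch's Theorem: any triangle-free planar graph is $3$-colorable, hence $(0,0,0)$-colorable, hence $(0,0,D)$-colorable for every $D\ge 0$. For sufficiency of $\{C_4\}$, I will mirror the discharging strategy of Sections~\ref{sec:bal2} and~\ref{sec:unbal2}: take a vertex-minimum counterexample $G$ and fix a plane embedding, extract structural lemmas showing that certain low-degree configurations are reducible (their removal admits a $(0,0,D)$-coloring of the smaller $C_4$-free planar graph that extends back to $G$ after a local recoloring swap), assign initial charges $\mu(v)=2d(v)-6$ and $\mu(f)=d(f)-6$ (totalling $-12$ by Euler's formula), design discharging rules that move charge from large-degree vertices to small-degree vertices and short faces (noting that there are no $4$-faces since $G$ has no $C_4$), and verify by case analysis on vertex and face types that every object ends with nonnegative final charge, contradicting the negative total.

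The hard part will be the sufficiency of $\{C_4\}$. Having two fully independent color classes plus one defective class is much more rigid than the balanced $(D,D)$-coloring of Section~\ref{sec:bal2}, since a local recoloring can exploit only the slack in $V_3$ and cannot freely shift a vertex between the two independent classes. Consequently, the list of reducible configurations must be richer and the discharging rules more intricate than in Theorem~\ref{thm:bal2}, and it is likely that a weighting argument akin to the one used in Theorem~\ref{thm:unbal2} will be needed to absorb the charge deficit at $5$-faces incident with $2$-vertices.
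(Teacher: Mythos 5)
There is a genuine gap: the sufficiency of $\{C_4\}$, which is the entire technical content of this theorem, is announced but not proved. Your proposal correctly identifies the three-part architecture (necessity via a $K_4$-based gadget, sufficiency of $\{C_3\}$ via Gr\"otzsch, sufficiency of $\{C_4\}$ via discharging), and the first two parts match the paper: the paper's necessity gadget $X(D)$ is indeed built from $D+1$ copies of $K_4$ identified at a vertex, with the extra twist --- which your ``pigeonhole/counting argument'' does not supply --- that one must take \emph{three} such gadgets and join their cut-vertices by a triangle, since the gadget alone only shows its cut-vertex \emph{avoids} the defective color; a second mechanism is needed to force some vertex to receive it. But for the $\{C_4\}$ direction you give no value of $D$, no reducible configurations, no discharging rules, and no verification. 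In a discharging argument these details \emph{are} the proof; stating that one will ``extract structural lemmas'' and ``design discharging rules'' establishes nothing, and your own closing paragraph concedes that this is where all the difficulty lies.

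For comparison, the paper proves that $C_4$-free planar graphs are $(0,0,117)$-colorable using: (i) the lemma that every $119^-$-vertex has a $120^+$-neighbor; (ii) two extension lemmas for induced paths of $3$-vertices, one of which exploits the \emph{parity} of the path length to propagate the two independent colors $a,b$ along the path and derive a contradiction at the far end; (iii) a delicate reducibility lemma about ``annoying'' $5$-faces surrounded by annoying faces, whose proof splits on whether a certain edge $z_1z_2$ is present and uses both extension lemmas; and (iv) initial charges $\mu(z)=d(z)-4$ (not the $2d(v)-6$, $d(f)-6$ scheme you propose, though that difference is cosmetic) with a final accounting that sums the charge of each $3$-face \emph{together with} its incident $3$-vertices rather than verifying each object separately. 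Items (ii)--(iv) are exactly the ideas needed to cope with the rigidity you correctly identify --- that only the class $V_3$ has slack --- and none of them appears, even in outline, in your proposal. To repair the proof you would need to supply a concrete $D$, a concrete set of reducible configurations whose reducibility you verify, and a complete charge verification.
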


We will first show a necessary condition for cycle obstruction sets, and then show that it is sufficient afterwards. 

\begin{lem}\label{lem:unbal3:nec}
If a set $S$ of cycles is an obstruction set of unbalanced $3$-partitionable planar graphs, then either $C_3\in S$ or $C_4\in S$.
\end{lem}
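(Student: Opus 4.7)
The plan is to prove the contrapositive: assuming $C_3 \notin S$ and $C_4 \notin S$, I will exhibit for every nonnegative integer $D$ a planar graph $G_D$ whose only cycles have length $3$ or $4$ and which is not $(0, 0, D)$-colorable.  Such a $G_D$ contains no cycle of $S$, so the class of planar graphs forbidding the cycles in $S$ is not unbalanced $3$-partitionable, and $S$ is not an obstruction set.

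The building block is the following gadget.  For a vertex $x$, let $M(D;x)$ be the graph obtained from $D+1$ pairwise disjoint copies of $K_4$ by identifying one vertex in each copy with the common vertex $x$.  I claim that in any $(0,0,D)$-coloring of a graph containing $M(D;x)$, the vertex $x$ cannot receive the defective color.  Indeed, if $x$ does receive the defective color, then within each of the $D+1$ copies of $K_4$ containing $x$, the three other vertices are pairwise adjacent, so at most one of them lies in each of the two independent color classes, which forces at least one of them to also carry the defective color.  Summing over the $D+1$ copies yields at least $D+1$ defective-colored neighbors of $x$, violating the degree bound on the defective class.

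Now let $G_D$ be obtained from three pairwise vertex-disjoint copies $M(D;x_1)$, $M(D;x_2)$, $M(D;x_3)$ by adding the three edges of a triangle on $\{x_1,x_2,x_3\}$.  By the gadget property, each $x_i$ must receive one of the two independent colors in any $(0,0,D)$-coloring; yet $x_1, x_2, x_3$ are pairwise adjacent, so two of them must share the same independent class and be adjacent, a contradiction.  Hence $G_D$ admits no $(0,0,D)$-coloring.  Planarity of $G_D$ is clear since $K_4$ is planar and the gadgets are glued only at the cut-vertices $x_1, x_2, x_3$, which lets us embed the central triangle in the middle of the plane and each copy of $M(D; \cdot)$ in a disjoint region near the corresponding $x_i$.

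The point needing the most care is to verify that every cycle of $G_D$ has length exactly $3$ or $4$.  For this I would invoke the block decomposition: the only cut-vertices of $G_D$ are $x_1, x_2, x_3$, so the blocks are precisely the $3(D+1)$ copies of $K_4$ together with the central triangle.  Since every cycle of a graph lies in a single block, and each block of $G_D$ contains only cycles of length $3$ or $4$, the graph $G_D$ has no cycle of any other length.  This completes the proof and, combined with the existence of the two sufficient obstruction sets $\{C_3\}$ and $\{C_4\}$, yields the stated necessary condition.
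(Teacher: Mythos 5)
Your proposal is correct and follows essentially the same route as the paper: the gadget $M(D;x)$ is exactly the paper's $X_0(D;v)$ (forcing $x$ off the defective color because each $K_4-x$ is a triangle that must contain a defective vertex), and gluing three copies along a central triangle is exactly the paper's $X(D)$. The only difference is that you spell out the block-decomposition argument for why all cycles have length $3$ or $4$, which the paper leaves as ``not hard to see.''
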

\begin{proof}
Let $X_0(D; v)$ be the graph that is obtained from starting with $D+1$ pairwise disjoint copies of $K_4$ and picking one vertex from each copy of $K_4$ and identifying them into $v$. 
Now let $X(D)$ be the graph obtained from three copies of $X_0(D; v)$ and adding three edges between the three vertices that correspond to $v$. 
See Figure~\ref{fig:unbal3-tight} for an illustration of $X_0(2; v)$ and $X(2)$. 
Now in any $(0, 0, D)$-coloring of $X_0(D; v)$, the vertex $v$ cannot receive the color $3$. 
This is because each copy of $K_4-v$ must contain a vertex colored with $3$, and since there are $D+1$ copies, $v$ has $D+1$ neighbors with the same color, which is a contradiction. 
However, in any $(0, 0, D)$-coloring of $X(D)$, one vertex $v$ of the three cutvertices must receive the color $3$, and this shows that $X(D)$ is not $(0, 0, D)$-colorable. 
It is not hard to see that the only cycles in $X(D)$ have length either $3$ or $4$.
\end{proof}

\begin{figure}[ht]
	\begin{center}
  \includegraphics[scale=0.8]{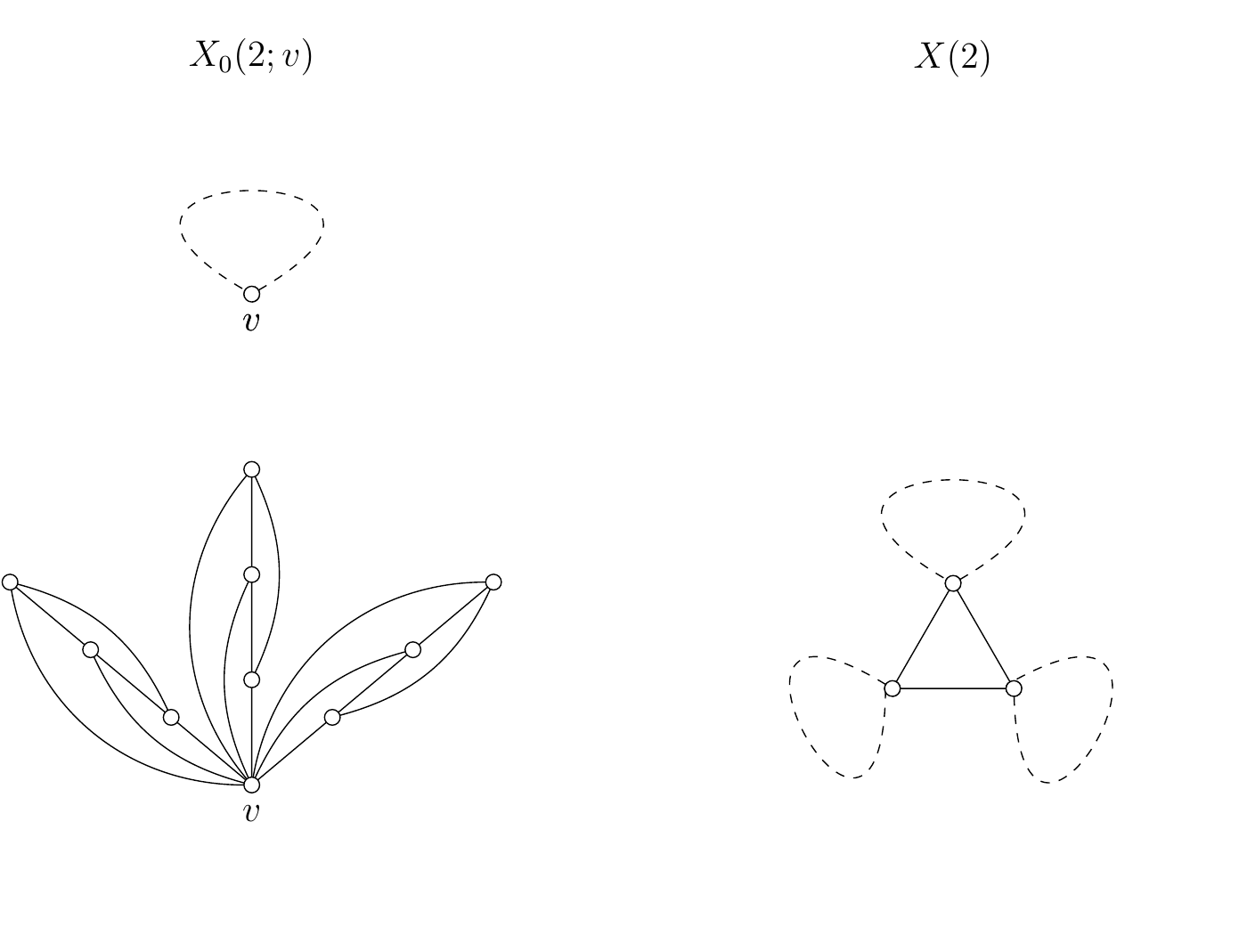}
  \caption{Graphs that are not $(0, 0, D)$-colorable}
  \label{fig:unbal3-tight}
	\end{center}
\end{figure}

If a planar graph does not contain $3$-cycles, then it is $3$-colorable, which is equivalent to $(0, 0, 0)$-colorable, by Gr\"otzsch's Theorem~\cite{1959Gr}, and thus it is unbalanced $3$-partitionable. 
This means that $\{C_3\}$ is an inclusion-wise  minimal obstruction set of unbalanced 3-partitionable planar graphs.
The remaining of this section proves Theorem \ref{thm:unbal3} below, which states that planar graphs with no $4$-cycles are unbalanced $3$-partitionable. 
Note that Lemma~\ref{lem:unbal3:nec} and Theorem~\ref{thm:unbal3} imply Theorem~\ref{thmm:unbal3}.

\begin{thm}\label{thm:unbal3}
Any planar graph with no $4$-cycles is $(0, 0, 117)$-colorable. 
\end{thm}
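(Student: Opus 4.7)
The plan is to argue by contradiction along the template of Section~\ref{sec:bal2}. Let $G$ be a counterexample to Theorem~\ref{thm:unbal3} with the minimum number of $3^+$-vertices, and subject to that the minimum number of edges, and fix a plane embedding; then $G$ is connected with no $1$-vertex. Write $a$ and $b$ for the two independent color classes, $c$ for the class with allowed defect $117$, and call a $c$-vertex \emph{saturated} when $117$ of its neighbors are already colored $c$. Assign initial charges $\mu(v)=2d(v)-6$ and $\mu(f)=d(f)-6$; by Euler's formula the total charge is $-12$, and the goal is to redistribute charge so that every element ends with nonnegative final charge.

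The structural phase will establish three reducibility lemmas parallel to Lemmas~\ref{lem:bal2:edge}--\ref{lem:bal2:terrible}. First, every edge of $G$ has an endpoint of degree at least some threshold $k$ on the order of $119$, proved by deleting the edge and, if the two endpoints receive the same color in the resulting coloring, recoloring one of them (the degree bound keeps them from both being saturated in $c$). Second, $G$ has no $3$-vertex: apply the gadget of Lemma~\ref{lem:bal2:3-vx}, replacing a $3$-vertex $v$ by three internally disjoint paths of length at least $2$ between its neighbors so that no $4$-cycle appears, then extend a $(0,0,117)$-coloring of the modified graph by checking, case by case on the three neighbors' colors, that some color in $\{a,b,c\}$ is legal at $v$. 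Third, every high-degree vertex $v$ is incident with at most $\min\{\lfloor d(v)/2\rfloor,\, d(v)-c_0\}$ \emph{terrible} $3$-faces (those containing a $2$-vertex), for a suitable constant $c_0$, proved by deleting the $2$-vertex $w$ of one such face $uvw$, noting that the only obstruction to extension is $v$ being $c$-saturated, and using pigeonhole over the other terrible $3$-faces to locate a $c$-colored $2$-vertex neighbor of $v$ that can be recolored into $\{a,b\}$, thereby freeing color $c$ for $w$.

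With these lemmas the discharging follows the Section~\ref{sec:bal2} scheme: each high-degree vertex sends $1$ to each adjacent $2$-vertex, $\tfrac{3}{2}$ and $1$ respectively to each incident terrible and non-terrible $3$-face, and a small amount to certain $5$-faces; the weighting trick (give each neighbor of $v$ weight $1$ and redistribute to incident faces unless the neighbor is a $2$-vertex) then verifies nonnegativity of every final charge once the terrible-face bound is in hand. The main obstacle will be the reducibility in the third lemma: in the balanced $(5,5)$-setting of Section~\ref{sec:bal2} the neighborhood of a saturated vertex is monochromatic and a single color flip suffices, but here $c$-saturation of $v$ coexists with $a$- and $b$-neighbors distributed arbitrarily around $v$, so the pigeonhole must handle every interleaving pattern and the two independence constraints must simultaneously leave room to recolor the chosen $2$-vertex. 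Calibrating $c_0$ so that this pigeonhole always succeeds is what pins down the specific constant $117$ in the theorem statement.
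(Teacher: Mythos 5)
Your plan transplants the Section~\ref{sec:bal2} machinery wholesale, but both reducibility claims it rests on fail in the $(0,0,117)$ setting, and the configurations you target are not the hard ones. First, the edge-deletion lemma breaks: if you delete $xy$ and the coloring of $G\setminus xy$ gives $\varphi(x)=\varphi(y)=a$, you cannot in general recolor $x$ or $y$, since a vertex in a defect-$0$ class may simultaneously have a $b$-neighbor and a saturated $c$-neighbor; the monochromatic-neighborhood flip that powers Lemma~\ref{lem:bal2:edge} has no analogue when two of the three classes must be independent sets. Second, and more seriously, $3$-vertices are \emph{not} reducible here. A $3$-vertex whose three neighbors receive $a$, $b$, and $c$ with the $c$-neighbor saturated admits no legal color, and no gadget of the Lemma~\ref{lem:bal2:3-vx} type rules this out; your ``case by case on the three neighbors' colors'' step simply has a failing case. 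Meanwhile $2$-vertices, around which your entire ``terrible face'' analysis is built, are trivially reducible with three colors available (a deleted $2$-vertex always has a color unused on its two neighbors), so the minimal counterexample has minimum degree $3$ and terrible $3$-faces never occur. Calibrating your constant $c_0$ cannot rescue this: the obstruction is structural, not quantitative.

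The paper instead keeps $3$-vertices and makes them the center of the argument: it uses vertex deletion (not edge deletion) to show every $119^-$-vertex has a $120^+$-neighbor (Lemma~\ref{lem:unbal3:vx-degree}), proves coloring-extension lemmas along induced paths of $3$-vertices (Lemmas~\ref{lem:unbal3:ext-coloring2} and~\ref{lem:unbal3:ext-coloring3}), isolates \emph{annoying} faces---those with exactly one $120^+$-vertex and all other incident vertices of degree $3$---as the genuinely hard local structure (Lemma~\ref{lem:unbal3:annoying}), assigns charge $d-4$ to both vertices and faces rather than $2d-6$ and $d-6$, and closes the discharging by bounding the combined final charge of each $3$-face together with its incident $3$-vertices rather than each element separately. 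None of these steps appears in your outline, and without something playing their role the proof does not go through.
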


In this section, let $G$ be a counterexample to Theorem~\ref{thm:unbal3} with the minimum number of vertices. 
Also, fix a plane embedding of $G$. 
It is easy to see that $G$ is connected and there are no $2^-$-vertices in $G$.

From now on, given a (partially) $(0, 0, 117)$-colored graph, we will let $a$, $b$, $c$ be the color of the color class that is allowed to have maximum degree at most $0$, $0$, $117$, respectively, and we say a vertex colored with $c$ is {\it saturated} if it already has $117$ neighbors colored with $c$.

\subsection{Structural lemmas}

\begin{lem}\label{lem:unbal3:vx-degree}
A $119^-$-vertex is adjacent to a $120^+$-vertex. 
\end{lem}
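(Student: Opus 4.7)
The plan is to mimic the proof of Lemma~\ref{lem:unbal2:vx-degree}, but because we now have three color classes (with both $a$ and $b$ required to form independent sets, while only $c$ is allowed maximum degree $117$), the optimization should be lexicographic rather than single-objective. Specifically, I would suppose for contradiction that some vertex $v$ with $d(v) \leq 119$ has every neighbor of degree at most $119$. By the minimality of $G$, the graph $G - v$ admits a $(0,0,117)$-coloring, and I would choose such a coloring $\varphi$ to first maximize the number of $a$-colored neighbors of $v$ and, subject to that, to maximize the number of $b$-colored neighbors of $v$.

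Next I would argue that since $\varphi$ does not extend to $G$, the vertex $v$ must have at least one $a$-neighbor (otherwise color $v$ with $a$) and at least one $b$-neighbor (otherwise color $v$ with $b$). Consequently $v$ has at most $d(v) - 2 \leq 117$ $c$-neighbors, so the only obstruction to extending by $c$ is that some $c$-neighbor $u$ of $v$ is saturated in $G - v$, meaning $u$ has exactly $117$ $c$-neighbors in $G - v$.

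The key step is to use the maximality of $\varphi$ to force such a saturated $u$ to also see both an $a$- and a $b$-colored vertex among its neighbors in $G - v$. If $u$ had no $a$-neighbor, then recoloring $u$ from $c$ to $a$ would yield another valid $(0,0,117)$-coloring of $G - v$ with strictly more $a$-neighbors of $v$, contradicting the primary maximization. Similarly, if $u$ had no $b$-neighbor, then recoloring $u$ from $c$ to $b$ would preserve the number of $a$-neighbors of $v$ while strictly increasing the number of $b$-neighbors, contradicting the secondary maximization. Hence $u$ must have at least $117 + 1 + 1 = 119$ neighbors in $G - v$, and since $v$ is also a neighbor of $u$ in $G$, we would conclude $d(u) \geq 120$, contradicting the assumption that every neighbor of $v$ has degree at most $119$.

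I expect the only delicate point to be verifying that each proposed recoloring of $u$ from $c$ to a new color truly preserves $(0,0,117)$-validity. This should be immediate: the former $c$-neighbors of $u$ only lose a $c$-neighbor (which never violates any degree bound), and by the assumption that $u$ has no neighbor of its new color, $u$ itself creates no monochromatic edge in the $a$- or $b$-class. This careful bookkeeping is precisely what makes the lexicographic choice of $\varphi$ the right refinement of the single-objective maximization used in Lemma~\ref{lem:unbal2:vx-degree}.
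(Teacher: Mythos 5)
Your proof is correct and follows essentially the same route as the paper: take a $(0,0,117)$-coloring of $G-v$ that is extremal with respect to the colors on $N(v)$, rule out a saturated $c$-colored neighbor by recoloring it with a color of $\{a,b\}$ absent from its neighborhood, and then extend to $v$. The only cosmetic difference is that the paper uses the single objective of minimizing the number of $c$-colored neighbors of $v$ (so either recoloring of $u$ gives the contradiction directly), whereas your lexicographic maximization of $a$- then $b$-neighbors accomplishes the same counting $117+1+1+1>119$.
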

\begin{proof}
Suppose to the contrary that a $119^-$-vertex $v$ is adjacent to only $119^-$-vertices. 
Since $G-v$ is a graph with fewer vertices than $G$, there is a $(0, 0, 117)$-coloring $\varphi$ of $G-v$. 
We further assume that $\varphi$ minimizes the number of neighbors of $v$ colored with $c$.
If there exists a neighbor $u$ of $v$ in $G$ such that $\varphi(u)=c$ and $u$ is saturated, then at most one neighbor of $u$ in $G-v$ has a color in $\{a, b\}$, so we can recolor $u$ to be a color in $\{a,b\}$ that does not appear in its neighborhood in $G-v$, contradicting the minimality of $\varphi$.
Hence no neighbor $u$ of $v$ with color $c$ is saturated.
If no neighbor of $v$ is colored with a color in $\{a, b\}$, then we can extend $\varphi$ to all of $G$ by coloring $v$ with a color in $\{a,b\}$ that does not appear in the neighborhood of $v$ in $G$, contradicting that $G$ is a counterexample.
So both $a$ and $b$ appear in the neighborhood of $v$ in $G$, and thus there are at most $117$ neighbors of $v$ colored with $c$.
Since no neighbor of $v$ with color $c$ is saturated, we can extend $\varphi$ to all of $G$ by coloring $v$ with color $c$, a contradiction.
\end{proof}


\begin{lem}\label{lem:unbal3:ext-coloring2}
Let $X$ be a set of $3$-vertices of $G$ such that the subgraph of $G$ induced on $X$ is a path $v_1v_2\ldots v_{k}$ where $k\geq 2$.
If $x$ and $y$ are the neighbors of $v_k$ in $G-X$,
then $c \in \{\varphi(x),\varphi(y)\}$ and $\varphi(x) \neq \varphi(y)$ for every $(0,0,117)$-coloring $\varphi$ of $G-X$.
Moreover, the vertex in $\{x, y\}$ that receives the color $c$ must be a $116^+$-vertex.
\end{lem}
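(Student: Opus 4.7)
The plan is to argue by contradiction. Suppose $\varphi$ is a $(0,0,117)$-coloring of $G-X$ that violates the conclusion of the lemma; I would extend $\varphi$ to a $(0,0,117)$-coloring of $G$, contradicting the minimality of $G$. The failure of the conclusion splits into three (overlapping) cases: (I) $c \notin \{\varphi(x), \varphi(y)\}$; (II) $\varphi(x) = \varphi(y)$; (III) $\{\varphi(x), \varphi(y)\} = \{\alpha, c\}$ for some $\alpha \in \{a, b\}$ but the $c$-colored vertex in $\{x, y\}$ has degree at most $115$ in $G$.

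The extension proceeds by coloring $v_1, v_2, \ldots, v_{k-1}, v_k$ in this order. Two basic facts drive the greedy coloring. First, every vertex of $X$ is a $3$-vertex by hypothesis, so whenever some $v_j$ is colored $c$ it has at most three $c$-colored neighbors and can never be $c$-saturated. Second, for each $1 \leq i \leq k-1$, when we color $v_i$ exactly two of its neighbors have already been colored: the two outside-$X$ neighbors if $i=1$, and $v_{i-1}$ together with its unique outside-$X$ neighbor $z_i$ if $i>1$. Each colored neighbor forbids at most one color from $\{a,b,c\}$ for $v_i$ (its own color if it is $a$ or $b$, or $c$ only if it is currently $c$-saturated), so at most two colors are forbidden and at least one is available. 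Therefore the greedy procedure successfully colors $v_1, \ldots, v_{k-1}$.

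The main obstacle is the analysis at $v_k$, whose three neighbors $v_{k-1}, x, y$ are all colored when its turn arrives. For every color in $\{a,b,c\}$ to be forbidden, $\{a,b\}$ must appear among $\{\varphi(v_{k-1}), \varphi(x), \varphi(y)\}$, and at least one $c$-colored neighbor of $v_k$ must be currently $c$-saturated. Since $v_{k-1}$ is a $3$-vertex and thus never saturated, the saturated neighbor lies in $\{x,y\}$, so $c \in \{\varphi(x), \varphi(y)\}$. Furthermore $\{a,b\} \not\subseteq \{\varphi(v_{k-1}), c, c\}$, so we cannot have $\varphi(x) = \varphi(y) = c$. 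Hence the only blocking configuration at $v_k$ is $\{\varphi(x), \varphi(y)\} = \{\alpha, c\}$ with the $c$-colored vertex currently $c$-saturated (which forces its degree in $G$ to be at least $117$) and $\varphi(v_{k-1})$ equal to the remaining element of $\{a,b\}$.

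To conclude, I would verify that no violation case allows this blocking configuration. In case (I) the $c$-colored vertex does not appear in $\{x,y\}$ at all; in case (II) either $c \notin \{\varphi(x), \varphi(y)\}$ or $\varphi(x) = \varphi(y) = c$, both excluded above; and in case (III) the $c$-colored vertex in $\{x,y\}$ has degree at most $115$ in $G$, so it has at most $115$ $c$-colored neighbors at any stage of the coloring and cannot reach the $117$ needed for saturation. Thus in every violation case $v_k$ admits at least one valid color, $\varphi$ extends to a $(0,0,117)$-coloring of $G$, and we reach the desired contradiction.
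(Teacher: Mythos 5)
Your proof is correct and takes essentially the same approach as the paper's: greedily extend the coloring along $v_1,\dots,v_{k-1}$ (each such vertex has only two colored neighbors, so a color is always available) and then check that the only configuration blocking $v_k$ is exactly the one the lemma asserts, with a saturated $c$-neighbor forcing the degree bound. The only cosmetic difference is that your greedy rule forbids $c$ only when a colored neighbor is currently saturated, whereas the paper simply avoids the colors of both already-colored neighbors; both variants go through.
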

\begin{proof}
Let $\varphi$ be a $(0,0,117)$-coloring of $G-X$ and let $x'$ and $y'$ be the neighbors of $v_1$ in $G-X$.
For each integer $i$ with $2 \leq i \leq k-1$, let $u_i$ be the vertex in $G-X$ adjacent to $v_i$.
First we extend $\varphi$ to a $(0,0,117)$-coloring of $G-v_k$ by defining $\varphi(v_1) \in \{a,b,c\}\setminus\{\varphi(x'),\varphi(y')\}$ and $\varphi(v_i) \in \{a,b,c\}-\{\varphi(v_{i-1}),\varphi(u_i)\}$ for each $i\in\{2, \ldots, k-1\}$.
If $\varphi(x)=\varphi(y)$, then we can extend $\varphi$ to be a $(0,0,117)$-coloring of $G$ by further defining $\varphi(v_k)$ to be an element in $\{a,b,c\}\setminus\{\varphi(v_{k-1}),\varphi(x)\}$.
This proves $\varphi(x) \neq \varphi(y)$.
If either $c \not \in \{\varphi(x),\varphi(y)\}$ or the vertex in $\{x, y\}$ with the color $c$ is a $115^-$-vertex, then by defining $\varphi(v_k)=c$,
we extended $\varphi$ to a $(0,0,117)$-coloring of $G$ since the degree of $v_{k-1}$ is $3$.
Therefore, $c \in \{\varphi(x),\varphi(y)\}$.
\end{proof}

\begin{lem}\label{lem:unbal3:ext-coloring3}
Let $X$ be a set of $3$-vertices of $G$ such that the subgraph of $G$ induced on $X$ is a path $v_1v_2\ldots v_{2k}$ on an even number of vertices.
Let $u_i$ be a neighbor of $v_i$ in $G-X$ for each $i$ with $1 \leq i \leq 2k$.
Let $x$ and $y$ be the neighbor of $v_1$ and $v_{2k}$, respectively in $G-X$ other than $u_1$ and $u_{2k}$.
If there exists a $(0,0,117)$-coloring $\varphi$ of $G-X$ such that $\varphi(u_i)=c$ for every $i$ with $1 \leq i \leq 2k$, then $\varphi(x)=\varphi(y)$.
\end{lem}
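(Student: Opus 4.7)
The plan is to argue by contradiction: suppose $\varphi(x) \neq \varphi(y)$, and then extend $\varphi$ to a $(0,0,117)$-coloring of all of $G$, contradicting the assumption that $G$ is a counterexample to Theorem~\ref{thm:unbal3}. Since $\varphi$ is already defined on $V(G) \setminus X$, the task reduces to assigning colors to $v_1, \ldots, v_{2k}$.

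The central observation is that if each $v_i$ is colored from $\{a, b\}$, then no vertex of $G - X$ ever gains a $c$-colored neighbor, so the degree bound for the color-$c$ class --- the only ``global'' constraint of a $(0,0,117)$-coloring --- is automatically preserved by the extension. Likewise, every edge $v_i u_i$ has endpoints of different colors, because $u_i$ is colored $c$ by hypothesis. Consequently the extension is legal as soon as the following edges are not monochromatic in $\{a,b\}$: the internal path edges $v_i v_{i+1}$ for $1 \leq i \leq 2k-1$, the edge $v_1 x$, and the edge $v_{2k} y$.

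The natural construction is the alternating scheme: fix an ordered pair $(\alpha, \beta)$ with $\{\alpha, \beta\} = \{a, b\}$ and set $\varphi(v_i) = \alpha$ for odd $i$ and $\varphi(v_i) = \beta$ for even $i$. The alternation automatically handles all internal edges $v_i v_{i+1}$, and because $2k$ is even we have $\varphi(v_1) = \alpha$ and $\varphi(v_{2k}) = \beta$. It therefore remains to choose $(\alpha, \beta)$ so that $\alpha \neq \varphi(x)$ and $\beta \neq \varphi(y)$ whenever those colors lie in $\{a, b\}$; if $\varphi(x) = c$ (resp.\ $\varphi(y) = c$) then the corresponding constraint is vacuous since $\alpha, \beta \in \{a,b\}$.

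The key step is a short case analysis using $\varphi(x) \neq \varphi(y)$. If $\varphi(x), \varphi(y) \in \{a, b\}$, they are distinct, so I can take $\alpha = \varphi(y)$ and $\beta = \varphi(x)$, making both constraints strict. If exactly one of $\varphi(x), \varphi(y)$ equals $c$, then only one of the two constraints is active, and either ordering of $\{a,b\}$ satisfying it works; the case $\varphi(x) = \varphi(y) = c$ is ruled out by the assumption $\varphi(x) \neq \varphi(y)$. In every case we obtain a $(0,0,117)$-coloring of $G$, the desired contradiction. I do not foresee any substantive obstacle: the proof is essentially the parity of $2k$ combined with a two-element pigeonhole, and no appeal to the structural lemmas beyond the hypothesis $\varphi(u_i) = c$ is needed.
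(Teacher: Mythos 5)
Your proof is correct and is essentially the paper's argument: both color the path alternately from $\{a,b\}$ and use the evenness of $2k$ to force the two endpoints of the path to receive different colors, so the extension succeeds unless $\varphi(x)=\varphi(y)$. The only cosmetic difference is that the paper first invokes Lemma~\ref{lem:unbal3:ext-coloring2} to conclude $\varphi(x)\neq c$ and then argues directly, whereas you frame it as a contradiction and dispose of the cases $\varphi(x)=c$ or $\varphi(y)=c$ by hand.
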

\begin{proof}
By Lemma \ref{lem:unbal3:ext-coloring2}, we may assume $\varphi(x) \neq \varphi(u_1)=c$.
Define $\varphi(v_1)=\{a,b\}\setminus\{\varphi(x)\}$ and $\varphi(v_i)=\{a,b\}\setminus\{\varphi(v_{i-1})\}$ for every $2 \leq i \leq 2k$.
Since $\lvert X \rvert$ is even, $\varphi(v_1) \neq \varphi(v_{2k})$.
That is, $\varphi(v_{2k})=\varphi(x)$.
As this must not extend $\varphi$ to be a $(0,0,117)$-coloring of $G$, $\varphi(y)=\varphi(v_{2k})$.
Therefore $\varphi(y)=\varphi(x)$.
\end{proof}

A face $f$ is {\it annoying} if exactly one vertex incident with $f$ is a $120^+$-vertex and all other vertices incident with $f$ are $3$-vertices. 
We say that two faces are {\it adjacent} if they share at least one edge.

\begin{lem}\label{lem:unbal3:annoying}
If an annoying $5$-face $f$ is adjacent to only annoying $3$-faces and annoying $5$-faces, then $f$ is adjacent to at most two $3$-faces. 
\end{lem}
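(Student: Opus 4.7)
The plan is to argue by contradiction: assume that an annoying $5$-face $f$ with boundary $v_0v_1v_2v_3v_4$ (where $v_0$ is the unique $120^+$-vertex and $v_1,v_2,v_3,v_4$ are $3$-vertices) is adjacent to at least three $3$-faces, all of which are annoying by hypothesis. Label the edges of $f$ cyclically by $e_i = v_{i-1}v_i$ for $i\in\{1,2,3,4\}$ and $e_5 = v_4v_0$.

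The first step is purely structural and uses the no-$4$-cycle assumption to locate the three $3$-face edges. If two consecutive edges $e_i,e_{i+1}$ of $f$ share a $3$-vertex $v_i$ and are both incident with $3$-faces, then the third vertex of each such $3$-face must coincide with the unique non-$f$ neighbor of $v_i$; that common vertex together with $v_{i-1},v_i,v_{i+1}$ produces a $4$-cycle in $G$, a contradiction. The only consecutive pair of edges of $f$ sharing the $120^+$-vertex $v_0$ rather than a $3$-vertex is $(e_5,e_1)$, so the three $3$-face edges must be exactly $\{e_1,e_3,e_5\}$. Annoyance then forces the three $3$-faces to be $v_0v_1v_1'$, $v_2v_3w$, and $v_0v_4v_4'$, where $v_1',v_4'$ are $3$-vertices adjacent to $v_0$ and $w$ is a $120^+$-vertex equal to the common third neighbor of $v_2$ and $v_3$. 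The remaining two faces adjacent to $f$ through $e_2$ and $e_4$ are annoying $5$-faces, and their boundaries are forced to be $v_1v_2wa_1v_1'$ and $v_3v_4v_4'b_1w$ for some $3$-vertices $a_1,b_1$.

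The second step invokes the extension lemmas and minimality of $G$. Let $X=\{v_1,v_2,v_3,v_4\}$, which induces a path in $G$. Take a $(0,0,117)$-coloring of $G-\{v_2,v_3\}$ (existing by minimality) and restrict to $G-X$, producing a coloring $\varphi$ satisfying $\varphi(w)=c$ by Lemma~\ref{lem:unbal3:ext-coloring2} applied to $\{v_2,v_3\}$, and $\varphi(v_0)=c$ by Lemma~\ref{lem:unbal3:ext-coloring2} applied to $X$. Now Lemma~\ref{lem:unbal3:ext-coloring3} applied to the path $v_1v_2v_3v_4$ with $u_1=u_4=v_0$ and $u_2=u_3=w$ yields $\varphi(v_1')=\varphi(v_4')$; call this common color $\alpha$ and let $\beta$ be the other color in $\{a,b\}$. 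Checking proper constraints, the assignment $(v_1,v_2,v_3,v_4)=(\beta,\alpha,\beta,c)$ extends $\varphi$ to a $(0,0,117)$-coloring of $G$ provided $v_0$ is not $c$-saturated in $\varphi$, and $(v_1,v_2,v_3,v_4)=(\beta,\alpha,c,\beta)$ works provided $w$ is not $c$-saturated; either way we contradict that $G$ is a counterexample.

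The main obstacle is the corner case in which $\varphi$ saturates both $v_0$ and $w$ simultaneously. I expect to dispose of this by exploiting the slack afforded by $\deg(v_0),\deg(w)\geq 120>117$: each of $v_0,w$ must have a neighbor colored in $\{a,b\}$, and by choosing the coloring of $G-\{v_2,v_3\}$ differently (for example, swapping colors along a suitable Kempe-style chain, or altering the choice so that $\varphi(w)\in\{a,b\}$ and falling back on a direct extension such as $(v_1,v_2,v_3,v_4)=(b,c,c,a)$ that introduces no new $c$-edges at $v_0$ or $w$) one produces a coloring in which the extension succeeds. The verification that at least one such adjustment is always available is where the bulk of the bookkeeping will lie.
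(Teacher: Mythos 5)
Your structural step (locating the three $3$-faces on the edges $e_1,e_3,e_5$, forcing the degrees of $v_1',v_4',w$, and identifying the two annoying $5$-faces through $e_2$ and $e_4$) matches the paper, as does the use of Lemmas~\ref{lem:unbal3:ext-coloring2} and~\ref{lem:unbal3:ext-coloring3} to produce a coloring $\varphi$ of $G-X$ with $\varphi(v_0)=\varphi(w)=c$ and $\varphi(v_1')=\varphi(v_4')\in\{a,b\}$. The gap is precisely the ``corner case'' you defer. Any extension of $\varphi$ to the path $v_1v_2v_3v_4$ using only $\{a,b\}$ is killed by the parity obstruction (this is the content of Lemma~\ref{lem:unbal3:ext-coloring3}), so every candidate extension must put $c$ on some $v_i$; each $v_i$ is adjacent to $v_0$ or to $w$, both already colored $c$, so every extension adds a $c$-neighbor to one of them. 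Nothing you have established prevents both $v_0$ and $w$ from being saturated in $\varphi$, and your proposed remedies do not hold up: Lemma~\ref{lem:unbal3:ext-coloring2} shows that \emph{every} $(0,0,117)$-coloring of $G-\{v_2,v_3\}$ has $\varphi(w)=c$, so there is no alternative choice with $\varphi(w)\in\{a,b\}$; Kempe-type swaps are not controlled for improper colorings; and the fallback $(v_1,v_2,v_3,v_4)=(b,c,c,a)$ is dead on arrival, since $v_1=b$ forces $\varphi(v_1')=a$ while $v_4=a$ forces $\varphi(v_4')=b$, contradicting $\varphi(v_1')=\varphi(v_4')$ (and it also charges two new $c$-neighbors to $w$, not zero).

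The missing idea is that the contradiction should come not from extending $\varphi$ into $G$ but from forcing $\varphi(v_1')\neq\varphi(v_4')$ by an \emph{adjacency}, against the equality supplied by Lemma~\ref{lem:unbal3:ext-coloring3}. The paper achieves this by looking one layer further out. Let $z_1,z_2$ be the third neighbors of $v_1'$ and $v_4'$ on the two annoying $5$-faces (your $a_1,b_1$). If $z_1z_2\notin E(G)$, one colors the smaller graph $H=(G-\{v_1,v_2,v_3,v_4\})\cup v_1'v_4'$, which is still planar with no $4$-cycle precisely because $z_1z_2$ is not an edge; the added edge $v_1'v_4'$ gives $\varphi(v_1')\neq\varphi(v_4')$, contradicting Lemma~\ref{lem:unbal3:ext-coloring3}. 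If $z_1z_2\in E(G)$, one instead deletes $\{v_1,v_2,v_3,v_4,v_1',v_4'\}$ and runs the identical argument on the longer induced path of $3$-vertices, with $z_1,z_2$ now in the roles of $v_1',v_4'$ and the edge $z_1z_2$ supplying the contradiction. Without one of these two devices (or an equivalent), your argument does not close.
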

\begin{proof}
Let $f=wx'xyy'$ where $w$ is the $120^+$-vertex on $f$ and $x',x,y,y'$ are all $3$-vertices.
For $e\in\{wx', x'x, xy, yy', y'w\}$, let $f_e$ be the face incident with $e$ other than $f$. 

Suppose to the contrary that $f$ is adjacent to three annoying $3$-faces. 
Since $G$ has no $4$-cycles, two $3$-faces cannot share an edge, and two faces incident with the same $3$-vertex must share an edge. 
Since $x', x, y, y'$ are all $3$-vertices, this implies that $f_{xy}, f_{wx'}, f_{y'w}$ must be the annoying $3$-faces adjacent to $f$. 

Let $f_{xy}=xyz$ so that $z$ is a $120^+$-vertex and the common neighbor of $x$ and $y$.
Also, let $f_{wx'}=wx'x$ and $f_{y'w}=wy'y_1$ so that $x_1$ and $y_1$ is the common neighbor of $w, x'$ and $w,y'$, respectively, which must be a $3$-vertex. 
Note that $z, x_1, y_1$ must be all distinct since otherwise that would imply the existence of a $4$-cycle. 

Let $z_1$ be the common neighbor of $z$ and $x_1$. 
Since $f_{xx'}$ is an annoying $5$-face, $z_1$ must be a $3$-vertex. 
Also, $z_1 \not \in \{z, x, y, x', y', x_1, y_1, w\}$ since there are no $4$-cycles. 
Let $z_2$ be the common neighbor of $z$ and $y_1$.
Similarly, $z_2$ is a $3$-vertex and $z_2\not\in\{z, x, y, x', y', x_1, y_1, w\}$. 
Note that $z_1 \neq z_2$, since $z$ has degree at least $120$, $f_{xx'}$ and $f_{yy'}$ are 5-faces and $xyz$ is a 3-face.
Note that the subgraph of $G$ induced on $\{x_1,x',x,y,y',y_1\}$ is a path.
See Figure~\ref{fig:unbal3:annoying} for an illustration.




Suppose that $z_1z_2$ is not an edge of $G$.
Set $H=(G-\{x, y, x', y'\})\cup x_1y_1$. 
Note that $H$ is still a plane graph with no $4$-cycles, since $z_1z_2$ is not an edge. 
Since $H$ is a graph with fewer vertices than $G$, there is a $(0, 0, 117)$-coloring $\varphi$ of $H$. 
Note that $\varphi$ is a $(0,0,117)$-coloring of $G-\{x,y,x',y'\}$.
If $\varphi(z) \neq c$, then let 
\begin{align*}
\varphi(x)&=c&
\varphi(x')&\in\{a, b, c\}\setminus\{\varphi(x_1), \varphi(w)\}\\
\varphi(y')&\in\{a, b, c\}\setminus\{\varphi(w), \varphi(y_1)\}&
\varphi(y)&\in\{a, b, c\}\setminus\{\varphi(z), \varphi(y')\}
\end{align*}
 to extend $\varphi$ to all of $G$. 
Hence $\varphi(z)=c$.
Since $w$ is a $120^+$-vertex, by Lemma \ref{lem:unbal3:ext-coloring2}, $\varphi(w)=c$ and $\{\varphi(x_1),\varphi(y_1)\} \subseteq \{a,b\}$.
Since $\varphi(z)=\varphi(w)=c$, Lemma \ref{lem:unbal3:ext-coloring3} implies that $\varphi(x_1)=\varphi(y_1)$.
However, $\{\varphi(x_1),\varphi(y_1)\} \subseteq \{a,b\}$ and $x_1y_1$ is an edge of $H$, so $\varphi(x_1) \neq \varphi(y_1)$, a contradiction.

Therefore, $z_1z_2$ is an edge of $G$.
Since $G-\{x, y, x', y', x_1, y_1\}$ is a graph with fewer vertices than $G$, there exists a $(0, 0, 117)$-coloring $\varphi$ for $G-\{x, y, x', y', x_1, y_1\}$. 
If $\varphi(z) \neq c$, then let
\begin{align*}
\varphi(x)&=c&
\varphi(x_1)&\in\{a, b, c\}\setminus\{\varphi(z_1), \varphi(w)\}\\
\varphi(x')&\in\{a, b, c\}\setminus\{\varphi(x_1), \varphi(w)\}&
\varphi(y_1)&\in\{a, b, c\}\setminus\{\varphi(z_2), \varphi(w)\}\\
\varphi(y')&\in\{a, b, c\}\setminus\{\varphi(w), \varphi(y_1)\}&
\varphi(y)&\in\{a, b, c\}\setminus\{\varphi(z), \varphi(y')\}
\end{align*}
to extend $\varphi$ to all of $G$, which is a contradiction. 
Hence $\varphi(z)=c$.
Since $w$ is a $120^+$-vertex, by Lemma \ref{lem:unbal3:ext-coloring2}, $\varphi(w)=c$ and $\{\varphi(z_1),\varphi(z_2)\} \subseteq \{a,b\}$.
Since $\varphi(z)=\varphi(w)=c$, Lemma \ref{lem:unbal3:ext-coloring3} implies that $\varphi(z_1)=\varphi(z_2)$.
However, $\{\varphi(z_1),\varphi(z_2)\} \subseteq \{a,b\}$ and $z_1z_2$ is an edge of $G-\{x, y, x', y', x_1, y_1\}$, so $\varphi(z_1) \neq \varphi(z_2)$, a contradiction.
\end{proof}

\begin{figure}[h]
	\begin{center}
		\includegraphics{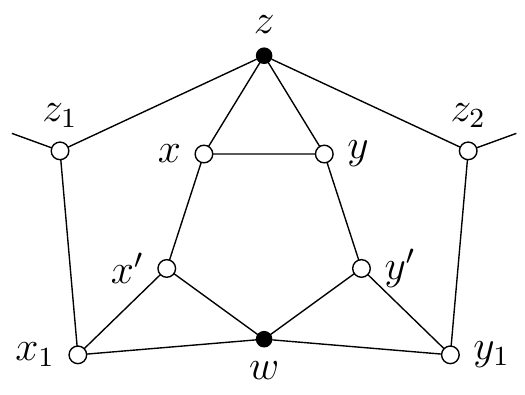}
	\end{center}
  \caption{Figure for Lemma~\ref{lem:unbal3:annoying}}
  \label{fig:unbal3:annoying}
\end{figure}

\subsection{Discharging}


We now define the initial charge at each vertex and each face.
For every $z\in V(G)\cup F(G)$, let $\mu(z)=d(z)-4$. 
The total initial charge is negative since
\begin{align*}
\sum_{z\in V(G)\cup F(G)} \mu(z)
	=\sum_{v\in V(G)} (d(v)-4)+\sum_{f\in F(G)} (d(f)-4) 
	=-4|V(G)|+4|E(G)|-4|F(G)|
	=-8
	<0.
\end{align*}
The last equality holds by Euler's formula.


Here are the discharging rules:

\begin{enumerate}[(R1)]

\item Each $5^+$-face $f$ sends charge $\frac{k_{f,v}}{r}(d(f)-4)$ to each incident 3-vertex $v$, where $r=\sum k_{f,u}$ and the sum is over all 3-vertices $u$ incident with $f$. 

\item Let $v$ be a $120^+$-vertex.

\begin{enumerate}[({R2}A)]
\item $v$ sends charge $2\over 3$ to each neighbor. 
\item $v$ sends charge $3\over 5$ to each incident $3$-face.
\end{enumerate}

\item Each vertex $v$ where $d(v)\in\{4, \ldots, 119\}$ sends charge $1\over 3$ to each incident $3$-face. 

\item Each $3$-vertex that is not incident with a $3$-face sends charge $1\over 15$ to each adjacent $3$-vertex. 
\end{enumerate}

The discharging rule (R1) shows how a face distributes its initial charge, (R2) shows how a $120^+$-vertex sends charge, (R3) shows how a vertex with degree in $\{4, \ldots, 119\}$ sends charge, and (R4) shows how a $3$-vertex that is not incident with a $3$-face sends charge to an adjacent $3$-vertex. 
See Figure~\ref{fig:unbal3-rules} for an illustration of the discharging rules. 

\begin{figure}[h]
	\begin{center}
		\includegraphics{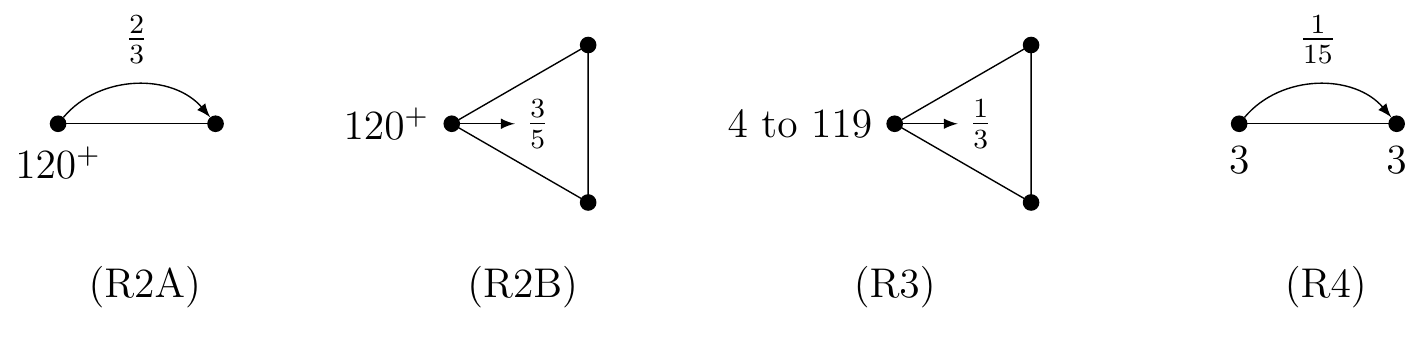}
	\end{center}
  \caption{Discharging rules}
  \label{fig:unbal3-rules}
\end{figure}

The rest of this section will prove that the sum of the final charge $\mu^*(z)$ is nonnegative for $z\in V(G)\cup F(G)$. 
Note that every $5^+$-face has nonnegative final charge since it only distributes its initial charge, which is positive. 
There are no $4$-faces since there are no $4$-cycles, and each edge is incident with at most one $3$-face since there are no $4$-cycles. 
We will first show that each $4^+$-vertex has nonnegative final charge.
Then, instead of counting $3$-vertices and $3$-faces separately, we will compute the final charge of $3$-faces and $3$-vertices together. 

\begin{claim}
Every $4^+$-vertex $v$ has nonnegative final charge. 
\end{claim}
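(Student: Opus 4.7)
The plan is to split by $d(v)$ and verify that the initial charge $d(v)-4$ together with the inflow guaranteed by Lemma~\ref{lem:unbal3:vx-degree} when $d(v)\leq 119$ dominates the outflow prescribed by (R2) and (R3). Note that a $4^+$-vertex sends no charge via (R1) or (R4). Since $G$ has no $4$-cycles, two $3$-faces incident with $v$ cannot share an edge, so $v$ is incident to at most $\lfloor d(v)/2\rfloor$ $3$-faces, as already noted in the excerpt.

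First suppose $d(v)\geq 120$. Then $v$ sends $2/3$ to each of its $d(v)$ neighbors by (R2A) and $3/5$ to each incident $3$-face by (R2B). Ignoring any inflow yields
\[
\mu^*(v) \;\geq\; (d(v)-4) - \frac{2}{3}d(v) - \frac{3}{5}\cdot\frac{d(v)}{2} \;=\; \frac{d(v)}{30}-4,
\]
which is nonnegative exactly because $d(v)\geq 120$. (Any inflow from $120^+$-neighbors only helps and can be discarded.)

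Next suppose $5\leq d(v)\leq 119$. Then $v$ only sends $1/3$ to each incident $3$-face by (R3), giving
\[
\mu^*(v) \;\geq\; (d(v)-4) - \frac{1}{3}\left\lfloor\frac{d(v)}{2}\right\rfloor \;\geq\; \frac{5d(v)}{6}-4 \;\geq\; \frac{1}{6} \;>\;0,
\]
so no inflow is needed in this range.

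The main obstacle is the case $d(v)=4$, where the initial charge vanishes and the naive bound is negative. Here $v$ is incident to at most two $3$-faces, each draining $1/3$, for a total outflow of at most $2/3$. The rescue is Lemma~\ref{lem:unbal3:vx-degree}: because $v$ is a $119^-$-vertex it has at least one $120^+$-neighbor, which sends $2/3$ back to $v$ via (R2A). This inflow of $2/3$ exactly cancels the worst-case outflow, so $\mu^*(v)\geq 0$, completing the case analysis. The only nontrivial ingredient throughout is this appeal to Lemma~\ref{lem:unbal3:vx-degree} in the $d(v)=4$ regime; all other cases reduce to a straightforward arithmetic comparison.
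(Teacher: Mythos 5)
Your proof is correct and follows essentially the same route as the paper: bound the number of incident $3$-faces by $\lfloor d(v)/2\rfloor$ using the absence of $4$-cycles, invoke Lemma~\ref{lem:unbal3:vx-degree} to secure the $2/3$ inflow for small-degree vertices, and check the arithmetic; the only cosmetic difference is that the paper applies the lemma uniformly to all $119^-$-vertices rather than isolating $d(v)=4$.
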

\begin{proof}
Note that $v$ is incident with at most $\lfloor{d(v)\over 2}\rfloor$ $3$-faces since there are no $4$-cycles. 
If $v$ is a $119^-$-vertex, then by Lemma~\ref{lem:unbal3:vx-degree}, $v$ has a neighbor $u$ that is a $120^+$-vertex. 
By (R2A), $u$ sends charge $2\over 3$ to $v$, and by (R3), $v$ sends charge at most ${1\over 3}\cdot\lfloor{d(v)\over2}\rfloor$ to its incident $3$-faces. 
Thus, $\mu^*(v)\geq d(v)-4+{2\over 3}-{1\over 3}\cdot\lfloor{d(v)\over2}\rfloor\geq 0$ when $d(v)\geq 4$. 

Now assume $v$ is a $120^+$-vertex. 
Then $v$ sends charge at most $2d(v)\over 3$ to its neighbors by (R2A) and $v$ sends charge at most ${3\over 5}\cdot{\lfloor{d(v)\over 2}\rfloor}$ to its incident $3$-faces by (R2B). 
Thus, $\mu^*(v)\geq d(v)-4-{2d(v)\over 3}-{3\over 5}\cdot{\lfloor{d(v)\over 2}\rfloor}\geq 0$ when $d(v)\geq 120$. 
\end{proof}

Note that a $6^+$-face and $5^+$-face sends charge at least $1\over 3$ and at least $1\over 5$, respectively, to each incident $3$-vertex.
In particular, a $5$-face that is incident with at least one and at least two $4^+$-vertices sends charge at least $1\over 4$ and at least $1\over 3$, respectively, to each incident $3$-vertex. 

\begin{claim}\label{claim:unbal3:3x}
Each $3$-vertex $v$ that is not incident with a $3$-face has positive final charge.
\end{claim}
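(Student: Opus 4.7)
The plan is to show $\mu^*(v)>0$ by carefully accounting for each discharging rule that affects $v$. I start from the initial charge $\mu(v)=d(v)-4=-1$, and the argument collects enough incoming charge from faces (via (R1)) and from a guaranteed high-degree neighbor (via (R2A)) to more than absorb the loss to adjacent $3$-vertices (via (R4)). No other rule touches $v$: rule (R2) is for $120^+$-vertices, rule (R3) is for $4^+$-vertices, and rule (R4) requires the incident face condition that $v$ already satisfies.

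First I will observe that every face incident with $v$ is a $5^+$-face: a $4$-face would require a $4$-cycle, which $G$ lacks, and $v$ is not incident with a $3$-face by hypothesis. For each incident face $f$, rule (R1) sends $v$ an amount $(d(f)-4)k_{f,v}/r$, where $r=\sum k_{f,u}$ is taken over $3$-vertices $u$ incident with $f$. Since $v$ is itself a $3$-vertex on $f$, $r\ge k_{f,v}\ge 1$, and clearly $r\le d(f)$; hence $f$ contributes at least $(d(f)-4)/d(f)\ge 1/5$ to $v$. Summing over the three incident faces yields at least $3/5$.

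Next, Lemma~\ref{lem:unbal3:vx-degree} forces at least one neighbor of $v$ to be a $120^+$-vertex; let $n_{120}\in\{1,2,3\}$ count them, so rule (R2A) contributes exactly $\tfrac{2}{3}n_{120}$. The only outgoing charge at $v$ is via (R4), which sends $1/15$ to each of the at most $3-n_{120}$ adjacent $3$-vertices; any charge received back through (R4) is nonnegative and can be discarded. Combining these bounds gives
$$\mu^*(v)\;\geq\;-1+\tfrac{2}{3}n_{120}+\tfrac{3}{5}-\tfrac{1}{15}(3-n_{120})\;=\;-\tfrac{3}{5}+\tfrac{11}{15}n_{120}\;\geq\;\tfrac{2}{15}>0.$$

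The step requiring the most care is the lower bound on (R1): a $5$-face bounded entirely by $3$-vertices donates only $1/5$ per incident $3$-vertex, so $v$ cannot balance its charge from faces alone. The $\tfrac{2}{3}$ contribution from the guaranteed $120^+$-neighbor is precisely what makes the computation close, and this is the reason the argument hinges on Lemma~\ref{lem:unbal3:vx-degree} rather than on any structural property of the faces surrounding $v$.
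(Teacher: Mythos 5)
Your proof is correct and follows essentially the same route as the paper's: Lemma~\ref{lem:unbal3:vx-degree} supplies a $120^+$-neighbor worth $\tfrac{2}{3}$ via (R2A), each incident $5^+$-face contributes at least $\tfrac{1}{5}$ via (R1), and the loss via (R4) is at most $\tfrac{2}{15}$, giving the same final bound $\tfrac{2}{15}>0$. Your parametrization by $n_{120}$ and the explicit justification of the $\tfrac{1}{5}$ face bound from $r\le d(f)$ are just slightly more detailed versions of the paper's one-line estimates.
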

\begin{proof}
By Lemma~\ref{lem:unbal3:vx-degree}, $v$ has a $120^+$-vertex $u$ as a neighbor. 
The faces incident with $v$ sends charge at least $3 \cdot \frac{1}{5}$ to $v$ by (R1) and $u$ sends charge $2\over 3$ to $v$ by (R2).
Also $v$ loses charge $1\over 15$ at most twice by (R4). 
Thus, $\mu^*(v)\geq -1+{3\over 5}+{2\over 3}-{2\over 15}>0$. 
\end{proof}

\begin{claim}\label{claim:unbal3:33}
If $f$ is a $3$-face that is incident with three $3$-vertices $x, y, z$, then the sum of the final charge of $f, x, y, z$ is nonnegative. 
\end{claim}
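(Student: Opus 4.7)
The plan is to enumerate every discharging transfer that touches $\{f,x,y,z\}$ and verify that their cumulative initial deficit of $-1 + 3 \cdot (-1) = -4$ is wiped out.

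First I would identify the structure around $f$. Since $x, y, z$ are pairwise adjacent and each has degree exactly $3$, each has a unique neighbor outside $\{x, y, z\}$; call these $x', y', z'$. Lemma~\ref{lem:unbal3:vx-degree} forces each of $x', y', z'$ to be a $120^+$-vertex, since the only other neighbors of $x$ (for instance) are the $3$-vertices $y$ and $z$. Moreover $x', y', z'$ are pairwise distinct: if, say, $x' = y'$, then $xyx'$ is a $3$-face sharing edge $xy$ with $f$, which produces the $4$-cycle $xzyx'$ in $G$, contradicting our hypothesis.

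Next I would analyze the faces incident with $x, y, z$. At each $v \in \{x, y, z\}$, besides $f$ there are two \emph{outer faces} corresponding to the two non-$f$ corners at $v$. Since $G$ has no $4$-cycles, (i) no two $3$-faces share an edge and (ii) there is no $4$-face; hence every outer face is a $5^+$-face. Focus on the outer face $f_{xy}$ sharing edge $xy$ with $f$: its boundary walk contains the consecutive segment $x'{-}x{-}y{-}y'$, and since $x' \neq y'$ are both $120^+$-vertices (not $3$-vertices), the sum $r = \sum k_{f_{xy}, u}$ over $3$-vertices $u$ on $f_{xy}$ is at most $d(f_{xy}) - 2$. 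Thus rule (R1) sends
\[
\frac{k_{f_{xy},x}}{r}\bigl(d(f_{xy})-4\bigr) \geq \frac{d(f_{xy})-4}{d(f_{xy})-2} \geq \frac{1}{3}
\]
to $x$, and symmetrically at least $1/3$ to $y$. The same bound gives charge $\geq 1/3$ from $f_{yz}$ to each of $y, z$ and from $f_{xz}$ to each of $x, z$. In the degenerate case where the two outer faces at a single vertex $v$ coincide because $vv'$ is a bridge, the multiplicity $k \geq 2$ of $v$ on the combined face compensates and preserves the lower bound of $2/3$ per vertex.

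Finally I would sum the charges. Each of $x, y, z$ receives $2/3$ from its unique $120^+$-neighbor via (R2A) and at least $1/3$ from each of its two outer faces via (R1). No rule removes charge from $x, y, z$: each is incident with the $3$-face $f$, so they cannot send via (R4), and their only $3$-vertex neighbors are the other two members of $\{x, y, z\}$, who are likewise barred by being incident with $f$. The face $f$ neither sends nor receives charge, because its boundary contains no $4^+$-vertex (so (R2B) and (R3) do not apply) and (R1) is triggered only by $5^+$-faces. Therefore
\[
\mu^*(f) + \mu^*(x) + \mu^*(y) + \mu^*(z) \geq -1 + 3\left(-1 + \tfrac{2}{3} + 2 \cdot \tfrac{1}{3}\right) = 0,
\]
as required. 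The main subtlety is confirming the $5^+$-face structure of the outer faces and correctly bounding $r$; both reduce to the distinctness of $x', y', z'$, which itself follows directly from the absence of $4$-cycles.
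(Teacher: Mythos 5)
Your proof is correct and follows essentially the same route as the paper's: identify the three distinct $120^+$-neighbors $x',y',z'$, observe that each outer face is a $5^+$-face incident with two of them and hence sends at least $\tfrac13$ to each incident $3$-vertex by (R1), and add the $\tfrac23$ from (R2A) to reach $-4+3\cdot\tfrac23+3\cdot\tfrac23=0$. The only difference is that you spell out the bound $\frac{d(f')-4}{d(f')-2}\ge\frac13$ explicitly, whereas the paper records it as a standing remark before the claims.
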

\begin{proof}
Let $x', y', z'$ be the neighbor of $x, y, z$, respectively, that is not on $f$. 
Since there are no $4$-cycles, $x',y',z'$ are pairwise distinct. 
By Lemma~\ref{lem:unbal3:vx-degree}, $x', y', z'$ are all $120^+$-vertices. 
Since $x,y,z$ are 3-vertices, $xx',yy',zz'$ are not contained in $3$-faces. 
Therefore each face that is adjacent to $f$ is incident with at least two $120^+$-vertices. 
Thus, each of $x, y, z$ receives charge at least $2\over 3$ from the incident faces by (R1).
Now $x', y', z'$ sends charge ${2\over 3}$ to $x, y, z$, respectively, by (R2A). 
Thus, $\mu^*(f)+\mu^*(x)+\mu^*(y)+\mu^*(z)\geq -4+3\cdot{2\over 3}+3\cdot{2\over 3}=0$.
\end{proof}

\begin{claim}\label{claim:unbal3:32}
If $f$ is a $3$-face $xyz$ that is incident with exactly two $3$-vertices $x$ and $y$, then the sum of the final charge of $f, x, y$ is nonnegative. 
\end{claim}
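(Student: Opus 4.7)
My plan is to establish $\mu^*(f)+\mu^*(x)+\mu^*(y)\ge 0$ by tracking all charge flowing into $\{f,x,y\}$ and comparing against the initial deficit of $-3$. First I will note that $f,x,y$ are pure charge sinks: $f$ is a $3$-face and thus never sends charge, and $x,y$ are $3$-vertices incident with the $3$-face $f$, so (R4) is inactive at them. Letting $x',y'$ be the third neighbors of $x,y$ and $f_{xy},f_{xz},f_{yz}$ be the faces other than $f$ sharing edges $xy,xz,yz$ with $f$, the absence of $4$-cycles makes $f_{xy},f_{xz},f_{yz}$ pairwise distinct $5^+$-faces. I will split on $d(z)$.

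In the case $4\le d(z)\le 119$, (R3) sends $1/3$ from $z$ to $f$, and I will use Lemma~\ref{lem:unbal3:vx-degree} at $x$ and $y$ to force $x',y'$ to be $120^+$-vertices, each sending $2/3$ via (R2A). I will then observe that any $5^+$-face $f'$ incident with at least two $4^+$-vertices sends at least $\frac{d(f')-4}{d(f')-2}\ge \frac{1}{3}$ to each incident $3$-vertex by (R1), with minimum at $d(f')=5$. Since $f_{xy}$ contains the $120^+$-vertices $x',y'$ and each of $f_{xz},f_{yz}$ contains at least two $4^+$-vertices ($z$ paired with $x'$ or $y'$), the three faces together will contribute at least $\frac{2}{3}+\frac{1}{3}+\frac{1}{3}=\frac{4}{3}$ to $\{x,y\}$, giving a grand total of $\frac{1}{3}+\frac{4}{3}+\frac{4}{3}=3$.

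In the case $d(z)\ge 120$, rules (R2A), (R2B) give $z$ sending $3/5$ to $f$ and $2/3$ to each of $x,y$, a subtotal of $29/15$, so I will need another $16/15$. I plan to dispose quickly of the subcases where at least one of $x',y'$ is either $120^+$ or even a $4^+$ non-$120^+$-vertex: in each, the extra weight coming via (R2A) or via the face estimate above comfortably exceeds $16/15$. The crux is the subcase in which $x',y'$ are both $3$-vertices and the worst possible configuration is realized, namely $f_{xy}$ is a $5$-face on five $3$-vertices $x',x,y,y',p$, so that $f_{xy}$ sends only $2/5$ to $\{x,y\}$. Here my key move will be embedding-geometric: since $x'$ is a $3$-vertex with $3$-vertex face-neighbors $x,p$ on $f_{xy}$, Lemma~\ref{lem:unbal3:vx-degree} forces the third neighbor $v$ of $x'$ to be $120^+$. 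The three faces at $x'$ correspond to the three cyclic sectors of the edges $\{x'x,x'p,x'v\}$; $f_{xy}$ occupies the sector between $x'x$ and $x'p$, so $f_{xz}$, the other face at edge $xx'$, will have to be the sector between $x'v$ and $x'x$, putting $v$ on $f_{xz}$. That gives $f_{xz}$ two $120^+$-vertices $z,v$ and hence at least $1/3$ to $x$, with a symmetric argument giving $\ge 1/3$ from $f_{yz}$ to $y$. The sum $\frac{29}{15}+\frac{2}{5}+\frac{1}{3}+\frac{1}{3}=3$ then closes the case exactly.

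The main obstacle will be precisely this last tight configuration, where the inequality is saturated; correctly locating the $120^+$-neighbor of $x'$ on $f_{xz}$ (and symmetrically the $120^+$-neighbor of $y'$ on $f_{yz}$) requires the cyclic-order bookkeeping at $x'$ and $y'$ described above, without which the needed $1/3+1/3$ contribution from $f_{xz},f_{yz}$ would be missing.
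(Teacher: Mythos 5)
Your overall accounting framework, the case split on $d(z)$, and the bound of $\tfrac{d(f')-4}{d(f')-2}\ge\tfrac13$ for a $5^+$-face with two $4^+$-vertices all match the paper, and your cyclic-order argument locating the $120^+$-neighbor of $x'$ on $f_{xz}$ when that neighbor is not on $f_{xy}$ is sound and is exactly the (implicit) reasoning the paper uses. The case $4\le d(z)\le 119$ and the subcases where some $x'$ or $y'$ is a $4^+$-vertex are handled correctly.

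However, there is a genuine gap in the case $d(z)\ge 120$ with $x',y'$ both $3$-vertices: the configuration you declare to be the worst (all five vertices of $f_{xy}$ of degree $3$) is not the worst one. Consider instead $f_{xy}=x'xyy'p$ where $p$ is a $120^+$-vertex that is simultaneously the unique $120^+$-neighbor of $x'$ and of $y'$ (i.e.\ $x''=y''=p$ in the paper's notation). Then $f_{xy}$ sends only $\tfrac14$ to each of $x$ and $y$, and — crucially — the third neighbors of $x'$ and $y'$ need not be $120^+$-vertices, so your argument placing a second $120^+$-vertex on $f_{xz}$ and on $f_{yz}$ breaks down: each of those faces may be an annoying $5$-face contributing only $\tfrac14$. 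The total income is then $\tfrac{29}{15}+\tfrac12+\tfrac14+\tfrac14=\tfrac{44}{15}$, which is $\tfrac1{15}$ short of $3$. This deficit cannot be closed by charge from $z$ and the three adjacent faces alone; the paper closes it by invoking rule (R4) (which you dismissed as inactive — it is inactive as an \emph{expense} at $x,y$, but it is needed as \emph{income} from $x'$ or $y'$) together with the reducibility Lemma~\ref{lem:unbal3:annoying}, which guarantees that at least one of $x',y'$ is incident with no $3$-face and hence donates the missing $\tfrac1{15}$. Without that structural lemma and rule (R4), this configuration is not handled, so the proof as proposed is incomplete.
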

\begin{proof}
Let $x'$ and $y'$ be the neighbor of $x$ and $y$, respectively, that is not on $f$. 
Note that $x'$ and $y'$ are distinct, and $xx'$ and $yy'$ are not contained in any $3$-faces since $x$ and $y$ are 3-vertices and $G$ has no $4$-cycles. 

Assume $z$ is not a $120^+$-vertex.
This implies that $x'$ and $y'$ are both $120^+$-vertices by Lemma~\ref{lem:unbal3:vx-degree}.
Therefore each face that is adjacent to $f$ is incident with at least two $4^+$-vertices. 
Thus, each of $x$ and $y$ receives charge at least $2\over 3$ from the incident faces by (R1).
Now $x'$ and $y'$ sends charge ${2\over 3}$ to $x$ and $y$, respectively, by (R2A).
Also $z$ sends charge $1\over 3$ to $f$ by (R3). 
Thus, $\mu^*(f)+\mu^*(x)+\mu^*(y)\geq -3+2\cdot{2\over 3}+2\cdot{2\over 3}+{1\over 3}=0$.

Assume $z$ is a $120^+$-vertex.
This implies that $x, y, f$ receives charge $2\over 3$, $2\over 3$, at least $3\over 5$, from $z$ by (R2A), (R2A), (R2B), respectively; note that the sum of these charge is ${29\over 15}$. 
Let $f_{xy}, f_{zx}, f_{zy}$ be the face incident with $xy, zx, zy$, respectively, that is not $f$. 
It is possible that $f_{xy},f_{zy}$, and $f_{zx}$ are not pairwise distinct.
Assume that one of $x',y'$ is a $4^+$-vertex. 
Without loss of generality, we may assume that $x'$ is a $4^+$-vertex. 
By (R1), $f_{zx}$ and $f_{xy}$ gives charge at least ${1\over 3}$ and at least ${1\over 2}$ to $x$ and $x, y$, respectively. 
Also, $f_{zy}$ gives charge ${1\over 4}$ to $y$ by (R1). 
Thus, $\mu^*(f)+\mu^*(x)+\mu^*(y)\geq -3+{29\over 15}+{1\over 3}+{1\over 2}+{1\over 4}>0$.
So we may assume both $x'$ and $y'$ are $3$-vertices. 
By Lemma~\ref{lem:unbal3:vx-degree}, $x'$ and $y'$ must have a neighbor $x''$ and $y''$, respectively, that is a $120^+$-vertex. 
Note that $x''=y''$ is possible. 

If none of $x''$ and $y''$ is incident with $f_{xy}$, then $f_{xy}$ sends charge at least ${2\over 5}$ to $x$ and $y$ by (R1), and each $f_{zx}$ and $f_{zy}$ sends charge at least ${1\over 3}$ to $x$ and $y$, respectively, by (R1). 
Thus, $\mu^*(f)+\mu^*(x)+\mu^*(y)\geq -3+{29\over 15}+{2\over 5}+2\cdot{1\over 3}=0$.
If exactly one of $x''$ and $y''$ is incident with $f_{xy}$, then without loss of generality, we may assume $x''$ is incident with $f_{zx}$ and $y''$ is incident with $f_{xy}$. 
Now, by (R1), $f_{xy}$ sends charge at least $1\over 4$ to each of $x$ and $y$, and $f_{zx}$ and $f_{zy}$ sends charge at least $1\over 3$ and at least $1\over 4$ to $x$ and $y$, respectively. 
Thus, $\mu^*(f)+\mu^*(x)+\mu^*(y)\geq -3+{29\over 15}+{1\over 3}+3\cdot{1\over 4}>0$.

Assume both $x''$ and $y''$ are incident with $f_{xy}$.
If $x''\neq y''$, then $d(f_{xy}) \geq 6$ and $f_{xy}$ sends charge at least ${1\over 3}$ to each of $x$ and $y$ by (R1), and $f_{zx}$ and $f_{zy}$ sends charge at least ${1\over 4}$ to $x$ and $y$, respectively, by (R1). 
Thus, $\mu^*(f)+\mu^*(x)+\mu^*(y)\geq -3+{29\over 15}+{2\over 3}+2\cdot{1\over 4}>0$.
Now consider the case when $x''=y''$, so $f_{xy}$ sends charge $1\over 4$ to each of $x$ and $y$ by (R1). 
If one of $f_{zy}$ and $f_{zx}$ is either a $6^+$-face or a $5$-face that is not annoying, then it sends charge at least $1\over 3$ to $y$ or $x$ by (R1) and the other face still sends charge to $x$ or $y$ at least $1\over 4$ by (R1). 
Thus, $\mu^*(f)+\mu^*(x)+\mu^*(y)\geq -3+{29\over 15}+{1\over 3}+3\cdot{1\over 4}>0$.
So assume each of $f_{zx}$ and $f_{zy}$ is an annoying $5$-face, which sends charge $1\over 4$ by (R1). 
In particular, $x'$ and $y'$ have degree 3.
Therefore, $x$ and $y$ receive a total of charge $1$ by the surrounding faces. 

If $f'$ is a 3-face incident with $x'$, then it is incident with $x',x''$, and a vertex on $f_{zx}$ other than $z$.
Since $f_{zx}$ is an annoying 5-face, $f'$ is an annoying 3-face.
So every 3-face incident with $x'$ is annoying.
Similarly, every 3-face incident with $y'$ is annoying.
Since $xyy'x''x'$ is an annoying 5-face and $xyz$ is an annoying 3-face, either one of $x',y'$ is not incident with any 3-face, or some 3-face incident with both $x',y'$, by Lemma \ref{lem:unbal3:annoying}.
The later implies that $x'$ is adjacent to $y'$, which is a contradiction since $x'y'yx$ is now a $4$-cycle. 
Hence one of $x',y'$ is not incident with any 3-face, and that vertex sends charge $1\over 15$ to either $x$ or $y$ by (R4).
Thus, $\mu^*(f)+\mu^*(x)+\mu^*(y)\geq -3+{29\over 15}+1+{1\over 15}= 0$.
\end{proof}

\begin{claim}\label{claim:unbal3:31}
If $f$ is a $3$-face $xyz$ that is incident with exactly one $3$-vertex  $x$, then the sum of the final charge of $f$ and $x$ is nonnegative. 
\end{claim}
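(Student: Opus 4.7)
My plan is to analyze $\mu^*(f) + \mu^*(x)$ by splitting into three cases according to how many of $y, z$ are $120^+$-vertices. First I would set up notation: let $x'$ denote the third neighbor of $x$ (other than $y, z$), and let $f_1, f_2$ denote the two faces incident with $x$ other than $f$, where $f_1$ contains edge $xz$ and $f_2$ contains edge $xy$ (possibly coinciding if $xx'$ is a bridge). Since $G$ has no $4$-cycles, no $3$-face shares an edge with $f$, so both $f_1$ and $f_2$ are $5^+$-faces; and since each already contains a $4^+$-vertex (namely $z$ and $y$ respectively), rule (R1) sends at least $1/4$ from each of $f_1, f_2$ to $x$ as a baseline.

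Then I would proceed case by case. When both $y$ and $z$ are $120^+$-vertices, $f$ receives $3/5$ from each by (R2B) and $x$ receives $2/3$ from each by (R2A), so $\mu^*(f) \geq 1/5$ and $\mu^*(x) \geq 1/3$ individually. When exactly one of $y, z$, say $y$, is $120^+$, I compute $\mu^*(f) \geq -1 + 3/5 + 1/3 = -1/15$ via (R2B) and (R3), while $\mu^*(x) \geq -1 + 2/3 + 1/2 = 1/6$ via (R2A) and the baseline $1/4 + 1/4$ from $f_1, f_2$; summing gives $1/10 > 0$. When neither $y$ nor $z$ is $120^+$, Lemma~\ref{lem:unbal3:vx-degree} forces $x'$ to be a $120^+$-vertex, which simultaneously provides a $2/3$ contribution to $x$ via (R2A) and upgrades each of $f_1, f_2$ to contain two non-$3$-vertices ($x'$ together with $z$ or $y$), so their contributions via (R1) rise to at least $1/3$ each. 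This yields $\mu^*(x) \geq -1 + 2/3 + 2/3 = 1/3$ and $\mu^*(f) \geq -1 + 2/3 = -1/3$, giving a combined sum of exactly $0$.

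The main obstacle is this last (tight) case: the bound works only because $x'$ plays a dual role, providing direct charge to $x$ via (R2A) and indirectly boosting the face contributions of $f_1$ and $f_2$ by reducing the number of $3$-vertex incidences each face must divide its charge among. A minor technicality to address is the possibility that $xx'$ is a bridge, so $f_1 = f_2$; this does not affect the bookkeeping because (R1) distributes charge proportionally to the angular multiplicity $k_{f, x}$, which becomes $2$ in that case, yielding the same total contribution to $x$. Since the claim only asserts a nonnegative sum rather than strict positivity, the exact balance in Case~3 is not a concern.
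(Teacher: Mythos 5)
Your proof is correct and follows essentially the same approach as the paper: both arguments locate the $120^+$-neighbor of $x$ guaranteed by Lemma~\ref{lem:unbal3:vx-degree}, use the fact that the two $5^+$-faces at $x$ other than $f$ each contain one (or, when the $120^+$-neighbor is the third neighbor $x'$, two) $4^+$-vertices to invoke the $\frac14$ and $\frac13$ bounds from (R1), and close with the same charge arithmetic. The only difference is cosmetic --- you split into three cases by how many of $y,z$ are $120^+$-vertices, while the paper splits into two by whether the $120^+$-neighbor lies in $\{y,z\}$.
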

\begin{proof}
By Lemma~\ref{lem:unbal3:vx-degree}, $x$ has a neighbor $x'$ that is a $120^+$-vertex. 

Assume $x'\not\in\{y, z\}$. 
The sum of charge received from the faces incident with $x$ is at least $2 \cdot \frac{1}{3}$ by (R1).
Also, $x'$ sends charge $2\over 3$ to $x$ by (R2A). 
Each of $y$ and $z$ sends charge at least $1\over 3$ to $f$ by either (R2B) or (R3). 
Thus, $\mu^*(f)+\mu^*(x)\geq -2+4\cdot{1\over 3}+{2\over 3}= 0$.

So we may assume $x' \in \{y,z\}$.
Without loss of generality, assume $x'=y$. 
The sum of charge received from the faces incident with $x$ is at least $2 \cdot \frac{1}{4}$ by (R1).
Now $x'$ sends charge $2\over 3$ and $3\over 5$ to $x$ and $f$ by (R2A) and (R2B), respectively. 
Also, $z$ sends charge at least $1\over 3$ to $f$ by either (R2B) or (R3). 
Thus, $\mu^*(f)+\mu^*(x)\geq -2+2\cdot{1\over 4}+{2\over 3}+{3\over 5}+{1\over 3}>0$.
\end{proof}

\begin{claim}\label{claim:unbal3:30}
If $f$ is a $3$-face $xyz$ that is incident with no $3$-vertices, then the final charge of $f$ is nonnegative. 
\end{claim}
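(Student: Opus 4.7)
The plan is straightforward because this is the easiest case among the $3$-face subclaims. The initial charge of $f$ is $\mu(f)=d(f)-4=-1$, and the discharging rules show that $f$ does not send out any charge (rule (R1) only applies to $5^+$-faces). So it suffices to show that $f$ receives a total charge of at least $1$ from its three incident vertices.

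First I would note that, since $G$ has no $2^-$-vertices and by hypothesis none of $x,y,z$ is a $3$-vertex, each of $x,y,z$ is a $4^+$-vertex. Then I would go through the two relevant rules that govern how a $4^+$-vertex charges a $3$-face: if $v$ is a $120^+$-vertex incident with $f$, then $v$ sends charge $\tfrac{3}{5}$ to $f$ by (R2B); and if $v$ has $d(v)\in\{4,\ldots,119\}$, then $v$ sends charge $\tfrac{1}{3}$ to $f$ by (R3). In either case, each of $x,y,z$ contributes at least $\tfrac{1}{3}$ to $f$.

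Therefore $\mu^*(f)\geq -1+3\cdot\tfrac{1}{3}=0$, which finishes the claim. There is no real obstacle here; the only thing to verify is that no other rule (R1) or (R4) redirects charge away from $f$, which is immediate since $f$ is a $3$-face and not a $3$-vertex.
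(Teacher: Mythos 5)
Your proof is correct and matches the paper's argument essentially verbatim: each of $x,y,z$ is a $4^+$-vertex and hence sends at least $\tfrac{1}{3}$ to $f$ by (R2B) or (R3), giving $\mu^*(f)\geq -1+3\cdot\tfrac{1}{3}=0$. The extra remarks about $f$ not sending charge are a harmless sanity check.
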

\begin{proof}
Since each of $x, y, z$ is a $4^+$-vertex, each of $x, y, z$ sends charge at least $1\over 3$ to $f$ by either (R2B) or (R3). 
Thus, $\mu^*(f)\geq -1+3\cdot{1\over 3}=0$.
\end{proof}

Since no $3$-vertex is contained in two different $3$-faces, the sum of the final charge on all $3$-faces and all $3$-vertices is nonnegative by Claims~\ref{claim:unbal3:3x}, ~\ref{claim:unbal3:33}, ~\ref{claim:unbal3:32}, ~\ref{claim:unbal3:31}, and~\ref{claim:unbal3:30}.


\section*{Acknowledgment}

We thank the referees for carefully reading the manuscript and helpful suggestions. 


%

%

\end{document}